\newtheorem*{rep@theorem}{\rep@title}
\newcommand{\newreptheorem}[2]{%
\newenvironment{rep#1}[1]{%
 \def\rep@title{#2 \ref{##1}}%
 \begin{rep@theorem}}%
 {\end{rep@theorem}}}
\newcolumntype{M}{>{\vphantom{\large HAP}$}l<{$}}
\theoremstyle{plain}
\newtheorem{thm}{Theorem}[section]
\newtheorem{prop}[thm]{Proposition}
\newtheorem{lem}[thm]{Lemma}
\newtheorem{cor}[thm]{Corollary}
\theoremstyle{definition}
\newtheorem{rem}[thm]{Remark}
\newtheorem{exam}[thm]{Example}
\theoremstyle{plain}
\newtheorem{defn}[thm]{Definition}
\numberwithin{equation}{section}
\numberwithin{table}{section} 
\numberwithin{figure}{section}
\newcommand{\NN}{\mathbb{N}}
\newcommand{\ZZ}{\mathbb{Z}}
\newcommand{\QQ}{\mathbb{Q}}
\newcommand{\CC}{\mathbb{C}}
\newcommand{\FF}{\mathbb{F}}
\newcommand{\kk}{{K}}
\newcommand{\p}{\mathbf{p}}
\newcommand{\bb}{\mathbf{b}}
\newcommand{\q}{\mathbf{q}}
\newcommand{\rr}{\mathbf{r}}
\newcommand{\s}{\mathbf{s}}
\newcommand{\ee}{\mathbf{e}}
\newcommand{\vv}{\mathbf{v}}
\newcommand{\uu}{\mathbf{u}}
\newcommand{\w}{\mathbf{w}}
\newcommand{\ww}{\mathbf{w}}
\newcommand{\zz}{\mathbf{z}}
\newcommand{\pp}{\mathfrak{p}}
\newcommand{\OO}{\mathcal{O}_\nu}
\newcommand{\univ}{ \text{univ}}
\newcommand{\RU}{ \mathcal{R}}
\DeclareMathOperator{\Frac}{Frac}
\newcommand{\remove}[1]{}
\newcommand{\st}{: \, }
\newcommand{\Bas}{\mathscr{B}}
\newcommand{\Omegabar}{\overline{\Omega}}
\newcommand{\eps}{\varepsilon}
\newcommand{\Psihat}{\hat{\Psi}}
\newcommand{\epshat}{\hat{\varepsilon}}
\begin{document}

\title{On Symmetries of Elliptic Nets and Valuations of Net Polynomials}

\author{Amir Akbary, Jeff Bleaney,  and Soroosh Yazdani}

\thanks{Research of the authors is partially supported by NSERC}

\date{\today}

\keywords{\noindent elliptic divisibility sequences, division polynomials, elliptic nets, net polynomials}

\subjclass[2010]{11G05, 11G07, 11B37.}

\address{Department of Mathematics and Computer Science \\
        University of Lethbridge \\
        Lethbridge, AB T1K 3M4 \\
        Canada}
\email{amir.akbary@uleth.ca}


\email{jeff.bleaney@uleth.ca}

\email{syazdani@gmail.com}

\begin{abstract} 
Under certain conditions, we prove that the set of zeros of an elliptic net forms an Abelian group.
    We present two applications of this fact. Firstly we give a
    generalization of a theorem of Ayad on valuations of division polynomials in
    the context of net polynomials. Secondly we generalize a theorem of Ward on
    symmetry of elliptic divisibility sequences to the case of elliptic nets.
\end{abstract}


\maketitle

\vspace{-.75cm}
\tableofcontents
\vspace{-.75cm}

\section{Introduction}

Let $E$ be an elliptic curve defined over a field $K$ with the Weierstrass model $f(x, y)=0$, where 
\begin{equation}
    \label{WE}
    f(x,y):=y^2+a_1 xy+a_3 y- x^3-a_2 x^2-a_4 x-a_6;
    ~~~a_i\in K.
\end{equation}
It is known that there are polynomials $\phi_n,~\psi_n$, and $\omega_n\in
K[x, y]/\langle f(x,y) \rangle$ such that
for any $P \in E(K)$, the group of $K$-rational points of $E$, we have
\begin{equation}
    \label{eqn DivPoly}
    nP=\left(\frac{\phi_n(P)}{\psi_n^2 (P)}, \frac{\omega_n(P)}{\psi_n^3(P)} \right).
\end{equation}
Moreover, $\psi_n$ satisfies the recursion 
\begin{equation}
    \label{dpr}
    \psi_{m+n}\psi_{m-n} =  \psi_{m+1}\psi_{m-1}\psi_{n}^{2} - \psi_{n+1}\psi_{n-1}\psi_{m}^{2},
\end{equation}
with initial conditions
\begin{align*}
    &\psi_1=1,~\psi_2=2y+a_1 x+a_3,~ \psi_3=3x^4+b_2 x^3+3b_4 x^2+3 b_6 x+b_8, \\
    &\psi_4 =\psi_2 \cdot \left(2x^6+b_2 x^5+5b_4 x^4+10 b_6 x^3+(b_2b_8-b_4b_6)x+(b_4b_8-b_6^2)\right).
\end{align*}
Here
\begin{align*}
&b_2=a_1^2+4a_2,~b_4=2a_4+a_1 a_3,~b_6=a_3^2+4a_6,\\
&b_8=a_1^2 a_6+4a_2 a_6-a_1 a_3 a_4+a_2 a_3^2-a_4^2.
\end{align*}
The polynomial $\psi_n$ is called the \emph{$n$-th division polynomial} associated to $E$. 
(See \cite[Chapter 2]{Lang} for the basic properties of division polynomials.) 

Now let $K$ be a field with a discrete valuation $\nu$, let
$\OO = \{x \in K \st \nu(x) \geq 0\}$ and $\pp = \{x \in K \st \nu(x)>0 \}.$
In \cite[Theorem A]{Ayad}, Ayad proved the following theorem on the
valuation of $\psi_n(P)$. 
\begin{thm}[{\bf Ayad}] 
    \label{ayad}
    Let $E/K$ be an elliptic curve defined by the polynomial \eqref{WE} with
    $a_i \in \OO$ for $i=1, 2, 3, 4, 6$. Let $P\in E(K)$ be a point in $E(K)$ such
    that $P \not\equiv \infty \pmod \pp$. Then the following
    assertions are equivalent: 
    \begin{enumerate}[(a)]
        \item  $\nu(\psi_2(P))$ and $\nu(\psi_3(P))>0$.
        \item For all integers $n\geq 2$, we have $\nu(\psi_n(P))>0$.
        \item There exists an integer $n_0\geq 2$ such that
            $\nu(\psi_{n_0}(P))$ and $\nu(\psi_{n_0+1}(P))>0.$
        \item There exists an integer $m_0\geq 2$ such that
            $\nu(\psi_{m_0} (P))$ and 
            $\nu(\phi_{m_0} (P))>0.$  
        \item Reduction of $P$ modulo $\pp$ is singular.
    \end{enumerate} 
\end{thm}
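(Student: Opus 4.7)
The first step is to transport all five conditions to the residue field. Because $P\not\equiv\infty\pmod{\pp}$ and $a_i\in\OO$, the affine coordinates of $P$ lie in $\OO$ and hence so do $\psi_n(P)$ and $\phi_n(P)$ for every $n$; each of (a)--(d) becomes the vanishing of the corresponding reduction $\tilde\psi_n(\tilde P)$ or $\tilde\phi_n(\tilde P)$ at the finite point $\tilde P\in\tilde E(\kappa)$, where $\kappa=\OO/\pp$ and $\tilde E$ is the reduced Weierstrass curve, while (e) asks that $\tilde P$ be a singular point of $\tilde E$.

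The equivalences (a)$\,\Leftrightarrow\,$(b)$\,\Leftrightarrow\,$(c) are essentially pure recursion. From \eqref{dpr}, setting $m=n+1$ yields
\[
\psi_{2n+1}=\psi_{n+2}\psi_n^3-\psi_{n-1}\psi_{n+1}^3,
\]
and a shift gives $\psi_2\psi_{2n}=\psi_n(\psi_{n+2}\psi_{n-1}^2-\psi_{n-2}\psi_{n+1}^2)$. Combined with the factorisation $\psi_4=\psi_2\cdot(\cdots)$ in the excerpt, an induction propagates the vanishing of $\tilde\psi_2(\tilde P)$ and $\tilde\psi_3(\tilde P)$ to all $\tilde\psi_n(\tilde P)$ with $n\geq 2$, giving (a)$\,\Rightarrow\,$(b); (b) trivially implies (a) and (c); and for (c)$\,\Rightarrow\,$(a) the same identities read in reverse pull the vanishing at $(n_0,n_0+1)$ back to indices $(2,3)$.

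The heart of the argument is (a)$\,\Leftrightarrow\,$(e). The observation $\psi_2=\partial f/\partial y$ identifies the vanishing of $\tilde\psi_2(\tilde P)$ with the vanishing of $\partial f/\partial y$ at $\tilde P$; in characteristic $\neq 2$ this forces $\tilde y=-(a_1\tilde x+a_3)/2$, and substituting into $\tilde f(\tilde P)=0$ reduces the situation to $g(\tilde x)=0$ with $g(x):=4x^3+b_2 x^2+2b_4 x+b_6$. In these variables, $\tilde P$ is singular precisely when $g'(\tilde x)=0$. The remaining equivalence $\tilde\psi_3(\tilde P)=0\Leftrightarrow g'(\tilde x)=0$ (given $g(\tilde x)=0$) reduces $\psi_3$ modulo $g$ to an explicit quadratic $R(x)$; one then checks by a direct computation that $R$ and $g'$ cut out the same zeros modulo $g$, an ideal-theoretic identity linking $\psi_3$, $g$, $g'$ and the elliptic discriminant. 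Establishing this identity uniformly across the full Weierstrass form, with the residue characteristics $2$ and $3$ handled separately, is the main technical obstacle.

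Finally, for (a)$\,\Leftrightarrow\,$(d), the standard identity $\phi_n=x\psi_n^2-\psi_{n-1}\psi_{n+1}$ at $n=2$ gives $\tilde\phi_2(\tilde P)=0$ under (a), so $m_0=2$ suffices. Conversely, assume (d) but suppose for contradiction that $\tilde P$ is smooth. Then $\tilde P$ lies in the one-dimensional algebraic group $\tilde E_{\mathrm{ns}}$ and $\mathcal{O}_{\tilde E,\tilde P}$ is a DVR; combining the identity $\tilde\phi_{m_0}/\tilde\psi_{m_0}^2=\tilde x\circ[m_0]$ with the fact that $\tilde x$ has a double pole at $\tilde\infty$ forces $\tilde\phi_{m_0}(\tilde P)\neq 0$ whenever $\tilde\psi_{m_0}(\tilde P)=0$, contradicting (d). Hence $\tilde P$ must be singular, closing the cycle of equivalences.
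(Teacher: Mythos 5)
The paper does not prove this theorem at all: it is quoted verbatim from Ayad \cite[Theorem A]{Ayad}, so there is no internal proof to compare against. Judged on its own terms, your outline contains genuine gaps at the two places where the real content of the theorem lives. First, your argument for $(c)\Rightarrow(a)$ does not work: the identities $\psi_{2n+1}=\psi_{n+2}\psi_n^3-\psi_{n-1}\psi_{n+1}^3$ and $\psi_2\psi_{2n}=\psi_n(\psi_{n+2}\psi_{n-1}^2-\psi_{n-2}\psi_{n+1}^2)$ express a high-index term as a $\ZZ$-combination of products of lower-index terms, so vanishing propagates \emph{upward} only. From $\tilde\psi_{n_0}(\tilde P)=\tilde\psi_{n_0+1}(\tilde P)=0$ they yield relations among products such as $\tilde\psi_{n_0+2}\tilde\psi_{n_0-2}=0$, never the vanishing of an individual low-index term; "reading them in reverse" is not an argument. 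The standard route (and the one this paper generalizes via Theorems \ref{second-theorem} and \ref{Ward6.2}) is structural: if $\tilde P$ is nonsingular one shows $\tilde\psi_n(\tilde P)=0$ iff $n\tilde P=\tilde O$ in the group $\tilde E_{\mathrm{ns}}(\kappa)$, whence two consecutive vanishing indices force $\tilde P=\tilde O$, contradicting $P\not\equiv\infty\pmod\pp$; this gives $(c)\Rightarrow(e)$ by contraposition. Equivalently one invokes Ward's rank-of-apparition theory (two consecutive zeros with $W_1\ne 0$ mean the zero set is not a subgroup, hence no unique rank of apparition).

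Second, for $(a)\Leftrightarrow(e)$ you correctly reduce to showing that, given $f(\tilde P)=\partial f/\partial y(\tilde P)=0$, one has $\tilde\psi_3(\tilde P)=0$ iff $\partial f/\partial x(\tilde P)=0$, but you then declare the required identity "the main technical obstacle" and leave it, together with residue characteristics $2$ and $3$, unestablished; since this is the pivot of the whole cycle of implications, the proof is incomplete there. Two smaller points: in the induction $(a)\Rightarrow(b)$ the even step uses $\psi_2\psi_{2n}=\cdots$, which is vacuous once $\tilde\psi_2(\tilde P)=0$; you need the divisibility $\psi_m\mid\psi_{2m}$ (or to route $(a)\Rightarrow(e)\Rightarrow(b)$ through the singularity of $\tilde P$). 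And in $(d)\Rightarrow(e)$ the claim that $\tilde\psi_{m_0}(\tilde P)=0$ forces $\tilde\phi_{m_0}(\tilde P)\neq 0$ at a nonsingular $\tilde P$ needs the statement that $\phi_n$ and $\psi_n^2$ have no common zero on the nonsingular locus (e.g.\ via their resultant being a power of $\Delta$), not just that $x$ has a double pole at infinity.
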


An important ingredient of the proof of the above theorem is the recursion
\eqref{dpr}. Generally, any solution over an arbitrary integral domain $R$ of the recursion
\begin{equation}
    \label{eds1} 
    W_{m+n}W_{m-n} W_{1}^{2}=  W_{m+1}W_{m-1}W_{n}^{2} - W_{n+1}W_{n-1}W_{m}^{2}, 
\end{equation} 
where $m, n \in \mathbb{Z}$, is called an \emph{elliptic sequence}.
Hence the sequence $(\psi_n(P))$ is an example of an elliptic sequence. The
theory of elliptic sequences was developed  by Morgan Ward in 1948.  An
\emph{elliptic divisibility sequence} (EDS) is an integer elliptic sequence
$(W_{n})$, which is also a divisibility sequence (i.e. $W_m\mid W_n$ if $m\mid n$). 

Theorem \ref{ayad} has an immediate application to elliptic denominator
sequences, which we will define now. Let $E/\mathbb{Q}$ be an
elliptic curve defined by \eqref{WE}, with $a_{i} \in \ZZ$ for $i = 1,2,3,4,6$,
and let $P\in E(\mathbb{Q})$ be a non-torsion point. It is known that
$$P=\left(\frac{A_P}{D_P^2}, \frac{B_P}{D_P^3} \right)$$ 
with $\gcd (A_P, D_P)= \gcd(B_P, D_P)=1$ and $D_P\geq 1$ (see \cite[Proposition 7.3.1]{Cohen}). 
Let $(D_{nP})$ be the sequence of denominators of the multiples of $P$. 
More precisely $D_{nP}$ is given by the identity 
\begin{equation}
    \label{eqn EllDen}
    nP=\left(\frac{A_{nP}}{D_{nP}^2}, \frac{B_{nP}}{D_{nP}^3} \right) 
\end{equation}
with $\gcd(A_{nP}, D_{nP})= \gcd(B_{nP}, D_{nP})=1$ and $D_{nP}\geq 1$.  One
can show that $(D_{nP})$ is a divisibility sequence. Some authors call this
sequence an elliptic divisibility sequence. In this paper, in order to
distinguish this sequence from the classical elliptic divisibility sequences
studied by Ward, we call the sequence $(D_{nP})$ the \emph{elliptic denominator
sequence} associated to the elliptic curve $E$ and the point $P$.

Comparing equations \eqref{eqn EllDen} and \eqref{eqn DivPoly} we expect a close
relation between $\psi_n(P)$ and $D_{nP}$. In particular, for any prime $p$ we have that
\begin{equation}
    \label{eqn padicvalue}
\nu_p(x(nP))= \nu_p(A_{nP})-2\nu_p(D_{nP}) = \nu_p(\phi_n(P))-2\nu_p(\psi_n(P)), 
\end{equation}
where $\nu_p$ is the $p$-adic valuation on $\QQ$ and
$x(nP)$ is the $x$ coordinate of $nP$.

From construction of division polynomials we know that if $p\nmid D_p$ then $\nu_p(\psi_n(P))\geq 0$ and $\nu_p(\phi_n(P))\geq 0$. Now Theorem \ref{ayad} tells us that if $P$ reduces to a non-singular
point and if $P$ modulo $p$ is different from $\infty$ (i.e. $p \nmid D_P$), then $\nu_p(\psi_n(P))\nu_p(\phi_n(P))=0$. Under these conditions if $\nu_p(x(nP))\geq 0$ then
by \eqref{eqn padicvalue} and the fact that
$A_{nP}$ and $D_{nP}$ are coprime to each other, we have
$\nu_p(D_{nP})=\nu_p(\psi_n(P))=0$. Similarly, if $\nu_p(x(nP))<0$ then
$\nu_p(D_{nP})=\nu_p(\psi_n(P))=-{1\over 2}\nu_p(x(nP))$.


Therefore, we have the following proposition.

\begin{prop} 
    \label{first-proposition} 
    Let $E/\QQ$ be an elliptic curve over the rationals given by equation \eqref{WE},
    and assume that $a_i \in \ZZ$.  Furthermore, let $P \in E(\QQ)$ be a point of
    infinite order such that $P \not\equiv \infty \pmod p$ and let $(D_{nP})$ be the elliptic denominator
    sequence associated to $E$ and $P$.    
Then for a prime $p$ if $P \pmod p$ is non-singular, we have 
    $$\nu_p(D_{nP})=\nu_p({\psi}_n(P)).$$
    
\end{prop}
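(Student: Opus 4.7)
The plan is to formalize the informal argument sketched in the paragraph immediately preceding the statement. The two main ingredients are the identity
\[
\nu_p(x(nP)) = \nu_p(\phi_n(P)) - 2\nu_p(\psi_n(P)) = \nu_p(A_{nP}) - 2\nu_p(D_{nP})
\]
from \eqref{eqn padicvalue}, and Ayad's theorem (Theorem \ref{ayad}), specialized to $K = \QQ$ and $\nu = \nu_p$, under which the hypothesis $a_i \in \ZZ$ gives $a_i \in \OO = \ZZ_{(p)}$.

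First I would verify $p$-integrality: since $a_i \in \ZZ$ and $P \not\equiv \infty \pmod p$ (i.e.\ $p \nmid D_P$), the polynomial values $\psi_n(P)$ and $\phi_n(P)$ are $p$-adic integers, so $\nu_p(\psi_n(P)), \nu_p(\phi_n(P)) \geq 0$. Next, apply the contrapositive of the equivalence (d) $\Leftrightarrow$ (e) in Theorem \ref{ayad}: since the reduction of $P$ is non-singular modulo $p$, there is no $m_0 \geq 2$ with both $\nu_p(\psi_{m_0}(P)) > 0$ and $\nu_p(\phi_{m_0}(P)) > 0$. Hence for every $n \geq 2$,
\[
\nu_p(\psi_n(P)) \cdot \nu_p(\phi_n(P)) = 0.
\]
The case $n = 1$ is trivial because $\psi_1 = 1$ and $\nu_p(D_P) = 0$ by hypothesis.

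Then I would split into two cases according to the sign of $\nu_p(x(nP))$. If $\nu_p(x(nP)) \geq 0$, then since $\gcd(A_{nP}, D_{nP}) = 1$, we must have $\nu_p(D_{nP}) = 0$; assuming $\nu_p(\psi_n(P)) > 0$ would force $\nu_p(\phi_n(P)) = 0$ by the product vanishing, yielding $\nu_p(x(nP)) = -2\nu_p(\psi_n(P)) < 0$, a contradiction. Thus $\nu_p(\psi_n(P)) = 0 = \nu_p(D_{nP})$. If $\nu_p(x(nP)) < 0$, then coprimality forces $\nu_p(A_{nP}) = 0$, giving $\nu_p(D_{nP}) = -\tfrac{1}{2}\nu_p(x(nP)) > 0$; now assuming $\nu_p(\phi_n(P)) > 0$ would force $\nu_p(\psi_n(P)) = 0$, hence $\nu_p(x(nP)) = \nu_p(\phi_n(P)) > 0$, a contradiction. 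So $\nu_p(\phi_n(P)) = 0$ and $\nu_p(\psi_n(P)) = -\tfrac{1}{2}\nu_p(x(nP)) = \nu_p(D_{nP})$.

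The argument has no serious obstacle: all the substantive work is already carried by Ayad's theorem. The only points requiring a little care are the verification that Ayad applies ($a_i \in \ZZ_{(p)}$ and $P \not\equiv \infty \pmod p$) and the separate treatment of the trivial case $n = 1$.
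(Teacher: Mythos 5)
Your proposal is correct and is essentially the paper's own argument: the paper proves this proposition in the paragraph preceding its statement by exactly the same route — $p$-integrality of $\psi_n(P)$ and $\phi_n(P)$, the equivalence (d) $\Leftrightarrow$ (e) of Theorem \ref{ayad} to get $\nu_p(\psi_n(P))\nu_p(\phi_n(P))=0$, and the case split on the sign of $\nu_p(x(nP))$ combined with $\gcd(A_{nP},D_{nP})=1$. Your write-up merely makes the contradiction arguments in each case explicit and handles $n=1$ separately, which is a harmless refinement.
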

\begin{rem}\label{mark}
(a)  
One can drop the condition $P \not\equiv \infty \pmod p$ in the previous proposition and prove a stronger result for an scaled version of $\psi_n(P)$. Let $$\hat{\psi}_n(P):=D_P^{n^2} {\psi}_n(P).$$ Then if $P \pmod p$ is non-singular for all primes  $p$, we have
    $$D_{nP}=|{\hat\psi}_n(P)|.$$(See \cite{Ayad} ). For a  proof of this fact (in more general case of elliptic nets) see Proposition \ref{second-proposition}.

\noindent (b) Formulas for explicit valuations of $\psi_n(P)$ at primes $p$ (of good or bad reduction) are given in \cite{Stange3}. Also in \cite{SS} the sign of $\psi_n(P)$ is computed
explicitly.
\end{rem}

 In \cite{Stange}, Stange generalized the concept of an 
elliptic sequence to an
$n$-dimensional array, called an elliptic net. In this paper we give a generalization of Ayad's theorem for net polynomials.
\begin{defn} 
    Let $A$ be a free Abelian group of finite rank, and $R$ be an
    integral domain. Let ${\mathbf 0}$ and $0$ be the additive identity elements of $A$ and $R$ respectively. An {elliptic net} is any map $W:A \rightarrow R$ for which
    $W({\mathbf 0}) = 0$, and that satisfies 
    \begin{multline} \label{net recurrence}
        W(\p+\q+\s)W(\p-\q)W(\rr+\s)W(\rr) \\ + W(\q+\rr+\s)W(\q-\rr)W(\p+\s)W(\p) \\ +
        W(\rr+\p+\s)W(\rr-\p)W(\q+\s)W(\q) = 0, 
    \end{multline} 
    for all $\p,\q,\rr,\s \in A.$ We identify the {rank} of $W$ with the rank of $A$.  
\end{defn}

Note that if $A=\ZZ$ and $W:A\rightarrow R$ is an elliptic net, then by setting $\p=m$, $\q=n$,
$\rr=1$, and $\s={ 0}$ in \eqref{net recurrence}, and noting that $W$ is an odd function, we get that $W(n)$ satisfies equation
\eqref{eds1}, hence $(W(n))$ is an elliptic sequence. Therefore elliptic nets are a generalization
of elliptic sequences.

We can relate elliptic nets to elliptic curves in the following way.
For an arbitrary field $K$, let
$$ S=K[x_1,y_1,\cdots, x_r, y_r], $$ 
and 
consider the polynomial ring
$$
    \mathcal{R}_r=
    K[x_i, y_i]_{1\leq i \leq r}[(x_i-x_j)^{-1}]_{1\leq i<j\leq r}/\langle f(x_i, y_i)\rangle _{1\leq i \leq r},
$$ 
where $f$ is the defining polynomial \eqref{WE} for $E$.
Let $\mathbf{P}=(P_1,P_2,\ldots,P_r) \in E(K)^r$ and 
$\mathbf{v} = (v_{1}, v_{2}, \dots, v_{r}) \in
\ZZ^{r}$. From \cite[Section 4]{Stange} follows that there exist ``polynomials"
$\Psi_{\mathbf{v}}, \Phi_{\mathbf{v}}, \Omegabar_{\mathbf{v}}\in \mathcal{R}_r$
such that $\Psi_\vv$ (as a function of $\vv \in \ZZ^r$) is an elliptic net and
\begin{equation} 
    \label{netp} 
    \mathbf{v} \cdot \mathbf{P}=v_{1}P_{1} + v_{2}P_{2} + \dots + v_{r}P_{r} =
    \Big(\frac{\Phi_{\mathbf{v}}(\mathbf{P})}{\Psi^{2}_{\mathbf{v}}(\mathbf{P})},
    \frac{\Omegabar_{\mathbf{v}}(\mathbf{P})}{\Psi^{3}_{\mathbf{v}}(\mathbf{P})}\Big).
\end{equation} 
The ``polynomial'' $\Psi_{\mathbf{v}}$ is called the \emph{$\mathbf{v}$-th net polynomial} 
associated to $E$. Also, the function $\vv \mapsto \Psi_{\vv}(\mathbf{P})$ is called
\emph{the elliptic net} associated to $E$ and $\mathbf{P}$. In \cite{Stange},
Stange also proves that when $r>1$, then 
we can compute $\Psi_\vv$ using the recurrence relation
\eqref{net recurrence} and the initial values
$\Psi_\vv$ for $\vv=\ee_i$, $\vv=2\ee_i$, $\vv=\ee_i + \ee_j$ and $\vv=2\ee_i+\ee_j$, where $\{\ee_1,\ee_2,\ldots,\ee_r\}$
is the standard basis for $\ZZ^r$. (For $r=1$ the recurrence \eqref{dpr} shows that $\psi_n$ is uniquely determined by $\psi_1$, $\psi_2$, $\psi_3$, and $\psi_4$.)
Note that the initial values of $\Psi_\vv$ are defined as follows:
\begin{align} 
    \begin{aligned}
    \label{Psi init}
    &\Psi_{\mathbf{e_{i}}}= 1,~ 
    \Psi_{\mathbf{2e_{i}}}= 2y_{i} + a_{1}x_{i} + a_{3},~ 
    \Psi_{\mathbf{e_{i}+e_{j}}} = 1,\\
    &\Psi_{2\mathbf{e_{i}}+\mathbf{e_{j}}} = 
    2x_{i} + x_{j} -\Big(\frac{y_{j}-y_{i}}{x_{j}-x_{i}}\Big)^{2} -a_{1} \Big(
    \frac{y_{j}-y_{i}}{x_{j}-x_{i}} \Big) +a_{2}.
\end{aligned}
\end{align} 
The above initial conditions define the $\vv$-th net polynomials of rank $r>1$ for any
elliptic curves completely.  We refer the reader to Theorem 2.5, Lemma 2.6, and Theorem 2.8 
of \cite{Stange} for the details of how this can be done.

In this paper, we prove the following generalization of Theorem \ref{ayad} for net polynomials.  
Let $K$, $\nu$, $\OO$, and $\pp$ be defined as before.
\begin{thm} \label{first-theorem} 
    Let $E/K$ be an elliptic curve
    defined by the polynomial \eqref{WE} with $a_i \in \OO$ for
    $i=1, 2, 3, 4, 6$. 
    Let $\mathbf{P} = (P_{1}, P_{2}, \dots, P_{r}) \in E(K)^{r}$ be such that
    $P_{i} \not\equiv \infty \pmod \pp$, for $1\leq i \leq r$, and
    $P_i\pm P_j \not\equiv \infty \pmod \pp$, for $1\leq i <j \leq r$.
    Then the following are equivalent:
    \begin{enumerate}[(a)] 
        \item \label{property 1} 
            There exists $1 \leq i \leq r$, such that 
            $$
                \nu(\Psi_{2\ee_{i}}(\mathbf{P})) > 0 ~~~~ {\rm and} 
            ~~~~ \nu(\Psi_{3\ee_{i}}(\mathbf{P})) > 0.
            $$ 
        \item \label{property  2}
            There exists $1 \leq i \leq r$ such that for all $n \geq 2$ we have 
            $$\nu(\Psi_{n\ee_{i}}(\mathbf{P})) > 0.$$ 
        \item \label{property 3} 
            There exists $\mathbf{v} \in \ZZ^{r}$ and $1 \leq i \leq r$ such that 
            $$
            \nu(\Psi_{\vv}(\mathbf{P})) > 0 ~~~~ {\rm and}
            ~~~~ \nu(\Psi_{\vv+\ee_{i}}(\mathbf{P})) > 0.$$ 
        \item \label{property 4} 
            There exists $\vv \in \ZZ^{r}$ such that 
            $$
            \nu(\Psi_{\mathbf{v}}(\mathbf{P})) > 0 ~~~~ {\rm and} ~~~~
            \nu(\Phi_{\mathbf{v}}(\mathbf{P})) > 0.
            $$ 
        \item \label{property 5}
            There exists $1 \leq i \leq r$ such that $P_{i} \pmod {\pp}$ is singular.
    \end{enumerate} 
\end{thm}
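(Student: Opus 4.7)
The plan is to split the equivalence into one-dimensional slices, where Ayad's theorem applies directly, and the genuinely multidimensional content, which reduces to the paper's main structural result on the zero set of an elliptic net. The key initial observation is that for $\vv = n\ee_i$, both $\Psi_{n\ee_i}$ and $\Phi_{n\ee_i}$ depend only on the $i$-th pair of variables and coincide with the classical division polynomials $\psi_n$, $\phi_n$ of $E$ (they satisfy the same recursion and initial data). In particular $\Psi_{n\ee_i}(\mathbf{P}) = \psi_n(P_i)$ and $\Phi_{n\ee_i}(\mathbf{P}) = \phi_n(P_i)$. Applying Theorem~\ref{ayad} to the point $P_i$ then immediately yields the chain (a) $\Leftrightarrow$ (b) $\Leftrightarrow$ (e), together with (a) $\Rightarrow$ (c) (taking $\vv = 2\ee_i$, so that $\vv + \ee_i = 3\ee_i$) and (a) $\Rightarrow$ (d) (taking $\vv = m_0\ee_i$ for the integer $m_0$ delivered by Ayad's condition (d) applied to $P_i$).

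To close the loop it suffices to prove (d) $\Rightarrow$ (c) and (c) $\Rightarrow$ (e). For the first I would invoke the natural net-polynomial analog of the standard identity $\phi_n = x\psi_n^2 - \psi_{n-1}\psi_{n+1}$, namely
\[
    \Phi_\vv = x_i \Psi_\vv^2 - \Psi_{\vv+\ee_i}\Psi_{\vv-\ee_i},
\]
valid in $\mathcal{R}_r$ for each $1 \leq i \leq r$; this follows by writing $\vv\cdot\mathbf{P} = (\vv - \ee_i)\cdot\mathbf{P} + P_i$ and applying the chord-and-tangent formula for the $x$-coordinate. Assuming (d), the hypothesis $P_i \not\equiv \infty \pmod \pp$ gives $\nu(x_i) \geq 0$, so $\nu(x_i \Psi_\vv^2) > 0$; together with $\nu(\Phi_\vv) > 0$ this forces $\nu(\Psi_{\vv+\ee_i}\Psi_{\vv-\ee_i}) > 0$, so at least one of $\Psi_{\vv\pm\ee_i}$ has positive valuation, and (c) follows (with $\vv$ replaced by $\vv - \ee_i$ in the former case).

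For (c) $\Rightarrow$ (e) I would argue by contrapositive. Assume every $\tilde P_i$ is nonsingular. The hypotheses $P_i \not\equiv \infty$ and $P_i \pm P_j \not\equiv \infty \pmod \pp$ guarantee that $\nu(x_i),\nu(y_i) \geq 0$ and that each $x_i - x_j$ is a unit in $\mathcal{O}_\nu$, so the initial values in \eqref{Psi init} lie in $\mathcal{O}_\nu$; by the recurrence \eqref{net recurrence} the entire net $\vv \mapsto \Psi_\vv(\mathbf{P})$ takes values in $\mathcal{O}_\nu$, and reduction modulo $\pp$ produces a well-defined elliptic net $\overline\Psi \colon \ZZ^r \to \mathcal{O}_\nu/\pp$. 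The paper's main structural theorem then asserts that the zero set $Z = \{\vv : \overline\Psi(\vv) = 0\}$ is a subgroup of $\ZZ^r$. Since $\Psi_{\ee_i} = 1$, we have $\ee_i \notin Z$, so $Z$ cannot contain both $\vv$ and $\vv + \ee_i$, contradicting (c).

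The hard part is the structural theorem itself: one has to pin down the precise "certain conditions" under which zeros of an elliptic net form a subgroup, and verify that the nondegeneracy hypotheses in Theorem~\ref{first-theorem} translate exactly into those conditions for $\overline\Psi$, in particular ensuring that the reduced initial values $\overline{\Psi_{\ee_i}}$, $\overline{\Psi_{\ee_i+\ee_j}}$ remain nonzero in the residue field, which is what drives the final contradiction $\ee_i \in Z$.
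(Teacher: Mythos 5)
Your overall architecture is essentially the paper's: Ayad's theorem handles the diagonal slices and gives $(a)\Leftrightarrow(b)\Leftrightarrow(e)$, the identity $\Phi_\vv = x_i\Psi_\vv^2 - \Psi_{\vv+\ee_i}\Psi_{\vv-\ee_i}$ (Lemma \ref{numerator formula}, derived from Stange's relation $\Psi_\vv^2\Psi_\uu^2(X_\vv-X_\uu)=-\Psi_{\vv+\uu}\Psi_{\vv-\uu}$ with $\uu=\ee_i$) gives $(c)\Leftrightarrow(d)$, and the subgroup structure of the zero set of the reduced net drives $(c)\Rightarrow(e)$. But there is a genuine gap at exactly the point you flag as ``the hard part,'' and the ingredient you are missing is identifiable. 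Theorem \ref{second-theorem} does not assert that the zero set of an arbitrary elliptic net is a subgroup; its hypothesis is that the reduced net has a \emph{unique rank of apparition with respect to the standard basis}, i.e.\ that each diagonal sequence $(\overline{\Psi_{n\ee_i}(\mathbf{P})})_n=(\overline{\psi_n(P_i)})_n$ has a unique rank of apparition. Your guess that the relevant nondegeneracy is the nonvanishing of $\overline{\Psi_{\ee_i}}$ and $\overline{\Psi_{\ee_i+\ee_j}}$ is not the right condition. The missing bridge is Theorem \ref{Ward6.2} (Ward): an elliptic sequence fails to have a unique rank of apparition if and only if $W_3=W_4=0$. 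Combined with Ayad's equivalence of (c) and (e) (two consecutive vanishing $\psi_n(P_i)$ modulo $\pp$ iff $P_i$ reduces to a singular point), this is precisely what converts your contrapositive hypothesis ``every $P_i$ is nonsingular mod $\pp$'' into the hypothesis of Theorem \ref{second-theorem}. The paper runs the same logic in the forward direction: since $\overline{\Psi_{\ee_i}}=1\neq 0$, condition (c) forces the zero set not to be a subgroup; Theorem \ref{second-theorem} then says some diagonal sequence lacks a unique rank of apparition; Ward gives $\overline{\Psi_{3\ee_i}}=\overline{\Psi_{4\ee_i}}=0$; Ayad gives that $P_i$ is singular. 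Without Ward's criterion your argument does not close.

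A secondary, smaller issue: your justification that $\Psi_\vv(\mathbf{P})\in\OO$ for all $\vv$ --- ``the initial values lie in $\OO$, so by the recurrence the whole net does'' --- does not work as stated, because solving \eqref{net recurrence} for an unknown term requires dividing by other terms of the net, which need not be units. Integrality is Proposition \ref{valuation-prop}, proved by specializing the universal net polynomials $\Psi_\vv^{\univ}$, which lie in $\OO[x_i,y_i][(x_i-x_j)^{-1}]$ once one checks $\nu(x(P_i)-x(P_j))=0$ from the hypotheses on $\mathbf{P}$. You need this integrality both to conclude in $(d)\Rightarrow(c)$ that one factor of $\Psi_{\vv+\ee_i}\Psi_{\vv-\ee_i}$ has positive valuation, and to make the reduction of the net modulo $\pp$ well defined in $(c)\Rightarrow(e)$.
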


To prove this, we first need to show that
$\nu(\Psi_{\vv}(\mathbf{P})) \geq 0$ in the cases we are dealing with. This result is of independent interest, so we record it in the following proposition.

\begin{prop} \label{valuation-prop}
    Let $E/K$ be an elliptic curve defined by the polynomial \eqref{WE}
    with $a_{i} \in \OO$ for $i = 1,2,3,4,6$, and let $\mathbf{P} =
    (P_{1}, P_{2}, \dots, P_{r}) \in E(K)^{r}$.  
    When $r=1$, assume that $P_1 \not \equiv \infty \pmod \pp$.
    When $r>1$, then assume that for all $1 \leq i<j \leq r$ we have
    $P_i \not \equiv \infty \pmod \pp$ and
    $P_i \pm P_j \not \equiv \infty \pmod \pp$.
    Then for all $\vv \in \ZZ^r$ we have
    \[ \nu(\Psi_\vv(\mathbf{P})) \geq 0, \]
    hence $\Psi_\vv(\mathbf{P}) \in \OO$.
\end{prop}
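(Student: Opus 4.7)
I would reduce the proposition to two ingredients: (i) under the stated hypotheses, each generator of the universal ring $\mathcal{R}_r$ evaluates into $\OO$ at $\mathbf{P}$, and (ii) every net polynomial $\Psi_\vv$ admits a representative in $\mathcal{R}_r$ whose coefficients lie in $\ZZ[a_1,\ldots,a_6]$. Together these immediately give $\Psi_\vv(\mathbf{P}) \in \OO$.

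For (i), the assumption $a_j \in \OO$ handles the Weierstrass coefficients, and $P_i \not\equiv \infty \pmod \pp$ gives $x_i, y_i \in \OO$. For $r>1$ I would establish $\nu(x_i - x_j) = 0$ for all $i\neq j$ as follows: the hypothesis $P_i \pm P_j \not\equiv \infty \pmod \pp$ says exactly that $\bar P_i \neq \pm \bar P_j$ in the reduced curve, but two distinct affine points on a Weierstrass cubic that share an $x$-coordinate must be each other's additive inverses, so $\bar x_i \neq \bar x_j$ and hence $(x_i - x_j)^{-1}\in \OO$. In particular the slope $(y_j-y_i)/(x_j-x_i)$ lies in $\OO$, so all four families of initial values \eqref{Psi init} evaluate into $\OO$; this is the only spot where the hypothesis on $P_i\pm P_j$ is used.

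For (ii), this is the classical fact that $\psi_n \in \ZZ[a_1,\ldots,a_6][x,y]/\langle f\rangle$ when $r=1$, and for $r>1$ it is built into Stange's construction in \cite[\S 4]{Stange}, which shows that each $\Psi_\vv$ admits a representative in $\ZZ[a_1,\ldots,a_6][x_i,y_i]_{1\leq i\leq r}[(x_i-x_j)^{-1}]_{1\leq i<j\leq r}$ modulo $\langle f(x_i,y_i)\rangle$. This integrality is the main obstacle: although \eqref{net recurrence} formally determines $\Psi_\vv$ from the initial values, solving it for a new term in general requires division by a previously computed quantity, which a priori could introduce denominators beyond those already present in $\mathcal{R}_r$. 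That no such extra denominators arise is proved by Stange by realizing the net polynomials as suitable normalizations of Weierstrass $\sigma$-function expressions; it is not a formal consequence of the recurrence alone. Once (ii) is granted, evaluating a $\ZZ[a_1,\ldots,a_6]$-integral representative at $\mathbf{P}$ produces an $\OO$-combination of products of elements of $\OO$, and the result follows.
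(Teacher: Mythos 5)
Your proposal is correct and follows essentially the same route as the paper: both arguments rest on Stange's integrality result that $\Psi_\vv$ lies in $\OO[x_i,y_i][(x_i-x_j)^{-1}]/\langle f(x_i,y_i)\rangle$ (via the universal ring over $\ZZ[\alpha_1,\ldots,\alpha_6]$), combined with $\nu(x(P_i)),\nu(y(P_i))\geq 0$ and $\nu(x(P_i)-x(P_j))=0$ under the stated reduction hypotheses. Your identification of where each hypothesis enters, and of the universal integrality as the genuinely nontrivial ingredient, matches the paper's proof.
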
 

Next we specialize to the case that $E$ is defined over $\QQ$. Let
$E/\mathbb{Q}$ be an elliptic curve, and let ${\mathbf{P}}=(P_1,P_2,\dots,P_r) \in E(\QQ)^r$
be $r$ linearly independent points in $E(\QQ)$.
For $\mathbf{v}=(v_1,v_2,\cdots,v_r) \in \ZZ^r$, let
${\mathbf{v}}\cdot \mathbf{P}=v_1P_1+\cdots+v_r P_r$. 
We denote the \emph{elliptic denominator net} associated to $E$ and $P$ by $(D_{\mathbf{v}\cdot \mathbf{P}})$, where
$D_{\mathbf{v}\cdot \mathbf{P}}$ is the denominator of $\mathbf{v} \cdot \mathbf{P}$.
More precisely,
\begin{equation}
\label{Denom}
    \mathbf{v} \cdot \mathbf{P}= v_1P_1+v_2P_2+\dots+v_rP_r=
    \left(\frac{A_{\mathbf{v} \cdot
    \mathbf{P}}}{D_{\mathbf{v} \cdot \mathbf{P}}^2}, \frac{B_{\mathbf{v} \cdot
    \mathbf{P}}}{D_{\mathbf{v} \cdot  \mathbf{P}}^3}  \right).
\end{equation}
We are interested
in the relation between the element $D_{\mathbf{v}
\cdot \mathbf{P}}$ of the elliptic denominator net, and the value of
the $\mathbf{v}$-th net polynomial
$\Psi_{\mathbf{v}}$ at $\mathbf{P}$. 
An immediate corollary of Theorem \ref{first-theorem} is that for
all but finitely many primes $p$ we have 
\[ \nu_p(D_{\vv \cdot \mathbf{P}}) = \nu_p(\Psi_\vv(\mathbf{P})), \]
where $\nu_p$ is the $p$-adic valuation. 
We extend this result, however similar to  Remark  \ref{mark} (a), we need to multiply $\Psi_{\mathbf{v}}$ at $\mathbf{P}$ with a quadratic form to obtain an equivalent net polynomial ${\hat{\Psi}}_{\mathbf{v}}$. 
More precisely, by using notation \eqref{Denom}, let
\begin{equation}
\label{FVP}
F_\vv(\mathbf{P}) = \prod_{1 \leq i \leq j \leq r} A_{ij}^{v_iv_j}, 
\end{equation}
where 
$$ A_{ii}=D_{\ee_i \cdot \mathbf{P}}=D_{P_i},~~{\rm and}~~ A_{ij}=\frac{D_{P_i+P_j}}{D_{P_i} D_{P_j}}~~{\rm for}~~i\neq j.$$
Then $F(\mathbf{P}):\ZZ^r \rightarrow K^\times$ defined by
$\vv \mapsto F_\vv(\mathbf{P})$ is a quadratic form.
Define
\[ \Psihat_\vv(\mathbf{P}) = F_\vv(\mathbf{P})\Psi_\vv(\mathbf{P}), \]
for all $\vv \in \ZZ^r$.
Then $\Psihat(\mathbf{P})$ is an elliptic net that is scale equivalent
to $\Psi(\mathbf{P})$ (see Section \ref{sec2} for more explanation).
Furthermore, notice that
\[ \Psihat_{\ee_i}(\mathbf{P})=F_{\ee_i}(\mathbf{P})\Psi_{\ee_i}(\mathbf{P})=A_{ii}=D_{\ee_i \cdot \mathbf{P}}, \]
and
\[ \Psihat_{\ee_i+\ee_j}(\mathbf{P})=F_{\ee_i+\ee_j}(\mathbf{P})\Psi_{\ee_i+\ee_j}
(\mathbf{P})=A_{ii}A_{jj}A_{ij}=D_{P_i+P_j}=D_{(\ee_i+\ee_j) \cdot \mathbf{P}}. \]

We will prove the following generalization of Proposition \ref{first-proposition}.
\begin{prop} \label{second-proposition}
    Let $E/\QQ$ be an elliptic net defined by polynomial \eqref{WE}
    with  $a_i\in \ZZ$ for $i=1, 2, 3, 4, 6$. Let
    $\mathbf{P}=(P_1,\ldots,P_r) \in E(\QQ)^r$ be an $r$-tuple consisting of $r$ linearly independent points in $E(\QQ)$.
    Let $p$ be a prime so that $P_i \pmod p$ is non-singular for $1\leq i \leq r$.
    Then
    \[ 
        \nu_p(D_{\vv \cdot \mathbf{P}})=\nu_p(\Psihat_\vv(\mathbf{P})),
    \]
    for all $\vv \in \ZZ^r$.
    In particular, if for all primes $p$ and all integers $1 \leq i \leq r$
    we have that $P_i \pmod p$ is nonsingular, then
    \[ 
        D_{\vv \cdot \mathbf{P}} = |\Psihat_\vv(\mathbf{P})|. 
    \]

\end{prop}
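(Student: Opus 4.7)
The plan is to establish the local identity $\nu_p(D_{\vv\cdot\mathbf{P}}) = \nu_p(\Psihat_\vv(\mathbf{P}))$ at every prime $p$ at which all $P_i \pmod p$ are non-singular. The ``in particular'' conclusion will then follow immediately, since the rational ratio $\Psihat_\vv(\mathbf{P})/D_{\vv\cdot\mathbf{P}}$ must then have trivial valuation at every prime and hence equal $\pm 1$.

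At a fixed prime $p$, I will split into two cases. In the \emph{easy case}, where additionally no $P_i$ and no $P_i\pm P_j$ reduces to $\infty \pmod p$, each $D_{P_i}$ and $D_{P_i+P_j}$ is a $p$-adic unit, so $\nu_p(F_\vv(\mathbf{P})) = 0$, and it suffices to show $\nu_p(\Psi_\vv(\mathbf{P})) = \nu_p(D_{\vv\cdot\mathbf{P}})$. This is essentially the same valuation chase used to derive Proposition \ref{first-proposition} from Theorem \ref{ayad}: Theorem \ref{first-theorem}(e) fails by hypothesis, hence so does (d), forcing $\min(\nu_p(\Psi_\vv),\nu_p(\Phi_\vv)) = 0$ for every $\vv$; Proposition \ref{valuation-prop} gives $\nu_p(\Psi_\vv) \geq 0$; and the $p$-integrality of $\mathbf{P}$ gives $\nu_p(\Phi_\vv) \geq 0$. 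Combining these with
\[ \nu_p(x(\vv\cdot\mathbf{P})) = \nu_p(\Phi_\vv) - 2\nu_p(\Psi_\vv) = \nu_p(A_{\vv\cdot\mathbf{P}}) - 2\nu_p(D_{\vv\cdot\mathbf{P}}) \]
and $\gcd(A_{\vv\cdot\mathbf{P}}, D_{\vv\cdot\mathbf{P}}) = 1$ yields the required equality, split by whether $\vv\cdot\mathbf{P}\equiv\infty\pmod p$ or not.

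In the \emph{hard case}, when some $P_i$ or $P_i\pm P_j$ reduces to $\infty \pmod p$, I will reduce to the easy case by a change of Weierstrass model over $\QQ_p$. One picks an appropriate substitution of the form $(x,y)\mapsto(u^2X + r, u^3Y + su^2X + t)$ with $u\in\QQ_p^\times$ and $r,s,t\in\ZZ_p$ so that on the new integral model $E'/\ZZ_p$ every transformed point $P'_i$ and every pairwise sum/difference $P'_i\pm P'_j$ is $p$-integral and lies on the smooth locus of $E'\pmod p$; the existence of such a substitution rests on each $P_i$ lying in the identity component of the N\'eron model of $E$ at $p$. Applying the easy case to $(E',\mathbf{P}')$ and pulling back via the quadratic-form scaling $\Psi'_\vv(\mathbf{P}') = u^{q(\vv)}\Psi_\vv(\mathbf{P})$ of net polynomials under change of model (from Stange's theory), together with the explicit transformation of the $A_{ij}$'s entering $F_\vv$ and of $D_{\vv\cdot\mathbf{P}}$ itself, will recover the local identity on the original curve. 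The main obstacle is this final bookkeeping: the defining formula $F_\vv = \prod_{i\leq j}A_{ij}^{v_iv_j}$ is calibrated precisely so that $\Psihat_\vv$ transforms under change of model by the same power of $u$ as $D_{\vv\cdot\mathbf{P}}$, and verifying this cancellation for all $\vv$ (and in particular confirming one can simultaneously push every $P'_i$ and $P'_i\pm P'_j$ onto the smooth reduction locus) is the technical crux of the proof.
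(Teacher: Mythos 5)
Your ``easy case'' is essentially sound: when no $P_i$ and no $P_i\pm P_j$ reduces to $\infty$ modulo $p$, the valuation chase through Theorem \ref{first-theorem}, Proposition \ref{valuation-prop} and Lemma \ref{numerator formula} does yield $\nu_p(\Psi_\vv(\mathbf{P}))=\nu_p(D_{\vv\cdot\mathbf{P}})$, and $\nu_p(F_\vv(\mathbf{P}))=0$ there. The gap is in your ``hard case'', and it is fatal as stated: the proposed reduction by a change of Weierstrass model over $\QQ_p$ cannot be carried out. On any integral Weierstrass model, the set of points reducing to the smooth locus of the special fibre is a subgroup on which reduction is a group homomorphism, and the identity of that group is always the point at infinity $[0:1:0]$. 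Hence if $P_i$ and $P_j$ have non-singular reduction with $\overline{P_i}=-\overline{P_j}$ (so $P_i+P_j\equiv\infty\pmod{p}$), then $P_i+P_j$ reduces to $\infty$ on \emph{every} integral model on which $P_i$ and $P_j$ reduce to smooth points; and the only substitutions that make the $x$-coordinate of such a point integral have $\nu_p(u)<0$, which forces all $a_i'\equiv 0\pmod{\pp}$, turns the special fibre into a cuspidal cubic, and pushes the previously well-behaved points onto the singular point. The same intrinsic obstruction applies when some $P_i$ itself lies in the kernel of reduction. This situation is not vacuous: in Example \ref{first-example} one has $P+Q\equiv\infty\pmod{2}$ while $P$ and $Q$ are non-singular mod $2$, so your argument would not prove the proposition there.

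The paper's proof avoids all case analysis by working with the local N\'eron height $\lambda_p$, which is defined on every point of $E(\QQ)\setminus\{\infty\}$ regardless of its reduction. Lemma \ref{diff quad} combines the quasi-parallelogram law for $\lambda_p$ (Lemma \ref{quasi}) with the net-polynomial identity \eqref{par Psi} to show that $\eps(\vv)=\lambda_p(\vv\cdot\mathbf{P})-\frac{1}{12}\nu_p(\Delta_E)-\nu_p(\Psi_\vv(\mathbf{P}))$ is a quadratic form on $\ZZ^r$. Since non-singular reduction is preserved by the group law, every $\vv\cdot\mathbf{P}$ is non-singular mod $p$, and the explicit formula $\lambda_p(P)=\max\left\{-\frac{1}{2}\nu_p(x(P)),0\right\}+\frac{1}{12}\nu_p(\Delta_E)$ identifies $\eps(\vv)$ with $\nu_p(D_{\vv\cdot\mathbf{P}})-\nu_p(\Psi_\vv(\mathbf{P}))$. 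Subtracting the quadratic form $\vv\mapsto\nu_p(F_\vv(\mathbf{P}))$ gives the quadratic form $\epshat(\vv)=\nu_p(D_{\vv\cdot\mathbf{P}})-\nu_p(\Psihat_\vv(\mathbf{P}))$, which vanishes at all $\ee_i$ and $\ee_i+\ee_j$ by the normalization of $\Psihat$, hence vanishes identically by the rigidity of quadratic forms (\cite[Lemma 4.5]{Stange}). Some device of this kind --- a canonical local quantity insensitive to the choice of model, together with the parallelogram rigidity --- is what you need in place of your hard case.
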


Section \ref{sec5} includes proofs of Propositions \ref{valuation-prop}, \ref{second-proposition}, and Theorem \ref{first-theorem}. Also see Examples \ref{first-example} and \ref{second-example} for concrete descriptions of Proposition \ref{second-proposition}.

To prove Theorem \ref{first-theorem}, we need to study the behaviour of zeros of
an elliptic net $W:\ZZ^r \rightarrow \kk$, where $\kk$ is an arbitrary field. Recall that for the values of rank $1$ elliptic nets (i.e. elliptic
sequences), we have the concept of {\em rank of apparition}. More precisely, for any 
elliptic sequence $(W_n)$ we say that a natural number $\rho$ is a rank of apparition if $W_{\rho}=0$ and
$W_m \neq 0$ for any $m | \rho$.  
We say a sequence has {\em a unique rank of apparition} $\rho~ (>1)$ if 
$W_k = 0 $ if and only if $\rho | k$.
Motivated by this definition, we say an elliptic net $W:\ZZ^r \rightarrow \kk$ has a 
{\em unique rank of apparition with respect to the standard basis}
if each sequence $(W(n\ee_1)),~ (W(n\ee_2)),~\ldots,~(W(n\ee_r))$  has a unique rank of apparition.
In general, it is convenient to have a definition that works for a free finitely generated
Abelian group $A$, rather than $\ZZ^r$. 

\begin{defn} 
    \label{apparition} 
    Let $W: A \rightarrow \kk$ be an elliptic net of rank $r$. 
    Let $\mathscr{B}=\{\bb_1, \bb_2, \dots, \bb_r\}$ be a basis for $A$. We say
    that $W$ has a {\em unique rank of apparition with respect to 
        $\mathscr{B}$} if there exists an
    $r$-tuple $(\rho_1, \rho_2, \dots, \rho_r)$ of positive integers with
    $\rho_{i} > 1$ for $1 \leq i \leq r$, such that 
    $$W(n\bb_i)=0 \iff \rho_i \mid n,$$ 
    for all $1\leq i \leq r$.
\end{defn}
Note that an elliptic sequence $(W_n)$ has a unique rank of apparition if its corresponding net $n \mapsto W_n$ has a
unique rank of apparition with respect to $\{1\}$.

We remark here another possible generalization of a
unique rank of apparition of a sequence. Namely,
for a sequence $W:\ZZ \rightarrow \kk$, having a unique rank of apparition is the
same as $\Lambda = \{ v \in \ZZ \st W(v)=0\}$ being a subgroup of $\ZZ$.
Therefore, a natural generalization of the concept of unique rank of apparition to
elliptic nets $W:A \rightarrow \kk$ is that $\Lambda = W^{-1}(0)=\{\vv \in A \st W(\vv)=0\}$ to be
a subgroup of $A$. 
The following theorem shows that our concept of unique rank of apparition implies
that $\Lambda$ is a subgroup of $A$.
\begin{thm} 
    \label{second-theorem} 
    Let $W:A \rightarrow \kk$ be an elliptic net, and let
    $\mathscr{B}=\{\bb_1, \dots, \bb_r\}$ be a basis for $A$. Assume that $W$ has a
    unique {rank of apparition with respect to $\mathscr{B}$}.  Let  
    $$\Lambda = W^{-1}(0)=\{\vv \in A \st W(\vv) = 0 \}$$
    be the zero set of $W$. Then $\Lambda$ is a full rank subgroup of $A$. 
\end{thm}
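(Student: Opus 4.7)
The plan is to verify the three subgroup axioms together with the full-rank assertion for $\Lambda$. The easy parts are dispatched quickly. The condition $W(\mathbf{0})=0$ from the definition of an elliptic net gives $\mathbf{0}\in\Lambda$. To get oddness of $W$, I substitute $\p=\vv,\q=\mathbf{0},\rr=\vv,\s=\mathbf{0}$ into \eqref{net recurrence}; the terms involving $W(\mathbf{0})$ drop out and the recurrence reduces to $W(\vv)^{3}\bigl(W(\vv)+W(-\vv)\bigr)=0$, which handles $W(\vv)\neq 0$, while the symmetric substitution in $-\vv$ handles $W(\vv)=0$. Hence $\Lambda$ is closed under negation. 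Full rank is immediate from the hypothesis $W(\rho_{i}\bb_{i})=0$: the vectors $\rho_{1}\bb_{1},\dots,\rho_{r}\bb_{r}$ are $\ZZ$-linearly independent, so $\Lambda$ contains the full-rank sublattice $L=\bigoplus_{i=1}^{r}\rho_{i}\ZZ\bb_{i}$.

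The heart of the argument is closure under addition. For $\vv,\ww\in\Lambda$, the key substitution is $\p=\vv,\q=\ww,\rr=\bb_{1},\s=\mathbf{0}$ in \eqref{net recurrence}. Since $W(\vv)=W(\ww)=0$, the second and third summands vanish, leaving
\[
W(\vv+\ww)\,W(\vv-\ww)\,W(\bb_{1})^{2}=0.
\]
The hypothesis $\rho_{1}>1$ forces $W(\bb_{1})\neq 0$ (since $1$ is not a positive multiple of $\rho_{1}$), so $W(\vv+\ww)\,W(\vv-\ww)=0$ and at least one of $\vv\pm\ww$ already lies in $\Lambda$.

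The main obstacle is upgrading this \emph{at least one} conclusion to \emph{both}, i.e.\ ruling out the mixed case $W(\vv+\ww)\neq 0$ and $W(\vv-\ww)=0$ (the opposite mixed case being symmetric). My plan is to combine further substitutions in the recurrence. A useful intermediate result is a doubling lemma: $\vv\in\Lambda\Rightarrow 2\vv\in\Lambda$. The substitution $\p=2\vv,\q=\vv,\rr=\bb_{j},\s=\mathbf{0}$ yields $W(\vv+\bb_{j})\,W(\vv-\bb_{j})\,W(2\vv)^{2}=0$, and varying $j$ over the basis (with the rank-$1$ case handled separately, where $\Lambda=\rho_{1}\ZZ\bb_{1}$ is a subgroup by definition) eliminates the degenerate alternatives. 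With the doubling lemma in hand, feeding $W(2\vv)=W(2\ww)=0$ into the substitution $\p=\vv+\ww,\q=\vv-\ww,\rr=\bb_{1},\s=\mathbf{0}$ reduces the resulting identity to one that forces $W(\vv+\ww)=0$, completing closure under addition. The delicate step throughout is the combinatorial case analysis; the uniqueness of rank of apparition along every basis direction is essential to control where $W$ can vanish and to rule out pathological branches.
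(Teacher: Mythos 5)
Your opening moves (oddness, $\mathbf{0}\in\Lambda$, full rank via $\rho_i\bb_i\in\Lambda$, and the substitution $\p=\vv,\q=\ww,\rr=\bb_1,\s=\mathbf{0}$ giving $W(\vv+\ww)W(\vv-\ww)W(\bb_1)^2=0$) are all correct, and the last of these is indeed the same computation that appears inside the paper's Lemma \ref{lem4}. But the plan for ruling out the mixed case has a genuine gap, and it is exactly where the real difficulty of the theorem lives. Your doubling lemma rests on the identity $W(\vv+\bb_j)W(\vv-\bb_j)W(2\vv)^2=0$, from which you want to conclude $W(2\vv)=0$; this requires exhibiting some $j$ with $W(\vv+\bb_j)W(\vv-\bb_j)\neq 0$, and no argument is given. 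Likewise your final substitution $\p=\vv+\ww,\q=\vv-\ww,\rr=\bb_1,\s=\mathbf{0}$ leaves you (in the mixed case $W(\vv-\ww)=0$) with $W(\vv-\ww+\bb_1)W(\vv-\ww-\bb_1)W(\vv+\ww)^2=0$, and you need $W(\bm{\lambda}\pm\bb_1)\neq 0$ for $\bm{\lambda}=\vv-\ww\in\Lambda$. Both nonvanishing claims are of the form ``$W$ does not vanish at $\bm{\lambda}+\uu$ when $\bm{\lambda}\in\Lambda$ and $\uu\notin\Lambda$,'' which is an immediate consequence of $\Lambda$ being a subgroup --- i.e., of the theorem you are trying to prove --- and is false for general elliptic nets without the unique-rank-of-apparition hypothesis (cf.\ Theorem \ref{Ward6.2}). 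Saying that varying $j$ ``eliminates the degenerate alternatives'' does not close this circle.

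The paper escapes the circularity by inducting on the rank: it sets $A_i=\langle\bb_1,\dots,\bb_i\rangle$, $\Lambda_i=\Lambda\cap A_i$, assumes inductively that $\Lambda_{i-1}$ is a subgroup, and proves the needed nonvanishing statements as Lemmas \ref{lem2} and \ref{lem3}. Lemma \ref{lem2} controls $W(\vv+n\bb_i)$ when $\rho_i\mid n$ by choosing $\s=2n\bb_i$ so that the surviving factor is $W((2n+1)\bb_i)$, which is nonzero precisely because $\rho_i>1$ and $\rho_i\mid n$; Lemma \ref{lem3} controls $W(\vv+n\bb_i)$ for $\vv\in\Lambda_{i-1}$ by taking $\rr\in A_{i-1}\setminus\Lambda_{i-1}$ and using the inductive subgroup hypothesis to guarantee $W(\vv\pm\rr)\neq 0$. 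The full strength of ``$W(n\bb_i)=0\iff\rho_i\mid n$'' (not merely $W(\bb_i)\neq 0$) is used repeatedly, and a separate case analysis for $\rho_i=2$ versus $\rho_i>2$ is required in Lemma \ref{lem4} and in the main argument. Your proposal uses the hypothesis only through $W(\bb_j)\neq 0$, which is not enough; to repair it you would essentially have to rebuild the paper's inductive scaffolding.
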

\noindent We prove Theorem \ref{second-theorem} in Section \ref{sec3}.

The proof of Theorem \ref{first-theorem} comes as a combination of Theorems
\ref{ayad}, \ref{second-theorem} and the following theorem.

\begin{thm}[\bf Ward] 
    \label{Ward6.2}
    Let $(W_n)$ be an elliptic sequence.
    A necessary and sufficient condition that $(W_n)$ does not have a unique 
    rank of apparition  is that $W_3=W_4=0$.
\end{thm}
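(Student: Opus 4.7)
\emph{Sufficiency.} Assume $W_3 = W_4 = 0$. The strategy is to specialize the elliptic recurrence \eqref{eds1} at carefully chosen pairs $(m,n)$ so as to cascade this vanishing to every $k \geq 3$. Substituting $(m,n) = (k+1,k)$ and $(m,n) = (k+1,k-1)$ into \eqref{eds1} yields the duplication identities
\[
W_{2k+1}\,W_1^3 = W_{k+2} W_k^3 - W_{k-1} W_{k+1}^3,
\qquad
W_{2k}\,W_2 W_1^2 = W_k\bigl(W_{k+2} W_{k-1}^2 - W_{k-2} W_{k+1}^2\bigr).
\]
Direct evaluation at $k = 2, 3$ yields $W_5 = W_6 = 0$ (working in the nondegenerate case $W_1, W_2 \neq 0$; the other cases are immediate). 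A strong induction then shows $W_k = 0$ for every $k \geq 3$: in the even case $k = 2j \geq 6$ the factor $W_j$ is zero by induction; in the odd case $k = 2j+1 \geq 7$ both $W_j$ and $W_{j+1}$ are zero, killing each term on the right. The zero set of $(W_n)$ is therefore $\{0\} \cup \{n \in \ZZ : |n| \geq 3\}$, which is not of the form $\rho\ZZ$, so $(W_n)$ has no unique rank of apparition.

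\emph{Necessity.} I argue the contrapositive: assume $W_3 \neq 0$ or $W_4 \neq 0$, and suppose $(W_n)$ has at least one positive zero index (otherwise there is nothing to show). Let $\rho \geq 2$ be the smallest such index. Substituting $(m,n) = (\rho+1,\rho-1)$ into \eqref{eds1} collapses the right-hand side to a multiple of $W_\rho$, forcing $W_{2\rho} = 0$; propagation of vanishing to every multiple of $\rho$ then follows by an analogous induction choosing $(m,n)$ so that one side of \eqref{eds1} forces $W_{(k+1)\rho} = 0$ from $W_\rho = W_{k\rho} = 0$. Conversely, to rule out extraneous zeros, suppose $W_\tau = 0$ with $\tau = q\rho + s$, $0 < s < \rho$, and take $\tau$ minimal. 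Substituting $(m,n) = (q\rho, s)$ into \eqref{eds1} and using $W_{q\rho} = W_\tau = 0$ reduces the relation to $W_{q\rho+1}\,W_{q\rho-1}\,W_s^2 = 0$. Since $W_s \neq 0$ by minimality of $\rho$, at least one of $W_{q\rho \pm 1}$ must vanish, producing a new extraneous zero at an index congruent to $\pm 1 \pmod \rho$.

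The main obstacle is to turn this observation into a genuine descent argument whose terminus forces $W_3 = W_4 = 0$, contradicting the hypothesis. My plan is to iterate the recurrence — especially using small auxiliary index $n = 2$ or $n = 3$, so that factors of $W_3$ or $W_4$ appear explicitly — to cascade the extraneous zeros downward step by step. A case analysis on whether $W_{q\rho+1}$ or $W_{q\rho-1}$ vanishes at each stage, together with separate handling of small $\rho$, should eventually place two consecutive small zeros into positions $3$ and $4$, giving the desired contradiction. This mirrors Ward's original strategy and is the technically delicate core of the theorem.
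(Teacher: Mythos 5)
First, note that the paper does not actually supply a proof of Theorem \ref{Ward6.2}: it only remarks that the argument is analogous to \cite[Theorem 6.2]{Ward}, where the statement is proved for integer elliptic sequences reduced modulo $p$. So your proposal must stand on its own. Your sufficiency direction is correct, and in fact stronger than needed: once $W_3=W_4=0$, any $\rho>1$ with $W_k=0\iff\rho\mid k$ would have to divide $\gcd(3,4)=1$, so no unique rank of apparition can exist (this also disposes of the degenerate cases $W_1=0$ or $W_2=0$ at once). Your duplication identities and the cascade showing $W_k=0$ for all $k\ge 3$ check out against \eqref{eds1}.

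The necessity direction, however, has a genuine gap, which you acknowledge yourself: your closing paragraph is a plan, not a proof, and the steps you do write down already run into trouble. (i) For propagation of zeros to multiples of $\rho$, the substitution $(m,n)=(\rho+1,\rho-1)$ gives $W_{2\rho}W_2W_1^2=W_\rho(\cdots)=0$, which requires $W_2\neq 0$ and so fails when $\rho=2$; worse, for the inductive step the natural substitutions either are vacuous (e.g.\ $(m,n)=(k\rho,\rho)$ yields $W_{(k+1)\rho}W_{(k-1)\rho}W_1^2=0$ with $W_{(k-1)\rho}=0$ already known) or leave an undetermined factor such as $W_{(k-1)\rho+2}$ from $(m,n)=(k\rho+1,\rho-1)$, whose nonvanishing is exactly the ``no extraneous zeros'' claim you have not yet established, so the two halves of the induction are entangled. (ii) In the descent on an extraneous zero $\tau=q\rho+s$, the relation $W_{q\rho+1}W_{q\rho-1}W_s^2=0$ is vacuous precisely when $s=1$ (the factor $W_{q\rho+1}$ is the zero $W_\tau$ you started from), and symmetrically for $s=\rho-1$; these boundary residues are exactly where the descent must be driven down to positions $3$ and $4$, and they are left untreated. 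Until that case analysis is carried out (or replaced by Ward's route through the symmetry formalism, the analogue of Theorem \ref{Ward sym}), the necessity direction remains unproved.
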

The proof of Theorem \ref{Ward6.2} is analogous to \cite[Theorem 6.2]{Ward}
where the case of an integer elliptic sequence modulo $p$ has been considered. 

Here we describe another application of Theorem \ref{second-theorem}. Let $W_n=\hat{\psi}_n(P)$ as defined in Remark \ref{mark} (a). Then Proposition \ref{first-proposition} tells
us that in many cases, we can think of $W_n$ as the denominator of the point $nP$
for some elliptic curve $E/\QQ$ and some point $P \in E(\QQ)$. Now let $p$ be a prime
of good reduction and let $n_p$ be the order of the point $P$ in $E(\FF_p)$, where $\FF_p$ is the finite field of $p$ elements. Then 
we have that $(n_p+k)P \equiv kP \pmod p$.  Therefore, it is tempting to assume
that $W_{n_p+k} \equiv W_k \pmod p$. More generally, let $W:\ZZ \rightarrow \kk$ be
an elliptic sequence with $\rho$ the unique rank of apparition of $W$.
Then, one may speculate that $W_{\rho+k} = W_k$.
This in fact is not true. However, in \cite{Ward} the following is proved.
\begin{thm}[\bf Ward's Symmetry Theorem] 
    \label{Ward sym} 
    Let $(W_n)$ be an elliptic sequence and assume $W_2W_3 \neq 0$.
    Let $\rho>1$ be the unique rank of apparition of $W$.
    Then there exists $a, b \in \kk$ such that
    \begin{equation*} 
        W_{m\rho+n}  = a^{m^2} b^{mn} W_{n}
    \end{equation*} 
    for all $m,n \in \ZZ$.
\end{thm}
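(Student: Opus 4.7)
The plan is to first establish the $m=1$ case of the symmetry---that there exist $a,b\in\kk$ with $W_{\rho+n}=ab^n W_n$ for all $n\in\ZZ$---and then bootstrap to general $m$ by induction. For the $m=1$ case I extract two specializations of \eqref{eds1} using $W_\rho=0$. Substituting $\rho$ for $m$ yields
\[ W_{\rho+n}W_{\rho-n}W_1^2 = W_{\rho+1}W_{\rho-1}W_n^2, \]
so $W_{\rho+n}W_{\rho-n}=cW_n^2$ where $c:=W_{\rho+1}W_{\rho-1}/W_1^2$. Substituting $n+\rho$ for $m$ yields
\[ W_{n+\rho+1}W_{n+\rho-1}W_n^2 = W_{n+1}W_{n-1}W_{n+\rho}^2, \]
i.e.\ $U_{n+1}U_{n-1}=U_n^2$ for $U_n:=W_{\rho+n}/W_n$. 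The second identity says $(U_n)$ is geometric on every interval of consecutive $n$ for which $W_{n-1},W_n,W_{n+1}$ are all nonzero.

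Since $W_2 W_3\neq 0$ forces $\rho\geq 4$, the values $W_1,W_2,W_{\rho+1},W_{\rho+2}$ are nonzero, so I define
\[ b := \frac{W_{\rho+2}W_1}{W_2 W_{\rho+1}}\in\kk,\qquad a := \frac{W_{\rho+1}^2 W_2}{W_1^2 W_{\rho+2}}\in\kk, \]
with $U_1=ab$ and $U_2=ab^2$. Iteration of $U_{n+1}=U_n^2/U_{n-1}$ gives $U_n=ab^n$ for $1\leq n\leq\rho-1$. The first identity combined with oddness yields $U_n U_{-n}=-c$; matching with $U_n=ab^n$ forces the consistency $a^2=-c$, equivalent to the polynomial identity
\[ W_{\rho+1}^3 W_2^2 = -W_{\rho-1}W_1^2 W_{\rho+2}^2, \]
which itself follows from the instances $(m,n)=(\rho+1,2)$ and $(\rho+2,2)$ of \eqref{eds1} upon eliminating $W_{\rho+3}$. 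With $a^2=-c$ in hand, the formula $U_n=ab^n$ extends to negative $n$ with $|n|\leq\rho-1$. To cross the singular index $n=\rho$, I evaluate the first identity at $n=\rho+k$: $W_{2\rho+k}W_{-k}=cW_{\rho+k}^2$ gives $U_{\rho+k}=-c\,U_k=a^2 U_k$. Matching the two expressions for $U_{\rho-1}$---the geometric value $ab^{\rho-1}$ and the value $cW_{\rho-1}/W_1$ obtained from the first identity at $n=\rho-1$---produces the second consistency $a^2=b^\rho$. Thereafter $U_n=ab^n$ for all $n$ with $\rho\nmid n$; for $\rho\mid n$ both sides of the desired identity vanish.

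For general $m$, I induct on $|m|$. Using the $m=1$ case,
\[ W_{(m+1)\rho+n}=W_{\rho+(m\rho+n)}=a\,b^{m\rho+n}W_{m\rho+n}=a^{m^2+1}b^{m\rho+(m+1)n}W_n, \]
and comparison with $a^{(m+1)^2}b^{(m+1)n}W_n$ reduces to $(a^2)^m=(b^\rho)^m$, which holds by $a^2=b^\rho$. The case $m<0$ reduces to positive $m$ via the oddness of $W$. The main obstacle is the joint extraction of the consistency relations $a^2=-c$ and $a^2=b^\rho$, both of which emerge only after combining distinct specializations of \eqref{eds1} in a non-trivial way.
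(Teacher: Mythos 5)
Your proof is correct. Every step checks out: the two specializations $m=\rho$ and $m=n+\rho$ of \eqref{eds1} do give $W_{\rho+n}W_{\rho-n}=cW_n^2$ and $U_{n+1}U_{n-1}=U_n^2$; the hypothesis $W_2W_3\neq 0$ does force $\rho\geq 4$ so that $a$ and $b$ have nonzero denominators; the identity $W_{\rho+1}^3W_2^2=-W_{\rho-1}W_1^2W_{\rho+2}^2$ does follow from the instances $(\rho+1,2)$ and $(\rho+2,2)$ after cancelling $W_3$; and the two consistency relations $a^2=-c$ and $a^2=b^\rho$ are exactly what is needed to cross the zero at $n=\rho$ and to make the induction on $m$ close up. However, this is a genuinely different route from what the paper does. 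The paper does not prove Theorem \ref{Ward sym} directly at all: it cites Theorem 9.2 of \cite{Ward} (your argument is essentially a reconstruction of that original rank-one proof, including Ward's relations between $a$, $b$, and $\rho$), and then, in the text following the theorem, re-derives the statement as the rank-one specialization of the general net symmetry Theorem \ref{Ward-generalized}, by setting $\bm{\lambda}=m\rho$, $\vv=n$ and invoking the bilinearity of $\chi$ together with Corollary \ref{nsquare}, so that $a=\xi(\rho)$ and $b=\chi(\rho,1)$. The trade-off is clear: your argument is elementary and self-contained, stays entirely within the rank-one recursion \eqref{eds1}, and produces explicit closed-form expressions for $a$ and $b$ in terms of $W_1,W_2,W_{\rho\pm 1},W_{\rho+2}$; the paper's route requires the whole apparatus of $\delta$, $\chi$, $\xi$ and the hypothesis that $W^{-1}(0)$ is a subgroup, but in exchange it works uniformly for nets of arbitrary rank and explains the quadratic exponent $m^2$ and bilinear exponent $mn$ conceptually, as values of a quadratic function $\xi$ and a symmetric bilinear pairing $\chi$. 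One small point worth making explicit in your write-up: the oddness $W_{-n}=-W_n$ that you invoke is not literally part of the definition \eqref{eds1} but is supplied by Lemma \ref{lem prelim}(c) once one views the sequence as a rank-one elliptic net, which is how the paper uses the term.
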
 
\noindent See Theorem 9.2 of \cite{Ward} for a proof and Theorem 8.2 of \cite{Ward} for some properties of elements $a$ and $b$, when $\kk=\FF_p$. 
Note that the proofs also work for any field $\kk$.

The following theorem gives a generalization of Theorem \ref{Ward sym}.

\begin{thm}\label{Ward-generalized} 
    Let $W:A \rightarrow \kk$ be an elliptic net with the property that $\Lambda=W^{-1}(0)$ is a subgroup of $A$
    and assume $|A/\Lambda| \geq 4$. Then, there exist well defined functions
    $\xi :\Lambda \rightarrow \kk^\times$ and $\chi:\Lambda \times A \rightarrow \kk^\times$ such that
$$W(\bm{\lambda}+\vv)=\xi(\bm{\lambda})\chi(\bm{\lambda},\vv)W(\vv)~ for ~ all ~\bm{\lambda} \in \Lambda~
            and~ all ~\vv \in A,$$
and the functions $\xi$ and $\chi$ satisfy the following properties:
\begin{enumerate}[(i)]
        \item $\chi$ is bilinear,
        \item $\chi(\bm{\lambda}_1,\bm{\lambda}_2)=\chi(\bm{\lambda}_2,\bm{\lambda}_1)$,
        \item $\xi(\bm{\lambda}_1+\bm{\lambda}_2)=\xi(\bm{\lambda}_1)\xi(\bm{\lambda}_2)\chi(\bm{\lambda}_1,\bm{\lambda}_2)$, 
        \item $\xi(-\bm{\lambda})=\xi(\bm{\lambda})$, and
        \item $\xi(\bm{\lambda})^2 = \chi(\bm{\lambda},\bm{\lambda})$.
    \end{enumerate}
    Furthermore, the functions $\chi(\bm{\lambda},\p)$ and $\xi(\bm{\lambda})$, are defined by 
    $$
        \begin{array}{cccc} 
            \delta: & \Lambda \times (A \setminus \Lambda) & \longrightarrow & \kk^\times \\ 
            & (\bm{\lambda}, \p) & \longmapsto & \frac{W(\bm{\lambda}+\p)}{W(\p)},  
        \end{array}
    $$ 
    and relations
    $$
        \begin{array}{cccc} 
            \chi: & \Lambda \times A & \longrightarrow & \kk^\times \\ 
            & (\bm{\lambda}, \p) & \longmapsto & \frac{\delta(\bm{\lambda}, \p+\vv)}{\delta(\bm{\lambda}, \vv)}, 
        \end{array}
    $$ 
    where $\vv$ is any element of $A$ with $\vv,\vv+\p \notin \Lambda$, 
    and 
    $$
        \begin{array}{cccc} 
            \xi: & \Lambda & \longrightarrow & \kk^\times \\ 
            & \bm{\lambda} & \longmapsto & \frac{\delta(\bm{\lambda},\vv)}{\chi(\bm{\lambda},\vv)}, 
        \end{array}
    $$ 
    for any $\vv \in A\setminus \Lambda$.  
\end{thm}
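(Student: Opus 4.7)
My plan is to extract from the elliptic net recurrence \eqref{net recurrence} a single identity that exhibits the quasi-periodic structure of $W$ along $\Lambda$, and then to read off $\chi$ and $\xi$ from it. Since $\Lambda$ is a subgroup, the map $\delta(\bm{\lambda},\vv):=W(\bm{\lambda}+\vv)/W(\vv)$ is a well-defined element of $\kk^{\times}$ for every $\vv\in A\setminus\Lambda$. The key identity comes from setting $\q=\bm{\lambda}\in\Lambda$ in \eqref{net recurrence}: because $W(\bm{\lambda})=0$ the third summand vanishes and the remaining two rearrange to
\[
\frac{W(\p+\bm{\lambda}+\s)\,W(\p-\bm{\lambda})}{W(\p)\,W(\p+\s)}
=-\frac{W(\bm{\lambda}+\rr+\s)\,W(\bm{\lambda}-\rr)}{W(\rr)\,W(\rr+\s)}.
\]
The left side has no $\rr$ and the right no $\p$, so both equal a common function $g(\bm{\lambda},\s)$. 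Writing the left side as $\delta(\bm{\lambda},\p+\s)\,\delta(-\bm{\lambda},\p)$ and specializing to $\s=\mathbf{0}$ makes $\delta(\bm{\lambda},\p)\delta(-\bm{\lambda},\p)=g(\bm{\lambda},\mathbf{0})$ independent of $\p$. Dividing the two identities gives
\[
\frac{\delta(\bm{\lambda},\p+\s)}{\delta(\bm{\lambda},\p)}=\frac{g(\bm{\lambda},\s)}{g(\bm{\lambda},\mathbf{0})},
\]
which is independent of $\p$ --- precisely the statement that $\chi(\bm{\lambda},\s)$ is well defined via the displayed formula in the theorem.

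Bilinearity of $\chi$ in the second argument is then the telescoping
\[
\frac{\delta(\bm{\lambda},\vv+\p+\q)}{\delta(\bm{\lambda},\vv)}
=\frac{\delta(\bm{\lambda},\vv+\p+\q)}{\delta(\bm{\lambda},\vv+\p)}\cdot\frac{\delta(\bm{\lambda},\vv+\p)}{\delta(\bm{\lambda},\vv)},
\]
which requires $\vv$ outside the three cosets $\Lambda$, $-\p+\Lambda$, and $-\p-\q+\Lambda$; this is where $|A/\Lambda|\geq 4$ is invoked. Additivity of $\chi$ in the first argument and the symmetry $\chi(\bm{\lambda}_1,\bm{\lambda}_2)=\chi(\bm{\lambda}_2,\bm{\lambda}_1)$ of~(ii) both come from the obvious factorization $\delta(\bm{\lambda}_1+\bm{\lambda}_2,\vv)=\delta(\bm{\lambda}_1,\vv)\,\delta(\bm{\lambda}_2,\vv+\bm{\lambda}_1)$ and the equality of its two expressions upon interchange of $\bm{\lambda}_1$ and $\bm{\lambda}_2$.

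Now set $\xi(\bm{\lambda}):=\delta(\bm{\lambda},\vv)/\chi(\bm{\lambda},\vv)$. Independence of $\vv$ follows from $\delta(\bm{\lambda},\vv)/\delta(\bm{\lambda},\vv')=\chi(\bm{\lambda},\vv-\vv')=\chi(\bm{\lambda},\vv)/\chi(\bm{\lambda},\vv')$, which is just bilinearity applied to the definition of $\chi$. The relation $W(\bm{\lambda}+\vv)=\xi(\bm{\lambda})\chi(\bm{\lambda},\vv)W(\vv)$ is immediate by construction, and property~(iii) follows by computing $W(\bm{\lambda}_1+\bm{\lambda}_2+\vv)$ either in one step or by peeling off $\bm{\lambda}_1$ then $\bm{\lambda}_2$, and equating. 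For~(v), I apply the transformation law once to $\vv$ and once to $-\vv-\bm{\lambda}$, invoking $W(-x)=-W(x)$; multiplying the results collapses via bilinearity to $1=\xi(\bm{\lambda})^{2}\chi(\bm{\lambda},-\bm{\lambda})=\xi(\bm{\lambda})^{2}/\chi(\bm{\lambda},\bm{\lambda})$. Finally, (iv) is (iii) evaluated at $(\bm{\lambda}_1,\bm{\lambda}_2)=(\bm{\lambda},-\bm{\lambda})$, using $\xi(\mathbf{0})=1$ together with~(v).

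The conceptual heart of the proof is the choice $\q=\bm{\lambda}$ in the net recurrence; everything after it is a sequence of short algebraic maneuvers. The main technical obstacle is therefore the coherent bookkeeping of admissible auxiliary elements: at each step some $\vv$ or $\p\in A$ must avoid a prescribed short list of $\Lambda$-cosets, and $|A/\Lambda|\geq 4$ is precisely what is needed for all these choices to be simultaneously available, the tightest case being the three-coset condition in the proof of bilinearity.
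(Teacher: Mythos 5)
Your proof is correct and follows essentially the same route as the paper: substituting $\bm{\lambda}\in\Lambda$ for one of the slots in the net recurrence yields the separation-of-variables identity (the paper's Lemma \ref{sigeqn}), from which the well-definedness of $\chi$, its bilinearity and symmetry, the function $\xi$, and properties (iii)--(v) all follow by the same algebraic manipulations, including the use of $W(-\vv)=-W(\vv)$ for (iv)/(v). The only notable (minor) difference is that your one-line argument for the $\vv$-independence of $\delta(\bm{\lambda},\vv)/\chi(\bm{\lambda},\vv)$ via $\chi(\bm{\lambda},\vv-\vv')=\delta(\bm{\lambda},\vv)/\delta(\bm{\lambda},\vv')$ avoids the case split on whether $\vv_1+\vv_2\in\Lambda$ that the paper performs in Lemma \ref{constlem}.
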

Note that under conditions of Theorem \ref{Ward sym}, by considering $\bm{\lambda}=m\rho$ and $\vv=n$ in the previous theorem  and applying the bilinearity of $\chi$ and Corollary \ref{nsquare}, we obtain
$$W(m\rho+n)=\xi(m\rho) \chi(m\rho, n)W(n)=\xi(\rho)^{m^2} \chi(\rho, 1)^{mn}W(n).$$
Thus, by letting $a=\xi(\rho)$ and $b=\chi(\rho, 1)$ we have the assertion of Theorem \ref{Ward sym}.

We remark here that in \cite{Stange1}, Stange relates some of the functions given in Theorem \ref{Ward-generalized} 
to the Tate pairing on $E$. Furthermore, special cases of the above formula does show up in her
thesis. However to the best of our knowledge, the statement of the above theorem is new.

Given the properties of $\chi$ and $\xi$, for any $r \in \NN$, any
$\bm{\lambda}_1,\bm{\lambda}_2,\ldots,\bm{\bm{\lambda}}_r \in \Lambda$, and any $n_1,n_2,\ldots,n_r \in \ZZ$,
we get that
    \begin{equation}
    \label{W-formula}
        W\left(\left(\sum_{i=1}^{r}n_{i}\bm{\lambda}_{i}\right)+\vv \right) =
        \left(\prod_{i=1}^r \xi(\bm{\lambda}_i)^{n_i^2} \chi(\bm{\lambda}_i,\vv)^{n_i}
            \left(\prod_{j=1}^{i-1} \chi(\bm{\lambda}_i,\bm{\lambda}_j)^{n_in_j} \right)
            \right) W(\vv).
    \end{equation} 
As a simple corollary of the above identity, we have the following periodicity result. 
\begin{cor}
\label{cor-13}
Let $W:A \rightarrow \FF_q$ be an
elliptic net, and let $\Lambda=W^{-1}(0)$. Assume that $\Lambda$ is a subgroup of $A$  and
$|A/\Lambda|\geq 4$. 
Then
$W(\vv_1)=W(\vv_2)$ if $\vv_1 \equiv \vv_2 \pmod{ (q-1)}$.
\end{cor}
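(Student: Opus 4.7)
The plan is to interpret $\vv_1 \equiv \vv_2 \pmod{(q-1)}$ as $\vv_1 - \vv_2 \in (q-1)\Lambda$, which is the natural generalization of the rank-one setting. Indeed, in Ward's Symmetry Theorem over $\FF_p$, taking $m$ to be any multiple of $p-1$ makes the factor $a^{m^2}b^{mn}$ equal to $1$ in $\FF_p^\times$, yielding periodicity with period $(p-1)\rho$; since $\Lambda = \rho\ZZ$ in that case, this is precisely the sublattice $(p-1)\Lambda$. The same mechanism should work in arbitrary rank via formula \eqref{W-formula}.

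To execute the plan, I would write $\vv_1 = \vv_2 + (q-1)\bm{\lambda}$ for some $\bm{\lambda} \in \Lambda$ and apply \eqref{W-formula} with $r = 1$, $n_1 = q - 1$, and $\bm{\lambda}_1 = \bm{\lambda}$, obtaining
\begin{equation*}
W(\vv_1) \;=\; W\bigl((q-1)\bm{\lambda} + \vv_2\bigr) \;=\; \xi(\bm{\lambda})^{(q-1)^2}\,\chi(\bm{\lambda}, \vv_2)^{q-1}\,W(\vv_2).
\end{equation*}
Since $\xi(\bm{\lambda})$ and $\chi(\bm{\lambda}, \vv_2)$ both lie in $\FF_q^\times$, a cyclic group of order $q-1$, Fermat's little theorem gives $\chi(\bm{\lambda}, \vv_2)^{q-1} = 1$ and $\xi(\bm{\lambda})^{(q-1)^2} = \bigl(\xi(\bm{\lambda})^{q-1}\bigr)^{q-1} = 1$. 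The scalar factor collapses to $1$, and $W(\vv_1) = W(\vv_2)$ follows.

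There is no substantive obstacle here: the corollary is a one-line deduction from \eqref{W-formula} combined with the finiteness of $\FF_q^\times$. The only boundary case worth verifying is $\vv_2 \in \Lambda$, where $W(\vv_2) = 0$; since $\Lambda$ is a subgroup we also have $\vv_1 = \vv_2 + (q-1)\bm{\lambda} \in \Lambda$, so $W(\vv_1) = 0$, and the identity holds trivially outside the non-vanishing regime in which Theorem \ref{Ward-generalized} is formulated.
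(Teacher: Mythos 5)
Your proof is correct and follows essentially the same route as the paper: both read the congruence as $\vv_1-\vv_2\in(q-1)\Lambda$, plug into \eqref{W-formula}, and observe that every scalar factor is an element of $\FF_q^\times$ raised to an exponent divisible by $q-1$, hence equals $1$. Your reduction to a single $\bm{\lambda}$ with $n_1=q-1$ is equivalent to the paper's general linear combination with $(q-1)\mid n_i$, since $\Lambda$ is a group.
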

We can also employ \eqref{W-formula} in computing elliptic nets with values in finite fields (See Example \ref{third-example} for a description). Section \ref{sec4} is dedicated to proofs of Theorem \ref{Ward-generalized} and
its corollaries.

\section{Review of Elliptic Nets}
\label{sec2}
We will collect some basic facts about elliptic nets in this section for 
sake of completion. 
Recall that for a free Abelian group $A$ and an integral domain $R$,
we defined an elliptic net to be any map $W: A \rightarrow R$ with $W(\mathbf{0})=0$ and
\begin{multline*} 
    W(\p+\q+\s)W(\p-\q)W(\rr+\s)W(\rr) \\ + W(\q+\rr+\s)W(\q-\rr)W(\p+\s)W(\p) \\ +
    W(\rr+\p+\s)W(\rr-\p)W(\q+\s)W(\q) = 0, 
\end{multline*}
for all $\p,\q,\rr,$ and $\s \in A$.
Also recall that the rank of an elliptic net is defined to be the rank of
its domain $A$.

\begin{lem}
    \label{lem prelim}
    Let $W:A \rightarrow R$ be an elliptic net.
    \begin{enumerate}[(a)]
        \item For any integral domain $R'$ and any morphism $\pi:R \rightarrow R'$, the
            function $\pi \circ W:A \rightarrow R'$ is an elliptic net,
        \item For any subgroup $A' \subset A$, the function
             $W|_{A'} :A' \rightarrow R$ is an elliptic net.
        \item For any $\vv \in A$ we have $W(-\vv)=-W(\vv)$,
    \end{enumerate}
\end{lem}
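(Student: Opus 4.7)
The three parts split naturally and only one of them requires real work.

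Parts (a) and (b) are essentially functoriality statements. For (a), since $\pi$ is a ring morphism, $\pi(0)=0$, so $(\pi\circ W)(\mathbf{0})=0$; the recurrence for $W$ is an equation of the form $T_1+T_2+T_3=0$ with each $T_i$ a product of values of $W$, so applying $\pi$ to both sides and using that $\pi$ preserves sums and products yields the same recurrence for $\pi\circ W$. For (b), observe that $\mathbf{0}\in A'$ and that $A'$ is closed under the operations $\pm$ appearing in the recurrence, so both defining conditions for an elliptic net are inherited from $W$.

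Part (c) requires a substitution. The plan is to choose $\p,\q,\rr,\s$ so that most occurrences of $W$ collapse either to $W(\mathbf{0})=0$ or to $W(\pm\vv)$. Setting $\p=\q=\vv$ and $\rr=\s=\mathbf{0}$ kills the first of the three terms in \eqref{net recurrence} (it contains the factor $W(\mathbf{0})$), while the remaining two simplify to $W(\vv)^{4}+W(\vv)^{3}W(-\vv)=0$. Factoring gives $W(\vv)^{3}\bigl(W(\vv)+W(-\vv)\bigr)=0$, and since $R$ is an integral domain we conclude $W(-\vv)=-W(\vv)$ whenever $W(\vv)\neq 0$.

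To cover the remaining case $W(\vv)=0$, apply the same identity with $\vv$ replaced by $-\vv$: this yields $W(-\vv)^{3}\bigl(W(-\vv)+W(\vv)\bigr)=W(-\vv)^{4}=0$, so $W(-\vv)=0=-W(\vv)$ as well, closing the argument. The only conceivable obstacle is spotting the correct substitution for (c); once it is found, integrality of $R$ finishes the job, and no further case analysis is needed.
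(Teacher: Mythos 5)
Your proposal is correct and follows essentially the same route as the paper: parts (a) and (b) by direct inheritance of the recurrence, and part (c) via the substitution $\p=\q=\vv$, $\rr=\s=\mathbf{0}$ yielding $W(\vv)^{3}\bigl(W(\vv)+W(-\vv)\bigr)=0$. The only cosmetic difference is in the degenerate case: the paper observes that if $W(\vv)=W(-\vv)=0$ there is nothing to prove and otherwise assumes $W(\vv)\neq 0$ without loss of generality, whereas you rerun the identity at $-\vv$ to force $W(-\vv)^{4}=0$; both are valid.
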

\begin{proof}
    To prove the first two parts of this lemma, note that both
    $W|_{A'}$ and $\pi \circ W$ satisfy the elliptic net recurrence
    \eqref{net recurrence}.
    To prove $W(-\vv)=-W(\vv)$, observe that if  $W(\vv)=W(-\vv)=0$, then we are
    done.  Otherwise, assume without loss of generality that $W(\vv) \neq 0$. Then by
    setting $\p=\q=\vv$ and $\rr=\s=\mathbf{0}$ in \eqref{net recurrence} we have
    \[ W(\vv)^3(W(\vv)+W(-\vv))=0. \]
    Since $R$ is an integral domain, we get $W(-\vv)=-W(\vv)$.
\end{proof}

We have already remarked that the values of an elliptic net of rank $1$ form
an elliptic sequence. Let $W:A \rightarrow R$ be
any elliptic net, and let $\vv \in A$. Then by part (b) of the
above lemma, $W|_{ \vv\ZZ } : \ZZ \rightarrow R$
is an elliptic net of rank $1$.
Also, note that if $R$ is an integral domain and $\kk = \Frac(R)$, the fraction field of $R$, then 
$i:R \rightarrow \kk$ is injective. Therefore $i \circ W: A \rightarrow \kk$
is an elliptic net, and $(i \circ W)^{-1}(0)=W^{-1}(0)$.
Therefore we are not losing any generality in Theorems \ref{second-theorem}
and \ref{Ward-generalized} by focusing on elliptic nets 
having entries in a field.

Next we are interested in relating elliptic nets with linear combination of points on elliptic curves. In order to do this we review some results of \cite{Stange} on net polynomials. 

For a complex lattice $\Lambda \subset \CC$, let 
            $\sigma:\CC \rightarrow \CC$ be the Weierstrass
            $\sigma$ function
            \begin{align*}
                \sigma(z)=\sigma(z;\Lambda) = z \prod_{w \in \Lambda, w \neq 0}
                \left( 1 - {z \over w}\right) e^{ {z\over w} + {1\over 2} ({z\over w})^2} .
            \end{align*}
            Fix an $r$-tuple ${\mathbf z} = (z_1,z_2,\ldots,z_r) \in \CC^r$
            with $z_i \not \in \Lambda$ and $z_i + z_j \not \in \Lambda$.
            For an $r$-tuple $\vv=(v_1,v_2,\ldots,v_r) \in \ZZ^r$ define
            \begin{align*}
                \Omega_\vv({\mathbf z}) = \Omega_\vv(\zz ; \Lambda) = (-1)^{\sum_{1\leq i \leq j \leq r} v_i v_j +1}{\sigma(v_1z_1+v_2z_2+\cdots+v_rz_r) \over
                    \left( \prod_{i=1}^r \sigma(z_i)^{2v_i^2 - \sum_{j=1}^r v_iv_j} \right) 
                    \left( \prod_{1 \leq i < j \leq r} \sigma(z_i+z_j)^{v_iv_j} \right)}.
            \end{align*}
            \begin{thm}[{\bf Stange}]
            The function
            \begin{align*} 
                \begin{array}{cccc} 
                    \Omega = \Omega(\zz;\Lambda): & \ZZ^r & \longrightarrow & \CC \\ 
                    & \vv & \longmapsto & \Omega_\vv(\zz),  
                \end{array}
            \end{align*}
            is an elliptic net.
            \end{thm}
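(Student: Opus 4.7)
The plan is to verify the elliptic net recurrence \eqref{net recurrence} for $\Omega$ by reducing it to the classical three-term $\sigma$-identity,
\[
\sigma(u+v)\sigma(u-v)\sigma(x+y)\sigma(x-y) + \sigma(v+x)\sigma(v-x)\sigma(u+y)\sigma(u-y) + \sigma(x+u)\sigma(x-u)\sigma(v+y)\sigma(v-y) = 0,
\]
which itself follows in a few lines from $\sigma(a+b)\sigma(a-b)=\sigma(a)^2\sigma(b)^2(\wp(b)-\wp(a))$ together with the elementary algebraic identity $(\alpha-\beta)(\gamma-\delta)+(\alpha-\gamma)(\delta-\beta)+(\alpha-\delta)(\beta-\gamma)=0$. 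The initial condition $\Omega(\mathbf{0})=0$ is immediate from $\sigma(0)=0$.

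Writing $L(\vv):=v_1z_1+\cdots+v_rz_r$, I would substitute
\[
u=L(\p)+\tfrac{1}{2}L(\s),\quad v=L(\q)+\tfrac{1}{2}L(\s),\quad x=L(\rr)+\tfrac{1}{2}L(\s),\quad y=\tfrac{1}{2}L(\s)
\]
into the three-term identity. A direct check shows that the twelve arguments $u\pm v,\ v\pm x,\ x\pm u,\ u\pm y,\ v\pm y,\ x\pm y$ become exactly the twelve linear combinations $L(\p+\q+\s), L(\p-\q),\ldots,L(\q)$ appearing in the numerators of the three products of the elliptic net recurrence for $\Omega$, with matching signs so that no oddness correction of $\sigma$ is required. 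Hence the three-term identity is literally the elliptic net recurrence applied to the $\sigma$-numerators of $\Omega$.

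What remains is to see that the denominator $\prod_i\sigma(z_i)^{2v_i^2-\sum_j v_iv_j}\prod_{i<j}\sigma(z_i+z_j)^{v_iv_j}$ and the sign $(-1)^{\sum_{i\leq j}v_iv_j+1}$ in the definition of $\Omega_\vv$ contribute a common factor to all three summands and so pull out of the recurrence. The crucial observation is that the three quadruples
\[
\{\p+\q+\s,\p-\q,\rr+\s,\rr\},\ \{\q+\rr+\s,\q-\rr,\p+\s,\p\},\ \{\rr+\p+\s,\rr-\p,\q+\s,\q\}
\]
are cyclic images of one another under $\p\to\q\to\rr\to\p$ (fixing $\s$), so it suffices to show that for each exponent $e(\vv)$ of interest (namely $v_k^2$, $v_iv_j$, $2v_k^2-\sum_j v_kv_j$, and $\sum_{i\leq j}v_iv_j$) the sum of $e(\vv)$ over the four vectors in the first quadruple is invariant, as a quadratic form in the components of $\p,\q,\rr,\s$, under the cyclic permutation $(\p,\q,\rr)\mapsto(\q,\rr,\p)$. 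A short expansion confirms this: for example, summing $v_k^2$ gives $2p_k^2+2q_k^2+2r_k^2+2s_k^2+2(p_k+q_k+r_k)s_k$, which is manifestly symmetric in $\p,\q,\rr$, and the bilinear analogue is similar. Once this invariance is verified, the denominators and signs factor out as a common scalar across the three summands, and the recurrence collapses onto the three-term $\sigma$-identity.

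The main obstacle is simply the algebraic bookkeeping of these exponent sums; each individual identity is a low-degree polynomial check in the coordinates of $\p,\q,\rr,\s$, but one must carefully confirm that Stange's precise normalization $2v_i^2-\sum_j v_iv_j$ and $v_iv_j$ is exactly what is required to cancel the denominators in lockstep with the $\sigma$-identity. Once this is done the elliptic net axiom for $\Omega$ is established.
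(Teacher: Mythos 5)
Your proposal is correct, and it is essentially the argument behind the cited result: the paper itself offers no proof of this theorem, deferring to \cite[Theorem 3.7]{Stange}, where the recurrence is likewise verified by substituting $u=L(\p)+\tfrac12 L(\s)$, $v=L(\q)+\tfrac12 L(\s)$, $x=L(\rr)+\tfrac12 L(\s)$, $y=\tfrac12 L(\s)$ into the three-term Weierstrass $\sigma$-identity and observing that for any quadratic form $e$ the sum $e(\p+\q+\s)+e(\p-\q)+e(\rr+\s)+e(\rr)$ is symmetric in $\p,\q,\rr$, so the denominators and signs factor out uniformly. The only point worth adding is that the $\wp$-based derivation of the $\sigma$-identity requires the arguments to avoid $\Lambda$, so one should invoke analytic continuation (both sides being entire) to get the identity for all complex arguments before specializing.
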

            \begin{proof}
            See \cite[Theorem 3.7]{Stange}.
            \end{proof}
        
        Now let $E/\CC$ be an elliptic curve, and let 
            $\Lambda_E$ be the lattice corresponding to $E$.
            Let
            ${\mathbf P}=(P_1,\ldots,P_r) \in E(\CC)^r$ with
            $P_i, P_i + P_j \neq \infty$ and
            let ${\mathbf z}=(z_1,\ldots,z_r) \in \CC^r$ be such
            that $z_i$ maps to $P_i$ under the uniformization map
            \[ \CC \rightarrow \CC/\Lambda_E \simeq E(\CC). \]
            Then the function
            \begin{align*} 
                \begin{array}{cccc} 
                    \Psi(\mathbf{P};E): & \ZZ^r & \longrightarrow & \CC \\ 
                    & \vv & \longmapsto & \Omega_\vv(\zz),  
                \end{array}
            \end{align*}
            is an elliptic net with values in $\mathbb{C}$. We call $\Psi({\bf P};E)$ the \emph{elliptic net associated to $E( {\rm over~} \CC)$ and $\mathbf{P}$}.
       
            Let $S^\univ=\ZZ[\alpha_1,\alpha_2,\alpha_3,\alpha_4,\alpha_6]$, and for
            any positive integer $r$ let
            \[ 
                \RU_{r}^{\univ} = S^{\univ}[x_i,y_i]_{1 \leq i \leq r}[(x_i-x_j)^{-1}]_{1 \leq i < j \leq r}/\langle f(x_i,y_i) \rangle _{1 \leq i \leq r},
            \]
            where $f(x, y)$ is given by \eqref{WE}.
            Then for every elliptic curve $E/K$ defined by the polynomial
            \eqref{WE}, and $\mathbf{P} \in E(K)^r$ with $P_i, P_i \pm P_j \neq \infty$, 
            we can find a morphism
            \[ \pi=\pi_{\mathbf{P};E} : \RU_{r}^{\univ} \rightarrow K \]
            so that
            $\pi(\alpha_i)=a_i$, and $(\pi(x_i),\pi(y_i))=P_i$.
            The following result is proved in \cite[section 4]{Stange}. 
            \begin{thm}[{\bf Stange}]
            For each $\vv \in \ZZ^r$,
            there is $\Psi_{\vv}^{\univ} \in \RU_{r}^{\univ}$ so that
            $\Psi^\univ:\vv \mapsto \Psi_{\vv}^{\univ}$ is an elliptic net, and
            for any elliptic curve $E/\CC$ and $\mathbf{P} \in E(\CC)^r$
            with $P_i, P_i \pm P_j \neq \infty$ we have
            \[ \pi_{\mathbf{P};E}\circ \Psi^\univ = \Psi(\mathbf{P};E). \]
        \end{thm}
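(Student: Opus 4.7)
The plan is to construct $\Psi^\univ$ by specifying its values on a small set of initial vectors inside $\RU_r^\univ$ and then extending to all of $\ZZ^r$ via the elliptic net recurrence \eqref{net recurrence}. The key observation that makes this work is that the localized ring $\RU_r^\univ = S^\univ[x_i, y_i][(x_i - x_j)^{-1}]/\langle f(x_i,y_i)\rangle$ has precisely the denominators needed to express $\Psi^\univ_{2\ee_i + \ee_j}$, and that both sides of the desired identity are elliptic nets which are determined uniquely by their values on initial data.

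First, I would identify candidate initial values. Setting $\Psi^\univ_{\ee_i} = 1$, $\Psi^\univ_{2\ee_i} = 2y_i + \alpha_1 x_i + \alpha_3$, $\Psi^\univ_{\ee_i + \ee_j} = 1$, and
\[
\Psi^\univ_{2\ee_i + \ee_j} = 2x_i + x_j - \left(\tfrac{y_j - y_i}{x_j - x_i}\right)^2 - \alpha_1 \tfrac{y_j - y_i}{x_j - x_i} + \alpha_2,
\]
all of which lie in $\RU_r^\univ$ (the last using the inverted elements $(x_i - x_j)^{-1}$). Using classical identities for the Weierstrass $\sigma$-function (the $\wp(u) - \wp(v) = -\sigma(u+v)\sigma(u-v)/\sigma(u)^2\sigma(v)^2$ identity and the duplication formula), a direct calculation in the uniformization $\CC/\Lambda_E$ shows that the complex-analytic function $\Omega_\vv(\zz;\Lambda_E)$ specializes to the same expressions when one substitutes $(x_i, y_i) = (\wp(z_i), \wp'(z_i)/2)$ rewritten in Weierstrass coordinates.

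Next, I would extend $\Psi^\univ$ to all of $\ZZ^r$ by iterated use of the elliptic net recurrence. For $r = 1$ this is the classical extension via \eqref{dpr}, which expresses $\psi_{m+n}$ in terms of lower-indexed terms. For $r > 1$, Stange gives explicit recursion schemes (see her Theorem 2.5, Lemma 2.6, and Theorem 2.8) that propagate the definition from the given initial vectors to every $\vv \in \ZZ^r$; the key bookkeeping is to show that every division required is by an element already invertible in $\RU_r^\univ$, so that each $\Psi^\univ_\vv$ genuinely lives in the universal localized ring. This is the step where the choice of inverted elements $(x_i - x_j)^{-1}$ is crucial, and represents the main technical obstacle: one must verify by induction that no new denominators creep in, and that different orders of applying the recurrence to reach the same $\vv$ produce the same answer.

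Finally, for the specialization identity $\pi_{\mathbf{P};E} \circ \Psi^\univ = \Psi(\mathbf{P};E)$, I would invoke Lemma~\ref{lem prelim}(a) to conclude that $\pi_{\mathbf{P};E} \circ \Psi^\univ$ is an elliptic net with values in $K$ (or $\CC$ when $E/\CC$). Over $\CC$, the preceding Stange theorem asserts that $\Omega(\zz;\Lambda_E) = \Psi(\mathbf{P};E)$ is also an elliptic net. Both nets agree on the initial vectors $\ee_i, 2\ee_i, \ee_i + \ee_j, 2\ee_i + \ee_j$ by the first step, and since the recurrence \eqref{net recurrence} determines all values from these initial data, the two nets are identical. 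Passing from $\CC$ back to arbitrary $K$ via the universality of $\RU_r^\univ$ then yields the statement for every elliptic curve and every admissible $r$-tuple of points.
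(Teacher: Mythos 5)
The paper offers no proof of this statement at all --- it is quoted verbatim from Section 4 of Stange's paper --- so there is no in-paper argument to measure you against; what you have written is a reconstruction of Stange's construction. Your initial values agree with \eqref{Psi init} (with $a_k$ replaced by $\alpha_k$), and you correctly identify that the point of inverting $x_i-x_j$ is to house $\Psi^{\univ}_{2\ee_i+\ee_j}$. But note that your direction of construction is the reverse of Stange's: she defines the net analytically via $\Omega_\vv(\zz;\Lambda)$ first, so that existence and the recurrence \eqref{net recurrence} come for free from the $\sigma$-function identity, and only afterwards proves that each $\Omega_\vv$ is represented by an element of the localized ring. Building the net bottom-up from the recurrence, as you propose, obliges you to prove the consistency statement you merely flag (that different derivation paths to the same $\vv$ give the same element, and that no unlicensed denominators appear); that is precisely the content of her Theorems 2.5--2.8 and is not a routine bookkeeping step.

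The genuine gap is in your final paragraph. For a \emph{fixed} $E/\CC$ and a fixed admissible $\mathbf{P}$, the argument ``both sides are elliptic nets agreeing on the initial vectors, and the recurrence determines everything from the initial data, hence they coincide'' is not valid: solving \eqref{net recurrence} for the top term requires dividing by a product of lower terms, and after specialization these may vanish. Already in rank $1$, if $P_1$ is $2$-torsion then $\psi_2(P_1)=0$ and the recursion \eqref{dpr} does not determine $\psi_n(P_1)$, $n\geq 5$, from $\psi_1,\ldots,\psi_4$; more generally any torsion among the $P_i$ kills the determinacy. Uniqueness from initial data is available in the universal ring $\RU_r^{\univ}$, where the relevant divisors are nonzero elements of an integral domain, but not after an arbitrary specialization. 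The correct route is to prove the identity between $\Psi_\vv^{\univ}$ and the algebraic expression of $\Omega_\vv$ once and for all at the universal level --- as an equality in $\RU_r^{\univ}$, equivalently of rational functions on $E^r$ for a generic curve and generic points --- and only then apply the ring homomorphism $\pi_{\mathbf{P};E}$ to both sides; this homomorphism is defined on all of $\RU_r^{\univ}$ exactly because the hypotheses $P_i\neq\infty$ and $P_i\pm P_j\neq\infty$ force $x(P_i)-x(P_j)\neq 0$. (A minor further point: the substitution matching $\Omega_\vv$ to Weierstrass coordinates must use the uniformization adapted to the general equation \eqref{WE}, not $(x_i,y_i)=(\wp(z_i),\wp'(z_i)/2)$, which presumes a short Weierstrass form.)
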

Let $\RU_r^\univ$, $S^\univ$, and  
            $E/K$ be as before.
            Then, there exists a map $\pi_E : S^\univ \rightarrow K$, so that
            $\pi_E(\alpha_i)=a_i$.
            This induces a map
            \[ (\pi_E)_* : \RU_r^\univ \rightarrow
            K[x_i,y_i]_{1 \leq i \leq r}[(x_i-x_j)^{-1}]_{1 \leq i < j \leq r}/\langle f(x_i,y_i) \rangle _{1 \leq i \leq r}. \]
            Then part (a) of lemma \ref{lem prelim} shows that 
            $\Psi : \vv \mapsto (\pi_E)_* (\Psi_\vv^\univ)$ defines an elliptic net with values in 
            \[ \RU_r := K[x_i,y_i]_{1 \leq i \leq r}[(x_i-x_j)^{-1}]_{1 \leq i < j \leq r}/\langle f(x_i,y_i) \rangle _{1 \leq i \leq r}.  \]
            We call $\Psi_\vv \in \RU_r$ the \emph{$\vv$-th net polynomial} associated to $E$.
            \remove{In this paper, we use $\Psi_\vv$ instead of $(\pi_E)_*(\Psi_\vv^\univ)$, when
            $E/K$ is clear.}
            Now let 
            ${\mathbf{P}} \in E(K)^r$ with $P_i, P_i \pm P_j \neq \infty$. 
            Then by part (a) of Lemma \ref{lem prelim}, 
            $\Psi(\mathbf{P};E):\vv \mapsto \Psi_\vv(\mathbf{P})$ is an elliptic net with values in $K$.
We call $\Psi({\bf P};E)$ the \emph{elliptic net associated to $E( {\rm over~} K)$ and $\mathbf{P}$}.  

Here we note that $\Psi_{n\ee_i}(\mathbf{P})=\psi_n(P_i)$. Moreover, we remark that for $E/K$ defined by the polynomial \eqref{WE}, we
can compute $\Psi_\vv$ explicitly. In fact for 
$\vv \in \{ \ee_i, 2\ee_i, \ee_i+\ee_j, 2\ee_i+\ee_j \st i \neq j \}$, the exact
values of $\Psi_\vv$ are given by \eqref{Psi init}.
Furthermore, as we pointed out in the introduction, Theorem 2.5, Lemma 2.6, 
and Theorem 2.8 of \cite{Stange} prove that these initial conditions
are sufficient for computing $\Psi_\vv$ for any $\vv \in \ZZ^r$.
\begin{exam}
    If we let $(\p,\q,\rr,\s)=(\ee_i+\ee_j, \ee_i\pm \ee_k, -\ee_i, -\ee_i)$, then from \eqref{net recurrence} we get that
    \[ \Psi_{\ee_i+\ee_j \pm \ee_k} \Psi_{\ee_j\mp \ee_k} \Psi_{-2\ee_i}\Psi_{-\ee_i}+\Psi_{-\ee_i \pm \ee_k} \Psi_{2\ee_i\pm \ee_k} \Psi_{\ee_j}\Psi_{\ee_i+\ee_j}+\Psi_{-\ee_i\pm \ee_k} \Psi_{-2\ee_i- \ee_j} \Psi_{\pm\ee_k}\Psi_{\ee_i\pm \ee_k} = 0 .\]
We note that in \cite[Theorem 2.5]{Stange} it is shown that the terms $\Psi_{\ee_i - \ee_j}$, and $\Psi_{2\ee_{i} - \ee_{j}}$ can be computed explicitly in terms of $\Psi_{\vv}$ for $\vv \in \{ \ee_i, 2\ee_i, \ee_i+\ee_j, 2\ee_i+\ee_j \st i \neq j \}$. In particular, setting $(\p, \q, \rr, \s) = (\ee_i, \ee_j, \mathbf{0}, \ee_i + \ee_j)$ gives
    \[\Psi_{\ee_i - \ee_j} = \Psi_{\ee_i + 2\ee_j} - \Psi_{2\ee_i + \ee_j}.\]
Similarly, taking $(\p,\q,\rr,\s) = (-\ee_i + \ee_j, \ee_j, \ee_i, \ee_i)$, we have
    \[\Psi_{2\ee_i -\ee_j} = \psi_{2\ee_i}\psi_{2\ee_j} - \psi_{2\ee_i + \ee_j}\psi_{\ee_i - \ee_j}^2. \]
Thus $\Psi_{\ee_i+\ee_j \pm \ee_k}$ can be computed using $\Psi_{\vv}$ for $\vv \in \{ \ee_i, 2\ee_i, \ee_i+\ee_j, 2\ee_i+\ee_j \st i \neq j \}.$
\end{exam}

We are interested in relating $\Psi_\vv(\mathbf{P})$ to the denominators of 
linear combinations of points on an elliptic curve. To do this, recall that
for any $E/K$ given by the polynomial \eqref{WE}, 
$\mathbf{P} \in E(K)^r$, and $\vv \in \ZZ^r$ we can find rational
functions (by repeated use of doubling and addition formulas for elliptic curves) 
$X_\vv, Y_\vv \in \Frac(\RU_r)$, the fraction field of $\RU_r$, such that
\[ \vv \cdot \mathbf{P} = v_1P_1+\cdots + v_r P_r = (X_\vv(\mathbf{P}),Y_\vv(\mathbf{P})). \]
The following lemma gives an explicit representation for $X_\vv$ in terms of net polynomials.
\begin{lem}
    \label{numerator formula}
    Let $E/K$ be an elliptic net, and let $\mathbf{P} \in E(K)^r$ be such that
    $P_i \neq \infty$ and $P_i \pm P_j \neq \infty$. Then
    For any $\vv \in \ZZ^r$, there is $\Phi_\vv \in \RU_r$ such that
    \[ X_{\vv} = {\Phi_\vv \over \Psi_\vv^2}. \]
    In particular for any $1 \leq i \leq r$ we have
    \[ \Phi_\vv (\mathbf{P}) = \Psi_\vv^2(\mathbf{P}) x(P_i) - \Psi_{\vv + \ee_i}(\mathbf{P})\Psi_{\vv-\ee_i}(\mathbf{P}). \]
\end{lem}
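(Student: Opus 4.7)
The plan is to reduce the identity to a complex-analytic statement via the universal net polynomial, and then to derive it from the classical Weierstrass sigma formula $\wp(u)-\wp(v) = -\sigma(u+v)\sigma(u-v)/(\sigma(u)^{2}\sigma(v)^{2})$; this generalizes the familiar $r=1$ identity $\phi_n = x\psi_n^2 - \psi_{n+1}\psi_{n-1}$ for division polynomials. Set $\Phi_\vv := x_i\Psi_\vv^2 - \Psi_{\vv+\ee_i}\Psi_{\vv-\ee_i} \in \RU_r$; the two assertions of the lemma are captured in the single equality $X_\vv \Psi_\vv^2 = \Phi_\vv$ inside $\Frac(\RU_r)$, which in particular forces independence from $i$ because the left side does not involve $i$. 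This is an algebraic identity inherited from the universal ring $\RU_r^\univ$, which is an integral domain; by the Lefschetz principle it suffices to verify it after every complex specialization $\pi_{\mathbf{P};E}$ (for $E/\CC$ and $\mathbf{P} \in E(\CC)^r$ satisfying $P_i, P_i\pm P_j\neq\infty$).

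Working over $\CC$, I would fix a lift $\zz \in \CC^r$ of $\mathbf{P}$ under the uniformization $\CC \rightarrow \CC/\Lambda_E \simeq E(\CC)$, and use the identification $\Psi(\mathbf{P};E) = \Omega(\zz;\Lambda_E)$ recalled earlier in the excerpt. The identity then becomes
\[ x(\vv\cdot\mathbf{P}) - x(P_i) \;=\; -\,\frac{\Omega_{\vv+\ee_i}(\zz)\,\Omega_{\vv-\ee_i}(\zz)}{\Omega_\vv(\zz)^{2}}. \]
Since $x$ and $\wp$ differ only by an additive constant in the long Weierstrass form, the left side equals $\wp(\vv\cdot\zz)-\wp(z_i)$. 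Applying the Weierstrass sigma identity with $u=\vv\cdot\zz$ and $v=z_i$ then reduces everything to the purely formal identity
\[ \frac{\Omega_{\vv+\ee_i}(\zz)\,\Omega_{\vv-\ee_i}(\zz)}{\Omega_\vv(\zz)^{2}} \;=\; \frac{\sigma(\vv\cdot\zz+z_i)\,\sigma(\vv\cdot\zz-z_i)}{\sigma(\vv\cdot\zz)^{2}\,\sigma(z_i)^{2}}. \]

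The principal obstacle is this last identity. Writing $\Omega_\vv(\zz) = (-1)^{\epsilon(\vv)}\sigma(\vv\cdot\zz)/D_\vv(\zz)$, where $D_\vv(\zz) = \prod_{j} \sigma(z_j)^{Q_j(\vv)}\prod_{j<k}\sigma(z_j+z_k)^{v_jv_k}$, $Q_j(\vv) = 2v_j^2 - v_j\sum_\ell v_\ell$, and $\epsilon(\vv) = \sum_{k\le\ell}v_kv_\ell + 1$, the factors $\sigma(\vv\cdot\zz\pm z_i)$ and the denominator $\sigma(\vv\cdot\zz)^{2}$ match by inspection, and the claim reduces to
\[ \frac{D_\vv(\zz)^{2}}{D_{\vv+\ee_i}(\zz)\,D_{\vv-\ee_i}(\zz)} \;=\; \sigma(z_i)^{-2}, \qquad \epsilon(\vv+\ee_i)+\epsilon(\vv-\ee_i)-2\epsilon(\vv) \equiv 0 \pmod{2}. \]
The decisive observation is that every exponent appearing above is a polynomial in $\vv$ of degree at most $2$, so its second difference $f(\vv+\ee_i)+f(\vv-\ee_i)-2f(\vv)$ equals the constant $\partial^{2}f/\partial v_i^{2}$, independent of $\vv$. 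A routine check gives $2\delta_{ij}$ for $Q_j$ (yielding exactly the required $\sigma(z_i)^{-2}$), $0$ for each bilinear exponent $v_jv_k$, and $2$ for $\epsilon$ (leaving the sign trivial). This completes the complex-analytic identity, and by the Lefschetz reduction it gives the lemma in $\RU_r$ for an arbitrary base field $K$.
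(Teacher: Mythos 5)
Your argument is correct, but it takes a more self-contained route than the paper. The paper's proof is a two-line citation: it invokes Stange's identity $\Psi_\vv^2 \Psi_\uu^2(X_\vv - X_\uu)=-\Psi_{\vv+\uu}\Psi_{\vv-\uu}$ (\cite[Lemma 4.2]{Stange}) and specializes $\uu=\ee_i$, using $\Psi_{\ee_i}=1$ and $X_{\ee_i}(\mathbf{P})=x(P_i)$ to land exactly on the displayed formula. What you have done is reprove the $\uu=\ee_i$ case of that cited lemma from first principles: you go back to the sigma-function definition of $\Omega_\vv$, match the $\sigma(\vv\cdot\zz\pm z_i)$ factors against the classical identity $\wp(u)-\wp(v)=-\sigma(u+v)\sigma(u-v)/\bigl(\sigma(u)^2\sigma(v)^2\bigr)$, and reduce the residual denominator and sign bookkeeping to second differences of the quadratic exponents $Q_j(\vv)=2v_j^2-v_j\sum_\ell v_\ell$, the bilinear exponents $v_jv_k$, and the sign exponent $\sum_{k\le\ell}v_kv_\ell+1$; your computed values $2\delta_{ij}$, $0$, and $2$ are all correct, and the transfer from $\CC$ to $\RU_r^\univ$ and thence to an arbitrary $K$ is the standard mechanism (it is essentially how Stange proves Lemma 4.2 herself). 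The trade-off is clear: the paper's proof is shorter and leans on the reference, while yours is verifiable without opening \cite{Stange}, at the cost of re-deriving machinery the paper treats as known. Two points you are implicitly gliding over, neither fatal: the Lefschetz-style step needs the fact that an element of the integral domain $\RU_r^\univ$ vanishing under all complex specializations is zero (this is established in \cite{Stange} but deserves an explicit citation), and the identity should be read in $\Frac(\RU_r)$ with the understanding that $\vv$ avoids the degenerate cases where $\Psi_\vv=0$ or $\vv\cdot\mathbf{P}=\infty$ — a caveat the paper's own statement shares.
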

\begin{proof}
    In \cite[Lemma 4.2]{Stange}, it is proved that for any
    $\vv,\uu \in \ZZ^r$ we have
    \[ \Psi_\vv^2 \Psi_\uu^2(X_\vv - X_\uu)=-\Psi_{\vv+\uu}\Psi_{\vv-\uu}. \]
    If we let $\uu=\ee_i$, then
    $X_\uu (\mathbf{P})= x(P_i)$. Thus we have 
    \[ (\Psi_\vv^2 X_\vv)(\mathbf{P}) = \Psi_\vv^2(\mathbf{P}) x(P_i) - \Psi_{\vv + \ee_i}(\mathbf{P})\Psi_{\vv-\ee_i}(\mathbf{P}), \]
    which gives us the desired result.
\end{proof}

\begin{defn}
Let $B$ and $C$ be Abelian groups  written additively. Furthermore, assume that $C$
is $2$-torsion free. Then a function $F:B \rightarrow C$ 
is a quadratic form if
\begin{align}
    \label{paral}
    F(x+y)+F(x-y)=2F(x)+2F(y), 
\end{align}
for all $x,y \in B$. 
\end{defn}
Equation \eqref{paral} is sometimes called the {\em parallelogram law}.
\begin{exam}
    \begin{enumerate}[(a)]
        \item Let $a_i, c_{ij} \in \QQ$ and consider $F:\ZZ^r \rightarrow \QQ$ defined by
            \[ F(v_1,v_2,\ldots,v_r) = \sum_{i=1}^r a_i v_i^2 + \sum_{1 \leq i < j \leq r} c_{ij}v_iv_j. \]
            Then we can check that $F$ satisfies the parallelogram law \eqref{paral}.
        \item Let $p_i, q_{ij} \in \QQ^\times$. Then
            the function $G:\ZZ^r \rightarrow \QQ^\times$ defined by
            \[ G(v_1,v_2,\ldots,v_r) = \prod_{i=1}^r p_i^{ v_i^2} \cdot \prod_{1 \leq i < j \leq r} q_{ij}^{v_iv_j} \]
            is a quadratic form.
        \item Let $F_1,F_2 : B \rightarrow C$ be two quadratic forms. Then
            their difference, $F_1-F_2$, is again a quadratic form.
    \end{enumerate}
\end{exam}
The main reason we are interested in quadratic forms is the following result.
\begin{prop}
    Let $K$ be a field and let $W:A \rightarrow K$ be an elliptic net.
    Let $F:A \rightarrow K^\times$ be a quadratic form. Then
    \begin{equation}
        \begin{array}{cccc} 
            W^F: & A & \longrightarrow & K \\
            & \vv & \longmapsto & W(\vv)F(\vv)
        \end{array}
    \end{equation}
    is an elliptic net.
\end{prop}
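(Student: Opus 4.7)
The strategy is to reduce the elliptic net recurrence for $W^F$ to that of $W$ by factoring a common $F$-product out of each of the three terms. Since $W^F(\mathbf{0}) = W(\mathbf{0})F(\mathbf{0}) = 0$, it suffices to verify \eqref{net recurrence} for $W^F$. Each of its three terms equals the corresponding term for $W$ multiplied by a product of four $F$-values, so if those three $F$-products are all equal to a common $\alpha \in K^\times$, then the recurrence sum for $W^F$ is $\alpha$ times the recurrence sum for $W$, hence $0$. Thus the task reduces to proving
\begin{multline*}
F(\p+\q+\s)F(\p-\q)F(\rr+\s)F(\rr) = F(\q+\rr+\s)F(\q-\rr)F(\p+\s)F(\p) \\
= F(\rr+\p+\s)F(\rr-\p)F(\q+\s)F(\q).
\end{multline*}

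To this end, introduce the associated symmetric ``bilinear'' form (written multiplicatively)
$$B: A \times A \longrightarrow K^\times, \qquad B(x,y) = \frac{F(x+y)}{F(x)F(y)}.$$
The multiplicative parallelogram law $F(x+y)F(x-y) = F(x)^2 F(y)^2$ immediately yields $F(\mathbf{0}) = 1$ (take $x=y=\mathbf{0}$), $F(-x) = F(x)$ (take $x=\mathbf{0}$), and
$$F(x-y) = \frac{F(x)F(y)}{B(x,y)}.$$
A standard application of the parallelogram law to the pairs $(x+y,z)$, $(x,z)$, $(y,z)$ shows that $B$ is symmetric and biadditive in each argument. Expanding $F((x+y)+z)$ and using biadditivity then gives
$$F(x+y+z) = F(x)F(y)F(z)\, B(x,y)\, B(x,z)\, B(y,z).$$

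Plugging these formulas into the first product, the factor $B(\p,\q)$ coming from $F(\p+\q+\s)$ cancels against the $B(\p,\q)^{-1}$ coming from $F(\p-\q)$, and direct expansion yields
$$F(\p+\q+\s)F(\p-\q)F(\rr+\s)F(\rr) = F(\p)^2 F(\q)^2 F(\rr)^2 F(\s)^2\, B(\p,\s)\, B(\q,\s)\, B(\rr,\s),$$
which is manifestly invariant under cyclic permutations of $(\p,\q,\rr)$ with $\s$ fixed. Identical computations show the other two products take the same value, producing the desired common factor $\alpha$ and completing the reduction.

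The only mildly technical step is establishing the biadditivity of $B$ from the parallelogram law, which is a standard piece of quadratic-form algebra; everything else is bookkeeping.
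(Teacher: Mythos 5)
Your proof is correct, but it takes a different route from the paper: the paper does not prove this proposition at all, it simply cites \cite[Proposition 6.1]{Stange}, whereas you give a self-contained verification. Your reduction is exactly right: $W^F(\mathbf{0})=0$ is immediate, and each of the three terms of \eqref{net recurrence} for $W^F$ is the corresponding term for $W$ times a product of four $F$-values; writing $B(x,y)=F(x+y)/(F(x)F(y))$ and using $F(x+y+z)=F(x)F(y)F(z)B(x,y)B(x,z)B(y,z)$ together with $F(x-y)=F(x)F(y)/B(x,y)$, each product collapses to $F(\p)^2F(\q)^2F(\rr)^2F(\s)^2\,B(\p,\s)B(\q,\s)B(\rr,\s)$, which is visibly symmetric in $\p,\q,\rr$, so the three terms share a common factor $\alpha\in K^\times$ and the recurrence for $W^F$ is $\alpha$ times that for $W$. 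What your approach buys is transparency and independence from Stange's machinery; what the citation buys the authors is brevity. The one point worth flagging is your ``standard'' step: deriving biadditivity of $B$ from the parallelogram law genuinely uses that the target group is $2$-torsion free (one first gets $B(x+y,z)^2=B(x,z)^2B(y,z)^2$ and must cancel the square). This hypothesis is built into the paper's definition of a quadratic form, so your argument is consistent with their framework, but note that $K^\times$ itself is not $2$-torsion free when $-1\neq 1$ in $K$; the proposition is therefore best read as applying to maps $F$ whose associated pairing $B$ is honestly biadditive, which is the case for all quadratic forms actually used in the paper (e.g., $F_\vv(\mathbf{P})=\prod A_{ij}^{v_iv_j}$, where biadditivity holds by construction). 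Since $F(\mathbf{0})^2=1$ only forces $F(\mathbf{0})=\pm1$ without that hypothesis, your parenthetical claim $F(\mathbf{0})=1$ also relies on it, though nothing in the main computation actually needs $F(\mathbf{0})=1$.
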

\begin{proof}
    See \cite[Proposition 6.1]{Stange}.
\end{proof}
\begin{defn}
We say that two elliptic nets $W$ and $W^\prime$ are scale equivalent, if there is a quadratic form $F: A \rightarrow K$ such that $W^\prime=W^F$. 
\end{defn}
Let $\lambda_p$ be the \emph{(local) N\'{e}ron height function on $E$ associated to the prime $p$}. (See \cite[Chapter VI, Theorem{1.1}]{AECII} for properties of $\lambda_p$.)
An important property of N\'{e}ron height is that it satisfies the \emph{quasi-parallelogram law}.

\begin{lem} 
\label{quasi}
Assume that $P$, $Q \in E(\QQ)$ are two points such that $P$, $Q$, $P\pm Q\neq \infty$. Then we have  
$$\lambda_p(P+Q)+\lambda_p(P-Q)=2\lambda_p(P)+2\lambda_p(Q)+\nu_p(x(P)-x(Q))-\frac{1}{6}\nu_p(\Delta_E).$$
\end{lem}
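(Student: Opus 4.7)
The plan is to invoke the defining characterization of the Néron local height function, which the paper has already cited from \cite[Chapter VI, Theorem 1.1]{AECII}. In Silverman's formulation, $\lambda_v$ is the unique function (up to a harmless constant) on $E(K_v)\setminus\{\infty\}$ satisfying a boundedness/continuity condition, a prescribed singularity at $\infty$, and, crucially, a quasi-parallelogram law of exactly the shape
\[
\lambda_v(P+Q)+\lambda_v(P-Q)=2\lambda_v(P)+2\lambda_v(Q)+v(x(P)-x(Q))-\tfrac{1}{6}v(\Delta_E),
\]
valid for all $P,Q\in E(K_v)$ with $P,Q,P\pm Q\neq\infty$. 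The asserted lemma is precisely this axiom specialized to $K=\QQ$ and the place $v=\nu_p$, so no genuine computation is needed.

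Concretely, I would proceed in two short steps. First, observe that the hypotheses of the lemma, namely $P,Q,P\pm Q\neq\infty$, are exactly the hypotheses under which the quasi-parallelogram law from \cite[Chapter VI, Theorem 1.1]{AECII} applies; there is no ramification issue or reduction assumption to verify. Second, reconcile the normalizations: Silverman phrases the identity using $-\log|\cdot|_v$, whereas the lemma is written in terms of the $p$-adic valuation $\nu_p$. Since $-\log|x|_p=\nu_p(x)\log p$, both sides of Silverman's identity rescale by the same factor $\log p$ under passage to $\nu_p$, and $\lambda_p$ is normalized accordingly in this paper. The identity therefore transfers verbatim.

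The only real obstacle is bookkeeping of the normalization conventions. Once those are aligned, the lemma is immediate from the definition of $\lambda_p$. As an alternative, one could give a self-contained derivation by writing $\lambda_p$ in terms of the $\sigma$-function (in the archimedean case) or the theory of Néron models (in the non-archimedean case) and verifying the parallelogram law from the product expansion of $\sigma$ together with the identity $\sigma(z+w)\sigma(z-w)=\sigma(z)^2\sigma(w)^2\bigl(\wp(w)-\wp(z)\bigr)$; however, since the paper is only using $\lambda_p$ as a black box from \cite{AECII}, the cleanest route is simply to quote the relevant axiom.
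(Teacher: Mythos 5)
Your proposal is correct and takes essentially the same route as the paper: the authors also treat $\lambda_p$ as a black box and simply cite Silverman's \emph{Advanced Topics} for the quasi-parallelogram law (they point to Exercise 6.3 on page 476 rather than the characterization in Chapter VI, Theorem 1.1, but the substance is the same). Your remark about reconciling the $-\log|\cdot|_p$ versus $\nu_p$ normalization is a sensible piece of bookkeeping that the paper leaves implicit.
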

\begin{proof}
See \cite[Page 476, Exercise 6.3]{AECII}.
\end{proof}

\begin{lem}
    \label{diff quad}
    Let $E/\QQ$ defined by the polynomial \eqref{WE}, and
    assume $a_i$'s are all integers. Let $\Delta_E$ be the discriminant of $E$. Let
    $\mathbf{P}=(P_1,\ldots,P_r) \in E(\QQ)^r$ be an $r$-tuple consisting of $r$ 
    linearly independent points on $E(\QQ)$.
    Define
    \begin{align*}
        \eps(\vv) = 
        \begin{cases}
            \lambda_p(\vv \cdot \mathbf{P})-\frac{1}{12} \nu_p(\Delta_E)-\nu_p(\Psi_\vv(\mathbf{P})) & \mbox{ if $\vv \neq \mathbf{0}$,} \\
            0 & \mbox{ otherwise.} \\
        \end{cases}
    \end{align*}
    Then $\eps$ is a quadratic form from $\ZZ^r$ to $\ZZ$.
\end{lem}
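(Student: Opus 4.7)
The plan is to verify the parallelogram law \eqref{paral} for $\eps$ (thereby making it a quadratic form over $\QQ$) and then to confirm that its values land in $\ZZ$. The algebraic input comes from Lemma \ref{numerator formula}: its proof gives the identity $\Psi_\vv^2\Psi_\uu^2(X_\vv - X_\uu) = -\Psi_{\vv+\uu}\Psi_{\vv-\uu}$ in $\Frac(\RU_r)$. Evaluating at $\mathbf{P}$ and taking $\nu_p$ of both sides yields
\[
\nu_p\bigl(x(\vv\cdot\mathbf{P}) - x(\uu\cdot\mathbf{P})\bigr)
= \nu_p(\Psi_{\vv+\uu}(\mathbf{P})) + \nu_p(\Psi_{\vv-\uu}(\mathbf{P}))
- 2\nu_p(\Psi_\vv(\mathbf{P})) - 2\nu_p(\Psi_\uu(\mathbf{P})).
\]
The second ingredient is Lemma \ref{quasi} applied to $(P,Q)=(\vv\cdot\mathbf{P},\uu\cdot\mathbf{P})$. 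Substituting the valuation identity into the quasi-parallelogram law and regrouping, the $-\tfrac16\nu_p(\Delta_E)$ term cancels against the four instances of $\tfrac1{12}\nu_p(\Delta_E)$ hidden in the four $\eps$-values, and the $\Psi$-valuations rearrange to give exactly
\[
\eps(\uu+\vv)+\eps(\uu-\vv) = 2\eps(\uu)+2\eps(\vv)
\]
whenever $\uu,\vv,\uu+\vv,\uu-\vv$ are all nonzero (the regime in which every term on each side is finite, thanks to the linear independence hypothesis on $\mathbf{P}$).

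To dispose of degenerate cases I would first note that $\eps(-\vv)=\eps(\vv)$: indeed $\Psi_{-\vv}=-\Psi_\vv$ by Lemma \ref{lem prelim}(c), and $\lambda_p(-Q)=\lambda_p(Q)$ is standard for the N\'{e}ron local height. The subcases in which $\uu$ or $\vv$ equals $\mathbf{0}$ then hold trivially from $\eps(\mathbf{0})=0$. The only remaining case is $\uu=\pm\vv$, where the parallelogram law reduces to $\eps(2\vv)=4\eps(\vv)$. I would derive this by a polarization trick: pick $\ww\in\ZZ^r$ so that $\vv,\ww,\vv\pm\ww$ are all nonzero (any $\ww$ linearly independent from $\vv$ if $r\geq 2$, and $\ww=n\ee_1$ with $n\neq 0,\pm v_1$ if $r=1$), then apply the already-established parallelogram identity to the pairs $(\vv,\ww)$ and $(\vv+\ww,\vv-\ww)$ and combine to obtain $\eps(2\vv)+\eps(2\ww)=4\eps(\vv)+4\eps(\ww)$. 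Consequently $g(\vv):=\eps(2\vv)-4\eps(\vv)$ satisfies $g(\vv)=-g(\ww)$ for all generic pairs; chaining through three suitably chosen vectors forces $g\equiv 0$.

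For the integrality claim $\eps(\vv)\in\ZZ$, the quadratic form structure just established implies that $\eps$ is pinned down on all of $\ZZ^r$ by the $r(r+1)/2$ values $\eps(\ee_i)$ and $\eps(\ee_i+\ee_j)$, so it suffices to verify integrality of these. Since $\Psi_{\ee_i}(\mathbf{P})=\Psi_{\ee_i+\ee_j}(\mathbf{P})=1$ by \eqref{Psi init}, the claim reduces to showing $\lambda_p(Q)-\tfrac1{12}\nu_p(\Delta_E)\in\ZZ$ for $Q=P_i$ and $Q=P_i+P_j$. When $Q$ has non-singular reduction this difference equals $\nu_p(D_Q)$, in harmony with Proposition \ref{second-proposition}. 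At primes where $Q$ reduces to a singular point one must invoke the explicit N\'{e}ron local height formulas by Kodaira type found in \cite[Chapter VI]{AECII}; this is where the main technical obstacle lies, since the local contribution picks up rational pieces from the component group. Once these finitely many base values are shown to be integers, the $\ZZ$-valuedness of $\eps$ on all of $\ZZ^r$ follows formally from the quadratic-form formula $\eps(\vv)=\sum_i v_i^2\eps(\ee_i)+\sum_{i<j}v_iv_j(\eps(\ee_i+\ee_j)-\eps(\ee_i)-\eps(\ee_j))$.
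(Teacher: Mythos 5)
Your argument follows the paper's proof in its main line: both subtract the valuation identity supplied by Lemma \ref{numerator formula} from the quasi-parallelogram law of Lemma \ref{quasi} to obtain $\eps(\vv+\ww)+\eps(\vv-\ww)=2\eps(\vv)+2\eps(\ww)$ for $\vv,\ww,\vv\pm\ww\neq\mathbf{0}$, and both must then handle the excluded case $\ww=\pm\vv$, i.e.\ the identity $\eps(2\vv)=4\eps(\vv)$, by a separate device. Here you diverge: you polarize with an auxiliary vector $\ww$, applying the established identity to the pairs $(\vv,\ww)$ and $(\vv+\ww,\vv-\ww)$ to get $g(\vv)+g(\ww)=0$ for $g(\vv):=\eps(2\vv)-4\eps(\vv)$, then chaining through three mutually compatible vectors to force $g\equiv 0$; the paper instead stays on the single line $\ZZ\uu$, summing the instances of the parallelogram law at $(4\uu,\uu)$, $(3\uu,\uu)$ (twice) and $(2\uu,\uu)$ and comparing with the instance at $(3\uu,2\uu)$, so that everything cancels except $2\eps(2\uu)=8\eps(\uu)$. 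Both devices are correct; the paper's avoids any case split on the rank $r$, while yours is the more standard polarization argument. On the codomain, the paper's proof is silent about the assertion that $\eps$ lands in $\ZZ$, and the obstacle you flag is genuine: at a prime where $\vv\cdot\mathbf{P}$ has singular reduction, $\lambda_p(\vv\cdot\mathbf{P})-\frac{1}{12}\nu_p(\Delta_E)$ picks up a rational, generally non-integral, contribution from the component group, so the generator values $\eps(\ee_i)$ and $\eps(\ee_i+\ee_j)$ need not be integers. The honest conclusion is that $\eps$ is a quadratic form into $\QQ$ --- which is all that is needed, since the paper's definition of quadratic form only requires a $2$-torsion-free target, and in the sole application (Proposition \ref{second-proposition}) the nonsingular-reduction hypothesis reduces these base values to $\nu_p(D_Q)\in\ZZ$ exactly as you indicate. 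So your proposal reproduces everything the paper actually proves, and correctly isolates the one claim it does not.
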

\begin{proof}
        From Lemma \ref{numerator formula}, we know that
    \begin{align}
        \label{par Psi}       \nu_p(\Psi_{\vv+\ww}(\mathbf{P}))+\nu_p(\Psi_{\vv-\ww}(\mathbf{P})) = 
        2\nu_p(\Psi_\vv(\mathbf{P}))+2\nu_p(\Psi_\ww(\mathbf{P}))+
        \nu_p(X_\vv(\mathbf{P})-X_\ww(\mathbf{P})). 
    \end{align}
Now assume that $\vv, \ww, \vv\pm \ww \neq {\mathbf 0}$. Then
substituting $\vv \cdot \mathbf{P}$ and $\ww \cdot \mathbf{P}$ in Lemma \ref{quasi},
    we get
    \begin{multline}
        \label{par den}
        \lambda_p(\vv\cdot \mathbf{P}+\ww \cdot \mathbf{P})+
        \lambda_p(\vv\cdot \mathbf{P}-\ww \cdot \mathbf{P}) = 
        2\lambda_p(\vv \cdot \mathbf{P})+
        2\lambda_p(\ww \cdot \mathbf{P})+
        \nu_p(X_\vv(\mathbf{P})-X_\ww(\mathbf{P}))-\frac{1}{6} \nu_p(\Delta_E).
    \end{multline}
    Subtracting \eqref{par Psi} from \eqref{par den} we have
    \begin{align}
    \label{p-law}
        \eps(\vv+\ww)+\eps(\vv-\ww)=2\eps(\vv)+2\eps(\ww),
    \end{align}
where $\vv, \ww, \vv\pm \ww \neq \mathbf{0}$. The identity \eqref{p-law} also holds if $\vv$ or $\ww=\mathbf{0}$. So to complete the proof it is enough to show that $\eps(2\vv)=4\eps(\vv)$. In order to establish this we add copies of \eqref{p-law} for $(\vv, \ww)=(4\uu, \uu), (3\uu, \uu), (3\uu, \uu), (2\uu, \uu)$ to obtain
\begin{equation}
\label{first}
\eps(5\uu)+\eps(\uu) = 2\eps(3\uu) + 8\eps(\uu)
\end{equation}
Also letting $(\vv, \ww)=(3\uu, 2\uu)$ in \eqref{p-law} yields
\begin{equation}
\label{second}
\eps(5\uu)+\eps(\uu)=2\eps(3\uu)+2\eps(2\uu).
\end{equation}
Now subtracting \eqref{second} from \eqref{first} gives $\eps(2\uu)=4\eps(\uu).$
Thus $\eps$ is a quadratic form as desired.
\end{proof}

\section{Proof of Theorem \ref{second-theorem}} 
{\label{sec3}}
Let $\kk$ be any field and assume that $W:A \rightarrow \kk$ is an elliptic net of rank $r$.
Theorem \ref{second-theorem} claims that if $W$ has a unique
rank of apparition then $\Lambda=W^{-1}(0)$ will be a subgroup
of $A$. The goal of this section is to prove this claim.

Throughout this section assume that $W$ has a 
unique rank of apparition and let $\Bas=\{\bb_1,\bb_2,\ldots,\bb_r\}$ be a
basis for $A$ such that $W$ has a unique rank of apparition with respect
to $\Bas$.
Therefore, there exists $(\rho_1,\rho_2,\ldots,\rho_r) \in \ZZ^r$ with
$\rho_i > 1$ for $1 \leq i \leq r$ such that 
$W(n\bb_i)=0$ if and only if $n | \rho_i$.

Let
$A_i$ be the subgroup of $A$ generated by $\{\bb_1,\bb_2,\ldots,\bb_i\}$ for
$1 \leq i \leq r$ and let
\[\Lambda_i = \Lambda \cap A_i = \{ \vv \in A_i \st W(\vv)=0 \}.\]
Note that $\Lambda_i$ is the zero set of the elliptic net $W|_{A_i}:A_i \rightarrow \kk$.
By induction on $i$, 
we will prove that $\Lambda_i$ is a subgroup of $A_i$.
Note that the base case, $i=1$, is true by definition 
of unique rank of apparition.

We will prove the inductive step by proving three lemmas. 
\begin{lem} \label{lem2}
    Let $n \in \ZZ$, and let $1 \leq i \leq r$. If $\rho_i \mid n$, then we have 
    $$W(\vv+n\bb_{i}) = 0 \Longleftrightarrow \vv \in \Lambda.$$ 
\end{lem}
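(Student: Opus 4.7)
The plan is to apply the defining recurrence \eqref{net recurrence} a single time, with a carefully chosen substitution that makes the third summand vanish (thanks to $W(n\bb_i)=0$) and forces $W(\vv+n\bb_i)$ to appear as a square in the remaining identity. Concretely, I would set $\p=\bb_i$, $\q=n\bb_i$, $\rr=\vv+n\bb_i$, and $\s=\mathbf{0}$. With this choice $\q=\q+\s=n\bb_i$, so both $W(\q)$ and $W(\q+\s)$ vanish by the unique rank of apparition hypothesis, killing the entire third term of \eqref{net recurrence}. Working out the remaining factors and invoking $W(-\vv)=-W(\vv)$ from Lemma~\ref{lem prelim}(c), I expect to arrive at
\[
W((n+1)\bb_i)\,W((1-n)\bb_i)\,W(\vv+n\bb_i)^{2} \;=\; W(\vv+2n\bb_i)\,W(\vv)\,W(\bb_i)^{2}.
\]

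The next step is to observe that each of the three scalars $W(\bb_i)$, $W((n+1)\bb_i)$, and $W((1-n)\bb_i)$ is nonzero. Indeed, by the unique rank of apparition, $W(k\bb_i)=0$ iff $\rho_i\mid k$; since $\rho_i>1$ and $\rho_i\mid n$, none of $1$, $n+1$, or $1-n$ is divisible by $\rho_i$. Consequently, the assumption $W(\vv)=0$ forces the right-hand side of the displayed identity to vanish, hence $W(\vv+n\bb_i)^{2}=0$, and so $W(\vv+n\bb_i)=0$ since $\kk$ is a field. The converse implication $W(\vv+n\bb_i)=0\Rightarrow W(\vv)=0$ then comes for free: simply apply the forward implication just proved to the pair $(\vv',n')=(\vv+n\bb_i,-n)$, which still satisfies $\rho_i\mid n'$.

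The main obstacle, to my mind, is finding a substitution that can deliver the asymmetric conclusion $W(\vv+n\bb_i)=0$ rather than the weaker symmetric statement $W(\vv+n\bb_i)W(\vv-n\bb_i)=0$. The first substitution one is tempted to try, namely $(\p,\q,\rr,\s)=(\vv,n\bb_i,k\bb_i,\mathbf{0})$, respects the symmetry $\vv\leftrightarrow -\vv$ (both sides are invariant under negating $\vv$) and therefore yields only the product conclusion; moving the unknown point $\vv+n\bb_i$ into the $\rr$-slot breaks this symmetry and produces the squared factor $W(\vv+n\bb_i)^{2}$, which can then be divided through by the nonzero scalars on the $\bb_i$-axis. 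Once the substitution is in hand, the rest of the argument is just the nonvanishing observations above.
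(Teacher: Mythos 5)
Your proof is correct and follows essentially the same strategy as the paper: one application of the recurrence \eqref{net recurrence} in which the hypothesis $W(n\bb_i)=0$ kills a term and the nonvanishing of $W(k\bb_i)$ for $\rho_i\nmid k$ lets you divide out, the only differences being your particular substitution (the paper uses $(\p,\q,\rr,\s)=(\vv,-n\bb_i,\bb_i,2n\bb_i)$ for the forward direction and $(\vv,n\bb_i,\bb_i,\mathbf{0})$ for the converse) and your tidy derivation of the converse by applying the forward implication to $(\vv+n\bb_i,-n)$ instead of a second substitution. I verified your displayed identity and the nonvanishing claims; everything checks out.
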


\begin{proof} 
    First let $ \vv\in \Lambda$. Taking $\p=\vv$, $\q = -n\bb_{i},$ $\rr =
    \bb_{i},$ and $\s = 2n\bb_{i}$ in \eqref{net recurrence} yields
    \begin{equation} \label{zero id 1} 
        W(\vv+n\bb_{i})^{2}W((2n+1)\bb_{i})W(\bb_{i}) = 0.
    \end{equation}
    Note that since $\rho_i \mid n$ and $\rho_{i}>1$, 
    we have $\rho_i \nmid (2n+1)$ and so $W((2n+1)\bb_{i})\neq 0$. 
    Thus, from \eqref{zero id 1}, we have $W(\vv+n\bb_{i}) = 0$ for all $\vv \in \Lambda$.

    Conversely assume that $\vv \notin \Lambda$. Then taking $\p=\vv$, 
    $\q = n\bb_{i},$ $\rr = \bb_{i},$ and $\s= \mathbf{0}$ in \eqref{net recurrence} yields
    \begin{equation} \label{zero id 2} 
        W(\vv+n\bb_{i})W(\vv-n\bb_{i})W(\bb_{i})^{2} + W((n+1)\bb_{i})W((n-1)\bb_{i})W(\vv)^{2} = 0.
    \end{equation}
    Since $\vv \notin \Lambda$ and $\rho_{i}\mid n$, we have
    $W((n+1)\bb_{i})W((n-1)\bb_{i})W(\vv)^{2} \neq 0$. It therefore follows,
    from \eqref{zero id 2}, that $W(\vv+n\bb_{i}) \neq 0$ for all $\vv \notin \Lambda$.  
\end{proof} 
The following is a
straightforward consequence of Lemma \ref{lem2}.  

\begin{cor} We have
    $$\{n_1\bb_1+n_2\bb_2+\dots+n_r\bb_r \st \rho_i \mid n_i ~\mbox{for}~1\leq i \leq r\} \subseteq
\Lambda.$$ 
\end{cor}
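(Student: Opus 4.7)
The plan is to prove the corollary by a short induction on the index $i$, applying Lemma \ref{lem2} at each step with the vector that has been assembled so far.

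The base of the induction is $\mathbf{0} \in \Lambda$, which holds since $W(\mathbf{0}) = 0$ by the definition of an elliptic net. For the inductive step, suppose that $\vv_{i-1} := n_1\bb_1 + n_2\bb_2 + \cdots + n_{i-1}\bb_{i-1}$ lies in $\Lambda$, where $\rho_j \mid n_j$ for each $j \leq i-1$. Given $n_i$ with $\rho_i \mid n_i$, I would apply Lemma \ref{lem2} with the roles $\vv = \vv_{i-1}$, $\bb_i = \bb_i$, $n = n_i$: since $\vv_{i-1} \in \Lambda$ and $\rho_i \mid n_i$, the forward direction of the lemma immediately gives
\[ W(\vv_{i-1} + n_i \bb_i) = 0, \]
so that $\vv_i := \vv_{i-1} + n_i\bb_i \in \Lambda$.

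Iterating this argument for $i = 1, 2, \dots, r$ produces $\vv_r = n_1\bb_1 + \cdots + n_r\bb_r \in \Lambda$, which is exactly the containment claimed. There is no real obstacle: the corollary is a direct application of Lemma \ref{lem2} and the only point to verify is that at each stage the ``base vector'' $\vv_{i-1}$ does indeed lie in $\Lambda$, which is ensured by the inductive hypothesis.
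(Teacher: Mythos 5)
Your proof is correct and matches the paper's intent: the paper gives no explicit argument, stating only that the corollary is a ``straightforward consequence of Lemma \ref{lem2}'', and your induction---starting from $W(\mathbf{0})=0$ and repeatedly applying the forward direction of Lemma \ref{lem2} to append each $n_i\bb_i$---is exactly that straightforward consequence.
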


\begin{lem} \label{lem3} 
    Suppose that for a fixed $i>1$ we have that
    $\Lambda_{i-1}$ is a subgroup of $A$. 
    Then for all $\vv \in \Lambda_{i-1}$, we have 
    $$W(\vv + n\bb_{i}) = 0 \Longleftrightarrow \rho_{i} \mid n.$$ 
\end{lem}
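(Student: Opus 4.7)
The forward implication is immediate from Lemma~\ref{lem2}: if $\rho_i \mid n$ and $\vv \in \Lambda_{i-1} \subseteq \Lambda$, then $W(\vv + n\bb_i) = 0$. For the converse, the case $\vv = \mathbf{0}$ reduces to the very definition of $\rho_i$ as the unique rank of apparition of $(W(n\bb_i))$, so the substantive content concerns $\vv \in \Lambda_{i-1} \setminus \{\mathbf{0}\}$.

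The plan is to leverage the inductive hypothesis that $\Lambda_{i-1}$ is a subgroup of $A$: this immediately gives $k\vv \in \Lambda_{i-1}$, hence $W(k\vv) = 0$, for every $k \in \ZZ$. I would then substitute $\p = \vv$, $\q = n\bb_i$, $\rr = \bb_i$, $\s = \mathbf{0}$ into the elliptic net recurrence \eqref{net recurrence}. The first term $W(\vv+n\bb_i)W(\vv-n\bb_i)W(\bb_i)^2$ vanishes by hypothesis, and the second term $W((n+1)\bb_i)W((n-1)\bb_i)W(\vv)^2$ vanishes since $W(\vv) = 0$. What survives is the identity
\[ W(\vv + \bb_i)\, W(\bb_i - \vv)\, W(n\bb_i)^2 = 0. \]
Provided $W(\vv \pm \bb_i) \neq 0$, this forces $W(n\bb_i) = 0$, i.e.\ $\rho_i \mid n$, completing the proof.

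The main obstacle is therefore to establish the nonvanishing $W(\vv \pm \bb_i) \neq 0$ for every $\vv \in \Lambda_{i-1} \setminus \{\mathbf{0}\}$. By replacing $\vv$ with $-\vv \in \Lambda_{i-1}$ it suffices to rule out $W(\vv+\bb_i)=0$, and I would proceed by contradiction. If $W(\vv+\bb_i)=0$, then the restriction of $W$ to the cyclic subgroup $\ZZ(\vv+\bb_i)$, which is an elliptic sequence by Lemma~\ref{lem prelim}(b), has first term zero and hence admits no unique rank of apparition, so Ward's Theorem~\ref{Ward6.2} forces $W(3\vv+3\bb_i) = W(4\vv+4\bb_i) = 0$. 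Combined with the specialization $\p = \vv+\bb_i,\ \q = \vv,\ \rr = \bb_i,\ \s = \mathbf{0}$ of \eqref{net recurrence} — whose second and third terms vanish through $W(\vv)$ and $W(\vv+\bb_i)$, leaving $W(2\vv+\bb_i)W(\bb_i)^3 = 0$ and hence $W(2\vv+\bb_i)=0$ — these newly generated zeros can be reinserted into the recurrence identity derived in the previous paragraph, applied now with $\vv$ replaced by $3\vv$ or $4\vv$ and with matching choices of $n$. One expects iteration of this procedure to propagate the zero set along $\ZZ\bb_i$ until $W(m\bb_i) = 0$ is forced for some $m$ with $\rho_i \nmid m$, contradicting the unique rank of apparition hypothesis. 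The delicate step is the careful bookkeeping across the small cases $\rho_i \in \{2,3,4\}$, where certain of the intermediate values $W(k\bb_i)$ vanish automatically and weaken the disjunctions produced by the recurrence, so these subcases require separate attention.
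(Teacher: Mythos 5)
Your forward direction and the overall shape of the converse match the paper, but there is a genuine gap in the converse: everything hinges on the nonvanishing $W(\vv\pm\bb_i)\neq 0$ for $\vv\in\Lambda_{i-1}$, and you do not actually establish it. Note that this nonvanishing is precisely the $n=\pm 1$ instance of the lemma you are proving (since $\rho_i>1$), so it cannot be assumed, and your proposed route to it --- restricting to $\ZZ(\vv+\bb_i)$, invoking Theorem~\ref{Ward6.2} to get $W(3\vv+3\bb_i)=W(4\vv+4\bb_i)=0$, and then ``iterating'' --- is left as a sketch. The iteration does not close: feeding $3\vv$ back into your identity only yields the disjunction $W(3\vv+\bb_i)W(\bb_i-3\vv)W(3\bb_i)^2=0$, which produces further zeros of the form $W(k\vv\pm\bb_i)$ rather than a zero $W(m\bb_i)$ with $\rho_i\nmid m$, and you yourself flag the small cases $\rho_i\in\{2,3,4\}$ as unresolved. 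As written, the proof is incomplete.

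The source of the difficulty is your choice of auxiliary element $\rr=\bb_i$, which places the products $W(\vv\pm\rr)$ at points \emph{outside} $A_{i-1}$, where the inductive hypothesis gives no control. The paper instead takes $\rr\in A_{i-1}\setminus\Lambda_{i-1}$ (nonempty, e.g.\ $\bb_1$ works since $\rho_1>1$) with the same $\p=\vv$, $\q=n\bb_i$, $\s=\mathbf{0}$; the recurrence then reduces to
\begin{equation*}
W(\vv+n\bb_i)\,W(\vv-n\bb_i)\,W(\rr)^2+W(\rr+\vv)\,W(\rr-\vv)\,W(n\bb_i)^2=0,
\end{equation*}
and now $\vv\pm\rr\in A_{i-1}\setminus\Lambda_{i-1}$ follows \emph{immediately} from the subgroup property of $\Lambda_{i-1}$, so $W(\vv\pm\rr)\neq 0$. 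Reading the identity in the contrapositive direction ($\rho_i\nmid n$ forces the second summand, hence the first, to be nonzero) finishes the proof with no appeal to Ward's theorem and no case analysis. Replacing your $\rr=\bb_i$ with such an $\rr$ repairs the argument.
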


\begin{proof} 
    Choose $\vv \in \Lambda_{i-1}$.
    Since $\vv \in \Lambda_{i-1} \subset \Lambda$, it
    follows from Lemma \ref{lem2} that if $\rho_{i}\mid n$ then 
    $W(\vv+n\bb_{i}) = 0$. 
    Conversely, let $\rho_{i} \nmid n$, taking $\p=\vv$, $\q = n\bb_{i},$
    $\rr \in A_{i-1} \setminus \Lambda_{i-1},$ and
    $\s = \mathbf{0}$ in \eqref{net recurrence} yields
    \begin{equation} \label{star} 
        W(\vv+n\bb_{i})W(\vv-n\bb_{i})W(\rr)^{2} +
        W(\rr+\vv)W(\rr-\vv)W(n\bb_{i})^{2} = 0.  
    \end{equation} 
    Since $\vv \in \Lambda_{i-1},$  $\rr \in A_{i-1} \setminus \Lambda_{i-1}$, and
    $\Lambda_{i-1}$ is a subgroup, it follows that $\vv \pm \rr \in A_{i-1}
    \setminus \Lambda_{i-1}$, hence $W(\vv\pm \rr) \neq 0$. It therefore follows
    from \eqref{star} that $W(\vv+n\bb_{i}) \neq 0$.
\end{proof}

\begin{lem} \label{lem4}
    Suppose that $\Lambda_{i-1}$ is a subgroup of $A$ for a fixed $i>1$ and $\rho_{i} > 2$.
    Let $\uu, \vv \in \Lambda_{i}$ such that $\uu = \uu_{0} + n\bb_{i},$ 
    and $\vv = \vv_{0} + n\bb_{i}$ for $\uu_{0}, \vv_{0} \in A_{i-1}$. 
    Then $\uu-\vv=\uu_0-\vv_0 \in \Lambda_{i-1}.$ 
\end{lem}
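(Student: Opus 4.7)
The goal is to establish $W(\uu_0-\vv_0)=0$, which---because $\uu_0-\vv_0 \in A_{i-1}$---is exactly the conclusion $\uu-\vv = \uu_0-\vv_0 \in \Lambda_{i-1}$. My plan is to split on whether $\rho_i \mid n$.

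The case $\rho_i \mid n$ is straightforward: Lemma~\ref{lem2} applied to each of $\uu=\uu_0+n\bb_i$ and $\vv=\vv_0+n\bb_i$ gives $\uu_0,\vv_0\in\Lambda$, hence both lie in $\Lambda_{i-1}$, and the inductive hypothesis that $\Lambda_{i-1}$ is a subgroup finishes the argument.

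Assume now that $\rho_i \nmid n$. I would apply the net recurrence \eqref{net recurrence} with $(\p,\q,\rr,\s)=(\uu_0,\vv_0,n\bb_i,\mathbf{0})$. Since $W(\vv_0+n\bb_i)=W(\vv)=0$ and $W(\uu_0+n\bb_i)=W(\uu)=0$, two of the three terms collapse, leaving
\[ W(\uu_0+\vv_0)\,W(\uu_0-\vv_0)\,W(n\bb_i)^2=0. \]
Because $\rho_i\nmid n$, $W(n\bb_i)\neq 0$, so either $W(\uu_0-\vv_0)=0$ (and we are done) or $\uu_0+\vv_0\in\Lambda_{i-1}$, which is the remaining subcase.

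To handle this subcase I would apply the net recurrence a second time, now with $(\p,\q,\rr,\s)=(\uu_0+k\bb_i,\vv_0+k\bb_i,(n-k)\bb_i,\mathbf{0})$ for a suitably chosen integer $k$. The same cancellations $W(\q+\rr)=W(\vv)=0$ and $W(\rr+\p)=W(\uu)=0$ reduce the identity to
\[ W(\uu_0+\vv_0+2k\bb_i)\,W(\uu_0-\vv_0)\,W((n-k)\bb_i)^2=0. \]
Because $\uu_0+\vv_0\in\Lambda_{i-1}$, Lemma~\ref{lem3} tells us that $W(\uu_0+\vv_0+2k\bb_i)\neq 0$ precisely when $\rho_i\nmid 2k$, while the unique rank of apparition in the $\bb_i$-direction gives $W((n-k)\bb_i)\neq 0$ precisely when $\rho_i\nmid(n-k)$. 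The main obstacle---and the sole place the hypothesis $\rho_i>2$ is used---is selecting $k$ so that both inequalities hold simultaneously. The residues of $k$ modulo $\rho_i$ that are forbidden form the set $\{0,n\}$ when $\rho_i$ is odd and $\{0,\rho_i/2,n\}$ when $\rho_i$ is even, so at most three classes are ruled out; since $\rho_i>2$ forces $\rho_i\geq 3$ in the odd case and $\rho_i\geq 4$ in the even case, an admissible $k$ always exists. Any such choice of $k$ then forces $W(\uu_0-\vv_0)=0$, completing the proof.
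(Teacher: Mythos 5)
Your proof is correct. It shares the paper's overall skeleton---first force $W(\uu_0+\vv_0)W(\uu_0-\vv_0)=0$, then dispose of the possibility $W(\uu_0+\vv_0)=0$---but the execution differs at both stages. The paper avoids your initial split on $\rho_i\mid n$ by substituting $(\p,\q,\rr,\s)=(\uu_0+n\bb_i,\vv_0+n\bb_i,m\bb_i,-2n\bb_i)$ and choosing $m\in\{1,2\}$ with $W((2n-m)\bb_i)W(m\bb_i)\neq 0$; your substitution $(\uu_0,\vv_0,n\bb_i,\mathbf{0})$ needs $W(n\bb_i)\neq 0$, so the prior reduction to $\rho_i\nmid n$ via Lemma \ref{lem2} is essential, and you supply it. In the second stage the paper argues by contradiction: it shows $W(\uu_0+\vv_0+2n\bb_i)=0$, deduces $\rho_i\mid 2n$ from Lemma \ref{lem3}, and in the subcase $\rho_i\nmid n$ derives $\rho_i\mid n-1$ and $\rho_i\mid n+1$, forcing $\rho_i=2$. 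Your single parametrized substitution with $\rr=(n-k)\bb_i$ plus the count of forbidden residues of $k$ modulo $\rho_i$ reaches $W(\uu_0-\vv_0)=0$ directly, with $\rho_i>2$ entering only through the pigeonhole bound; this is a somewhat cleaner route. I checked the two collapses of \eqref{net recurrence} you assert, the passage from $W(\uu_0+\vv_0)=0$ and $\uu_0+\vv_0\in A_{i-1}$ to $\uu_0+\vv_0\in\Lambda_{i-1}$ (needed to invoke Lemma \ref{lem3}), and the residue count; all are sound.
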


\begin{proof}

Setting $\p = \uu_{0} + n\bb_{i},$ 
$\q = \vv_{0} + n\bb_{i}$, $\rr = m\bb_{i},$ and $\s = -2n\bb_{i}$ in 
\eqref{net recurrence} gives 
\begin{equation} \label{pplusq}
W(\uu_{0} + \vv_{0})W(\uu_{0}-\vv_{0})W((2n-m)\bb_{i})W(m\bb_{i}) = 0.  
\end{equation}
Since $\rho_{i} > 2$, we have $W(b_i),~W(2b_i)\neq 0$.  So we can choose $m \in
\{1,2\}$ such that 
$$W((2n-m)\bb_{i})W(m\bb_{i}) \neq 0.$$ 
Thus from \eqref{pplusq}
we conclude that $W(\uu_{0} + \vv_{0})W(\uu_{0}-\vv_{0}) = 0$. 
Now if $W(\uu_{0}-\vv_{0}) = 0$ we are done.  Otherwise we assume that
$W(\uu_{0}-\vv_{0}) \neq 0$, hence $W(\uu_{0} + \vv_{0}) = 0$, and show that this gives
a contradiction. 

Setting $\p = \uu_{0} + n\bb_{i},$ $\q = \vv_{0} + n\bb_{i}$, $\rr = \bb_{i}$, 
and $\s=\mathbf{0}$ in
\eqref{net recurrence} gives 
$$W(\uu_{0}+\vv_{0} + 2n\bb_{i})W(\uu_{0}-\vv_{0})W(\bb_{i})^{2} = 0,$$ 
hence $W(\uu_{0}+\vv_{0} + 2n\bb_{i}) = 0$
(recall that $W(\uu_{0}-\vv_{0}) \neq 0$). Since $\uu_0+\vv_0\in \Lambda_{i-1}$ it
follows from Lemma \ref{lem3} that $\rho_{i}\mid 2n$. Now we consider two
cases.
 
Case 1: If $\rho_{i} \mid n$, then since 
$\uu = \uu_{0} +n\bb_{i}, \vv = \vv_{0}+n\bb_{i} \in \Lambda_{i}$, 
it follows from Lemma \ref{lem2} that
$\uu_{0}, \vv_{0} \in \Lambda_{i-1}$, hence $\uu_{0} -\vv_{0} \in \Lambda_{i-1}$,
contradicting our assumption that $W(\uu_{0} - \vv_{0}) \neq 0.$ 

Case 2: If $\rho_{i} \nmid n$, then $W(\uu_{0}+\vv_{0}+n\bb_{i}) \neq 0$ by Lemma
\ref{lem3}. Setting $\p = \uu_{0} + n\bb_{i}$, $\q = \vv_{0} + n\bb_{i}$, $\rr=\bb_{i},$
and $\s = -n\bb_{i}$ in \eqref{net recurrence} gives
$$W(\uu_{0}+\vv_{0}+n\bb_{i})W(\uu_{0}-\vv_{0})W((n-1)\bb_{i})W(\bb_{i}) = 0,$$ 
hence
$W((n-1)\bb_{i}) = 0$ and so $\rho_i \mid n-1$. Similarly by setting 
$\p = \uu_{0} + n\bb_{i},$ $\q = \vv_{0} + n\bb_{i}$, $\rr=-\bb_{i},$ and
$\s = -n\bb_{i}$ in \eqref{net recurrence}
we find that $W((n+1)\bb_{i}) = 0$ and so $\rho_i \mid n+1$. Since $\rho_i\mid
n-1$ and $\rho_i\mid n+1$, we have $\rho_{i} = 2$. This is a contradiction.
\end{proof}

We are ready to prove our main result on zeros of an elliptic net.

\begin{proof}[Proof of Theorem \ref{second-theorem}] 
    We proceed by induction on $i$. Note that $\Lambda_{1}$ is a subgroup of $\bb_1\ZZ$, since $W(n\bb_{1}) = 0$ 
    if and only if $\rho_{1}|n$.
    Assume that $\Lambda_{i-1}$ is a subgroup.
    We want to prove that $\Lambda_i$ is a subgroup, that is
    for any $\uu,\vv \in \Lambda_i$ that $\uu-\vv \in \Lambda_i$.
    We will prove this by contradiction, so assume that $\uu - \vv \not \in \Lambda_i$.
    Let $\uu=\uu_0+n\bb_i,~\vv=\vv_0+m\bb_i \in \Lambda_{i}$, 
    where $\uu_0, \vv_0\in A_{i-1}$.
    It follows from \eqref{net recurrence}, for $\p=\uu$, $\q=\vv$, $\rr=\uu+\w$, and $\s=-2\uu$,
    that  $W(\uu-\vv)^2W(\uu-\w)W(\uu+\w) = 0.$ Since $W(\uu-\vv)\neq 0$ and  
    $\uu=\uu_0+n\bb_i$, we conclude that  
    \begin{equation} \label{zero id 3} 
        W(\uu_0+n\bb_{i}-\w)W(\uu_0+n\bb_{i}+\w)=0  
    \end{equation} 
    for any $\w \in A_i$.  We claim that \eqref{zero id 3} implies
    that $\rho_{i} \mid n$.
    To show this assume otherwise that $\rho_{i} \nmid n$. Then, since 
    $\uu = \uu_{0}+n\bb_{i} \in \Lambda_{i}$ it follows from Lemma \ref{lem3} that
    $\uu_0\not\in \Lambda_{i-1}$. We consider two cases.

    Case 1: If $\rho_{i} > 2$, then setting $\w = \uu_0$ in \eqref{zero id 3} yields
    $$W(2\uu_{0}+n\bb_{i})W(n\bb_{i}) = 0.$$ 
    Then we have that $W(2\uu_{0}+n\bb_{i}) =0$ since $\rho_i \nmid n$. Since
    $W(\uu_{0}+n\bb_{i}) = W(2\uu_{0}+n\bb_{i}) = 0$, it follows from 
    Lemma \ref{lem4} that $\uu_{0} \in \Lambda_{i-1}$. This is a contradiction. 
     
    Case 2: If $\rho_{i} = 2$, then setting $\w = \bb_{i}$ in \eqref{zero id 3} yields
    $$W(\uu_{0} + (n+1)\bb_{i})W(\uu_{0} + (n-1)\bb_{i}) = 0,$$ from which it follows that
    $\uu_{0} \in \Lambda_{i-1}$ (since both $n-1$ and $n+1$ are even). This is a
    contradiction. 

    In either case, the assumption $\rho_i \nmid n$ leads to a contradiction. Thus, we have 
    $\uu = \uu_{0} + n\bb_{i}$ with $\uu_{0} \in \Lambda_{i-1}$, and $\rho_{i} \mid n$.
    Similarly we have $\vv = \vv_{0} + m\bb_{i}$, with $\vv_{0} \in \Lambda_{i-1}$ and
    $\rho_{i}|m$. Then, $\uu-\vv = \uu_{0} - \vv_{0} + (n-m)\bb_{i}$ with 
    $\uu_{0} - \vv_{0} \in \Lambda_{i-1}$, and $\rho_{i}\mid (n-m)$. 
    Thus it follows from Lemma \ref{lem3}
    that $W(\uu-\vv) = 0$. This is a contradiction as we assumed that $W(\uu-\vv)\neq 0$.  
     
    Since the assumption $\uu-\vv\not\in \Lambda_i$ leads to a contradiction, we
    conclude that $\uu-\vv\in \Lambda_i$ and so $\Lambda_i$ is a subgroup of $A$.  
\end{proof}

\section{Proofs of Theorem \ref{Ward-generalized} and Corollary \ref{cor-13}}
\label{sec4}

Theorem \ref{second-theorem} shows that for a given elliptic net $W:A \rightarrow \kk$,
in favorable conditions, if $W(\bm{\lambda}_1)=W(\bm{\lambda}_2)=0$ then $W(\bm{\lambda}_1+\bm{\lambda}_2)=0$.
In this section we study the relation between  $W(\vv+\bm{\lambda})$ and $W(\vv)$ when $W(\bm{\lambda})=0$
but $W(\vv)$ is non-zero.
Throughout this section we assume that $\Lambda=W^{-1}(0)$ is a subgroup of
$A$. We also assume that $|A/\Lambda| \geq 4$.
The results of this section generalizes Theorem \ref{Ward sym}
to Elliptic nets.
In order to do this, we first define the auxiliary function 
$$
    \begin{array}{cccc} 
        \delta: & \Lambda \times (A \setminus \Lambda) & \longrightarrow & \kk^\times \\ 
              & (\bm{\lambda}, \vv) & \longmapsto & \frac{W(\bm{\lambda}+\vv)}{W(\vv)}, 
    \end{array}
$$ 
and explore the properties of $\delta$. Notice that for $\bm{\lambda} \in \Lambda$ and
$\vv \notin \Lambda$ we get that $\delta(\bm{\lambda},\vv) \neq 0$.
We have the following lemma.

\begin{lem} \label{sigeqn} 
    For all $\bm{\lambda} \in \Lambda$, and 
    $\mathbf{a},\mathbf{b},\mathbf{c},\mathbf{d} \in A \backslash  \Lambda$ 
    with $\mathbf{a}+\mathbf{b} = \mathbf{c}+\mathbf{d}$, we have 
    $$ \delta(\bm{\lambda},\mathbf{a}) \delta(\bm{\lambda}, \mathbf{b}) = 
    \delta(\bm{\lambda}, \mathbf{c}) \delta(\bm{\lambda}, \mathbf{d}).$$

\end{lem}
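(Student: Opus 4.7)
The plan is to read the identity $\delta(\bm{\lambda},\mathbf{a})\delta(\bm{\lambda},\mathbf{b}) = \delta(\bm{\lambda},\mathbf{c})\delta(\bm{\lambda},\mathbf{d})$ as the polynomial equation
$$W(\bm{\lambda}+\mathbf{a})W(\bm{\lambda}+\mathbf{b})W(\mathbf{c})W(\mathbf{d}) = W(\bm{\lambda}+\mathbf{c})W(\bm{\lambda}+\mathbf{d})W(\mathbf{a})W(\mathbf{b}),$$
i.e.\ a difference of two four-fold products of $W$-values. Since the elliptic net recurrence \eqref{net recurrence} is itself an alternating sum of three such four-fold products, the strategy is to specialize $\p,\q,\rr,\s$ so that one summand vanishes because it carries the factor $W(\bm{\lambda})$, while the other two summands reproduce exactly the two sides of the identity displayed above.

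Concretely, I would substitute
$$\p=\mathbf{b},\qquad \q=\mathbf{b}-\mathbf{d},\qquad \rr=\mathbf{b}+\bm{\lambda},\qquad \s=\mathbf{a}-\mathbf{b}$$
into \eqref{net recurrence}. Because $\rr-\p=\bm{\lambda}$, the factor $W(\rr-\p)$ is zero and the third summand drops out. Using $\mathbf{c}=\mathbf{a}+\mathbf{b}-\mathbf{d}$, which is precisely the hypothesis $\mathbf{a}+\mathbf{b}=\mathbf{c}+\mathbf{d}$, the first summand evaluates to $W(\mathbf{c})W(\mathbf{d})W(\bm{\lambda}+\mathbf{a})W(\bm{\lambda}+\mathbf{b})$, and the second evaluates to $-W(\bm{\lambda}+\mathbf{c})W(\bm{\lambda}+\mathbf{d})W(\mathbf{a})W(\mathbf{b})$ after invoking the oddness $W(-\vv)=-W(\vv)$ from Lemma \ref{lem prelim}(c) to rewrite $W(\q-\rr)=W(-\mathbf{d}-\bm{\lambda})=-W(\bm{\lambda}+\mathbf{d})$. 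Adding the surviving two summands and dividing through by $W(\mathbf{a})W(\mathbf{b})W(\mathbf{c})W(\mathbf{d})$, which is nonzero because $\mathbf{a},\mathbf{b},\mathbf{c},\mathbf{d}\notin\Lambda$, delivers the claim.

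I do not anticipate a serious obstacle here; the only step that requires any thought is finding the correct specialization. It is essentially forced once one commits to killing a summand via the factor $W(\bm{\lambda})=0$ and insists that the surviving arguments lie among $\{\mathbf{a},\mathbf{b},\mathbf{c},\mathbf{d},\bm{\lambda}+\mathbf{a},\bm{\lambda}+\mathbf{b},\bm{\lambda}+\mathbf{c},\bm{\lambda}+\mathbf{d}\}$, and the hypothesis $\mathbf{a}+\mathbf{b}=\mathbf{c}+\mathbf{d}$ is exactly the numerical coincidence that makes such a substitution solvable.
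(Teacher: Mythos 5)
Your substitution checks out (with $\p=\mathbf{b}$, $\q=\mathbf{b}-\mathbf{d}$, $\rr=\mathbf{b}+\bm{\lambda}$, $\s=\mathbf{a}-\mathbf{b}$ the three summands become $W(\mathbf{c})W(\mathbf{d})W(\bm{\lambda}+\mathbf{a})W(\bm{\lambda}+\mathbf{b})$, $-W(\bm{\lambda}+\mathbf{c})W(\bm{\lambda}+\mathbf{d})W(\mathbf{a})W(\mathbf{b})$, and $0$), and this is essentially the paper's argument: the paper simply takes $\rr=\bm{\lambda}$ with $\p,\q,\s$ generic so that the first summand dies via $W(\rr)=0$, then sets $\mathbf{a}=\q+\s$, $\mathbf{b}=-\q$, $\mathbf{c}=\p+\s$, $\mathbf{d}=-\p$. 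Same idea, slightly different parametrization; the proof is correct.
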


\begin{proof} 
    Assume that $\p+\s, \p, \q+\s, \q \notin \Lambda$. Then, setting 
    $\rr = \bm{\lambda}$ in \eqref{net recurrence} gives 
    \begin{equation*}
        W(\bm{\lambda}+\q+\s)W(\bm{\lambda}-\q)W(\p+\s)W(-\p) = W(\bm{\lambda}+\p+\s)W(\bm{\lambda} - \p)W(\q+\s)W(-\q).  
    \end{equation*} 
    Since $\p+\s, \p, \q+\s,\q \notin \Lambda$ we have $W(\p+\s)W(\p)W(\q+\s)W(\q) \neq 0$, hence
    $$
        \frac{W(\bm{\lambda}+\q+\s)W(\bm{\lambda}-\q)}{W(\q+\s)W(-\q)} = 
        \frac{W(\bm{\lambda}+\p+\s)W(\bm{\lambda}-\p)}{W(\p+\s)W(-\p)}.
    $$ 
    Thus 
    $$
        \delta(\bm{\lambda}, \q+\s) \delta(\bm{\lambda}, -\q) = \delta(\bm{\lambda}, \p+\s)\delta(\bm{\lambda}, -\p).
    $$ 
    Taking 
    $$ 
        \mathbf{a}=\q+\s,~
        \mathbf{b}=-\q,~ 
        \mathbf{c}=\p+\s,~{\rm and}~ 
        \mathbf{d}=-\p,
    $$ 
    yields the result.  
\end{proof}

Note that if $\vv,\p_1,\p_2 \in A$ and $\p_1,\p_2,\vv+\p_1,\vv+\p_2 \not \in \Lambda$,
then
$$\delta(\bm{\lambda},\vv+\p_1)\delta(\bm{\lambda},\p_2)=\delta(\bm{\lambda},\vv+\p_2)\delta(\bm{\lambda},\p_1).$$
Since $\delta$ is nonzero, we get
\begin{equation}
    \label{eqn well defined}
    {\delta(\bm{\lambda},\vv+\p_1) \over \delta(\bm{\lambda},\p_1)} = 
    {\delta(\bm{\lambda},\vv+\p_2) \over \delta(\bm{\lambda},\p_2)}. 
\end{equation}
Since we are assuming that $|A/\Lambda|\geq 4$, we get that
for any $\vv \in A$ there is an
an element $\p \in A$ so that $\p$ and $\vv+\p$ are in $A \setminus \Lambda$.
In light of this observation, we define the function $\chi$ by
\begin{equation}
    \label{eqn chi}
    \begin{array}{cccc} 
        \chi: & \Lambda \times A & \longrightarrow & \kk^\times \\
        & (\bm{\lambda}, \vv) & \longmapsto & \frac{\delta(\bm{\lambda}, \vv+\p)}{\delta(\bm{\lambda}, \p)},
    \end{array}
\end{equation}
for any choice of $\p$ with $\p, \vv+\p \notin \Lambda$.
Equation \eqref{eqn well defined} shows that this definition is independent
of the choice of $\p$. Furthermore, note that $\delta$ is non-zero, so $\chi$ maps to $\kk^\times$.

We now show that $\chi$ is a bilinear map.

\begin{lem}\label{chi lem} 
    Let $W:A \rightarrow \kk$ be an elliptic net, and $\Lambda=W^{-1}(0)$ be a subgroup
    of $A$ such that $|A/\Lambda| \geq 4$. Let $\chi:\Lambda\times A \rightarrow \kk^\times$ 
    be defined as before.
    Then for  $\bm{\lambda}, \bm{\lambda}_{1}, \bm{\lambda}_{2} \in \Lambda,$ and  
    $\vv, \vv_{1}, \vv_{2} \in A$, we have the following:
    \begin{enumerate}[(i)] 
        \item $\chi(\bm{\lambda}, \vv_{1} + \vv_{2}) = \chi(\bm{\lambda}, \vv_{1})\chi(\bm{\lambda}, \vv_2).$ 
        \item $\chi(\bm{\lambda}_{1} + \bm{\lambda}_{2}, \vv) = \chi(\bm{\lambda}_{1}, \vv)\chi(\bm{\lambda}_{2}, \vv).$ 
        \item $\chi(\bm{\lambda}_{1}, \bm{\lambda}_{2}) = \chi(\bm{\lambda}_{2}, \bm{\lambda}_{1}).$ 
        \item\label{chi inverse} $\chi(\bm{\lambda}, -\vv) = \chi(\bm{\lambda}, \vv)^{-1}.$ 
    \end{enumerate} 
\end{lem}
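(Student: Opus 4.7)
The plan is to prove each part of the lemma by carefully unwrapping the definition of $\chi$ through $\delta$, and to use the hypothesis $|A/\Lambda|\geq 4$ only for the existence of suitable ``base points'' $\p$ avoiding $\Lambda$ together with certain translates. Throughout, $\Lambda$ being a subgroup means that for any $\bm{\lambda}\in\Lambda$ and $\vv\in A$, the element $\vv+\bm{\lambda}$ lies in the same coset as $\vv$; in particular $\vv+\bm{\lambda}\notin\Lambda$ iff $\vv\notin\Lambda$.

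For part (i), I would choose $\p\in A$ with $\p$, $\vv_1+\p$, $\vv_2+\p$, $\vv_1+\vv_2+\p$ all outside $\Lambda$, which is possible because $|A/\Lambda|\geq 4$ forces four distinct cosets to be available. Then by the well-definedness of $\chi$ (equation \eqref{eqn well defined}) I may evaluate
\[
\chi(\bm{\lambda},\vv_1)=\frac{\delta(\bm{\lambda},\vv_1+(\vv_2+\p))}{\delta(\bm{\lambda},\vv_2+\p)},\qquad \chi(\bm{\lambda},\vv_2)=\frac{\delta(\bm{\lambda},\vv_2+\p)}{\delta(\bm{\lambda},\p)},
\]
and multiply; the middle factor $\delta(\bm{\lambda},\vv_2+\p)$ cancels and the product equals $\delta(\bm{\lambda},\vv_1+\vv_2+\p)/\delta(\bm{\lambda},\p)=\chi(\bm{\lambda},\vv_1+\vv_2)$.

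For part (ii), I would first observe the ``cocycle'' identity
\[
\delta(\bm{\lambda}_1+\bm{\lambda}_2,\ww)=\delta(\bm{\lambda}_1,\bm{\lambda}_2+\ww)\,\delta(\bm{\lambda}_2,\ww)
\]
which follows by writing both sides as $W(\bm{\lambda}_1+\bm{\lambda}_2+\ww)/W(\ww)$; this is legitimate whenever $\ww,\bm{\lambda}_2+\ww\notin\Lambda$, and since $\Lambda$ is a subgroup these are equivalent. Choosing $\p$ with $\p,\vv+\p\notin\Lambda$, applying the cocycle identity with $\ww=\vv+\p$ and with $\ww=\p$, and taking the ratio, the factor $\delta(\bm{\lambda}_1,\bm{\lambda}_2+\p)$ in the denominator together with $\delta(\bm{\lambda}_1,\bm{\lambda}_2+\vv+\p)$ in the numerator assemble into $\chi(\bm{\lambda}_1,\vv)$ (valid because $\bm{\lambda}_2+\p$ and $\bm{\lambda}_2+\vv+\p$ are still outside $\Lambda$), while the remaining factors give $\chi(\bm{\lambda}_2,\vv)$.

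For part (iii), note that when $\vv=\bm{\lambda}_2\in\Lambda$ a valid choice of base point is any $\p\notin\Lambda$, since $\bm{\lambda}_2+\p\notin\Lambda$ automatically. Expanding,
\[
\chi(\bm{\lambda}_1,\bm{\lambda}_2)=\frac{\delta(\bm{\lambda}_1,\bm{\lambda}_2+\p)}{\delta(\bm{\lambda}_1,\p)}=\frac{W(\bm{\lambda}_1+\bm{\lambda}_2+\p)\,W(\p)}{W(\bm{\lambda}_2+\p)\,W(\bm{\lambda}_1+\p)},
\]
and the right side is manifestly symmetric in $\bm{\lambda}_1,\bm{\lambda}_2$. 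For part (iv), observe that $\chi(\bm{\lambda},\mathbf{0})=\delta(\bm{\lambda},\p)/\delta(\bm{\lambda},\p)=1$, so applying (i) with $\vv_1=\vv$, $\vv_2=-\vv$ gives $1=\chi(\bm{\lambda},\vv)\chi(\bm{\lambda},-\vv)$, yielding the inverse relation. I expect the only mild obstacle to be bookkeeping in (ii): one must verify that every base point chosen at each stage indeed lies outside $\Lambda$, but this is where the subgroup property of $\Lambda$ makes all the translates behave well and where the assumption $|A/\Lambda|\geq 4$ guarantees enough room to pick $\p$ in the first place.
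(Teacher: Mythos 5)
Your proposal follows the paper's own proof essentially step for step: part (i) by telescoping $\delta(\bm{\lambda},\vv_1+\vv_2+\p)/\delta(\bm{\lambda},\p)$ through the intermediate base point $\vv_2+\p$, part (ii) by expanding the $\delta$'s into values of $W$ (your ``cocycle'' identity is exactly the cancellation the paper performs), part (iii) by exhibiting the manifestly symmetric expression $W(\bm{\lambda}_1+\bm{\lambda}_2+\p)W(\p)/\bigl(W(\bm{\lambda}_1+\p)W(\bm{\lambda}_2+\p)\bigr)$, and part (iv) from $\chi(\bm{\lambda},\mathbf{0})=1$ together with (i). One small correction to (i): you ask for $\p$ with all four of $\p$, $\vv_1+\p$, $\vv_2+\p$, $\vv_1+\vv_2+\p$ outside $\Lambda$, and claim $|A/\Lambda|\geq 4$ suffices; it does not, since when $A/\Lambda\cong(\ZZ/2)^2$ and $\vv_1,\vv_2$ reduce to the two generators, the cosets of $\mathbf{0},-\vv_1,-\vv_2,-\vv_1-\vv_2$ exhaust $A/\Lambda$. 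The condition $\vv_1+\p\notin\Lambda$ is never used in your computation, so the fix is simply to drop it: only the three conditions $\p,\ \vv_2+\p,\ \vv_1+\vv_2+\p\notin\Lambda$ are needed, and the pigeonhole argument with $|A/\Lambda|\geq 4$ (as in the paper) then goes through.
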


\begin{proof} 
    First we note that if $|A/\Lambda| \geq 4$, then for any choice of $\vv_1,\vv_2 \in A$,
    we can find $\p \in A$ so that $\p,\p+\vv_2,$ and $\p+\vv_1+\vv_2$ are not
    in $\Lambda$. In particular, by pigeonhole principle, we can find $\overline{\uu} \in A/\Lambda$
    so that the image of $\mathbf{0},\vv_2$ and $\vv_1+\vv_2$ will miss $\overline{\uu}$ in
    $A/\Lambda$. Letting $\p$ be any element in $A$ that reduces to $-\overline{\uu}$ we get
    the desired result.
    Given this $\p$ we have,
    \begin{eqnarray*}
        \chi(\bm{\lambda}, \vv_{1})\chi(\bm{\lambda}, \vv_{2}) & = & 
        \frac{\delta(\bm{\lambda}, \vv_{1}+\vv_{2}+\p)}{\delta(\bm{\lambda}, \vv_{2}+\p)} \frac{\delta(\bm{\lambda}, \vv_{2}+\p)}{\delta(\bm{\lambda}, \p)} \\ 
        & = & \frac{\delta(\bm{\lambda}, \vv_{1}+\vv_{2}+\p)}{\delta(\bm{\lambda}, \p)} \\ 
        & = & \chi(\bm{\lambda}, \vv_{1}+\vv_{2}).
    \end{eqnarray*}
    This proves the first statement.

    For the second statement, we let $\p \in A \backslash \Lambda$ be such that 
    $\vv + \p \notin \Lambda$ (Again, by pigeonhole principle, such an element exists). 
    Since $\Lambda$ is a subgroup of $A$, it follows that 
    $\vv+\p+\bm{\lambda}_{2}, \p+\bm{\lambda}_{2} \notin \Lambda$. 
    Hence, we have 
    \begin{eqnarray*} 
        \chi(\bm{\lambda}_{1}, \vv) \chi(\bm{\lambda}_{2}, \vv) & = & 
        \frac{\delta(\bm{\lambda}_{1}, \vv+\p+\bm{\lambda}_{2}) \delta(\bm{\lambda}_{2}, \vv+\p)}{\delta(\bm{\lambda}_{1}, \p+\bm{\lambda}_{2}) \delta(\bm{\lambda}_{2}, \p)} \\ 
        & = & 
        \frac{W(\vv+\p+\bm{\lambda}_{1}+\bm{\lambda}_{2})W(\p+\bm{\lambda}_{2})W(\vv+\p+\bm{\lambda}_{2})W(\p)}{W(\vv+\p+\bm{\lambda}_{2})W(\p+\bm{\lambda}_{1}+\bm{\lambda}_{2})W(\vv+\p)W(\p+\bm{\lambda}_{2})}
        \\ & = & 
        \frac{W(\vv+\p+\bm{\lambda}_{1}+\bm{\lambda}_{2})W(\p)}{W(\vv+\p)W(\p+\bm{\lambda}_{1}+\bm{\lambda}_{2})} \\ 
        & = & \frac{\delta(\bm{\lambda}_{1}+\bm{\lambda}_{2}, \vv+\p)}{\delta(\bm{\lambda}_{1}+\bm{\lambda}_{2}, \p)} \\ 
        & = & \chi(\bm{\lambda}_{1}+\bm{\lambda}_{2}, \vv).  
    \end{eqnarray*}

    For the third statement, taking $\p\in A\setminus \Lambda$, we have 
    $$ \chi(\bm{\lambda}_{1}, \bm{\lambda}_{2}) = 
    \frac{\delta(\bm{\lambda}_{1}, \bm{\lambda}_{2} + \p)}{\delta(\bm{\lambda}_{1}, \p)} = 
    \frac{W(\bm{\lambda}_{1} + \bm{\lambda}_{2} +\p)W(\p)}{W(\bm{\lambda}_{2} + \p)W(\bm{\lambda}_{1} +\p)}  = 
    \frac{\delta(\bm{\lambda}_{2}, \bm{\lambda}_{1} + \p)}{\delta(\bm{\lambda}_{2}, \p)}  = 
    \chi(\bm{\lambda}_{2}, \bm{\lambda}_{1}). $$

    The last statement follows from $(i)$ and the fact that $\chi(\bm{\lambda}, 0) = 1$.
\end{proof}

Note that for $\bm{\lambda} \in \Lambda$ and $\vv \notin \Lambda$ we have 
\[ W(\vv+\bm{\lambda})=\delta(\bm{\lambda},\vv)W(\vv)={\delta(\bm{\lambda},\vv) \over \chi(\bm{\lambda},\vv)} \chi(\bm{\lambda},\vv)W(\vv). \]
We now show that $\delta(\bm{\lambda},\vv)/\chi(\bm{\lambda},\vv)$ is independent of choice of $\vv$. 

\begin{lem} \label{constlem} 
    For all $\vv_{1}, \vv_{2} \in A \backslash \Lambda$ we have 
    \begin{equation*} 
        {\delta(\bm{\lambda},\vv_1) \over \chi(\bm{\lambda},\vv_1)} = {\delta(\bm{\lambda}, \vv_{2}) \over \chi(\bm{\lambda},\vv_2)}.
    \end{equation*}
\end{lem}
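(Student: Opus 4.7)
The plan is to reduce the statement to a single application of Lemma \ref{sigeqn} after selecting a common auxiliary element $\p$ to evaluate both $\chi(\bm{\lambda},\vv_1)$ and $\chi(\bm{\lambda},\vv_2)$. Since $\Lambda$ is a subgroup of $A$ with $|A/\Lambda|\geq 4$, I can avoid three prescribed cosets simultaneously, so there exists $\p\in A$ with $\p,\ \vv_1+\p,\ \vv_2+\p\notin\Lambda$. With this one $\p$, formula \eqref{eqn chi} lets me write
$$\chi(\bm{\lambda},\vv_1)=\frac{\delta(\bm{\lambda},\vv_1+\p)}{\delta(\bm{\lambda},\p)},\qquad \chi(\bm{\lambda},\vv_2)=\frac{\delta(\bm{\lambda},\vv_2+\p)}{\delta(\bm{\lambda},\p)}.$$

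Next, I would cross-multiply the desired equality. After cancelling the common denominator $\delta(\bm{\lambda},\p)$, the identity to be proved becomes
$$\delta(\bm{\lambda},\vv_1)\,\delta(\bm{\lambda},\vv_2+\p)=\delta(\bm{\lambda},\vv_2)\,\delta(\bm{\lambda},\vv_1+\p).$$
This is precisely Lemma \ref{sigeqn} applied with $\mathbf{a}=\vv_1$, $\mathbf{b}=\vv_2+\p$, $\mathbf{c}=\vv_2$, $\mathbf{d}=\vv_1+\p$: indeed $\mathbf{a}+\mathbf{b}=\mathbf{c}+\mathbf{d}=\vv_1+\vv_2+\p$, and all four of $\mathbf{a},\mathbf{b},\mathbf{c},\mathbf{d}$ lie in $A\setminus\Lambda$ by the hypotheses on $\vv_1,\vv_2\in A\setminus\Lambda$ and the choice of $\p$.

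The only subtle point, and essentially the only thing that requires care, is the simultaneous choice of a single auxiliary $\p$ that makes both $\chi(\bm{\lambda},\vv_1)$ and $\chi(\bm{\lambda},\vv_2)$ computable from the same base point; this is where the hypothesis $|A/\Lambda|\geq 4$ carries the weight, ensuring the three forbidden cosets $0$, $-\vv_1$, $-\vv_2$ do not exhaust $A/\Lambda$. Everything else is a one-line cross-multiplication followed by the coincidence that makes Lemma \ref{sigeqn} directly applicable.
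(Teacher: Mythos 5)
Your proof is correct. Every step checks out: the pigeonhole choice of $\p$ avoiding the three cosets $\Lambda$, $-\vv_1+\Lambda$, $-\vv_2+\Lambda$ is legitimate under $|A/\Lambda|\geq 4$; the two expressions for $\chi(\bm{\lambda},\vv_1)$ and $\chi(\bm{\lambda},\vv_2)$ are valid instances of the definition \eqref{eqn chi}; and the cross-multiplied identity $\delta(\bm{\lambda},\vv_1)\delta(\bm{\lambda},\vv_2+\p)=\delta(\bm{\lambda},\vv_2)\delta(\bm{\lambda},\vv_1+\p)$ is exactly Lemma \ref{sigeqn} with $\mathbf{a}+\mathbf{b}=\mathbf{c}+\mathbf{d}=\vv_1+\vv_2+\p$ and all four arguments outside $\Lambda$. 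Your route differs from the paper's in organization rather than substance. The paper first handles the case $\vv_1+\vv_2\notin\Lambda$ by evaluating $\chi(\bm{\lambda},\vv_1)$ with the auxiliary point $\vv_2$ and $\chi(\bm{\lambda},\vv_2)$ with the auxiliary point $\vv_1$, so both ratios equal $\delta(\bm{\lambda},\vv_1)\delta(\bm{\lambda},\vv_2)/\delta(\bm{\lambda},\vv_1+\vv_2)$; it then treats the case $\vv_1+\vv_2\in\Lambda$ by inserting an intermediate point $\vv_1+\vv_2+\p$ and applying the first case twice. Your single choice of a common base point $\p$ collapses the two cases into one and invokes Lemma \ref{sigeqn} directly rather than through the intermediate identity; the paper's version makes the special role of the sum $\vv_1+\vv_2$ visible, while yours is shorter and uniform. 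Both rest on the same two ingredients, Lemma \ref{sigeqn} and the hypothesis $|A/\Lambda|\geq 4$.
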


\begin{proof} 
    First, if $\vv_{1} + \vv_{2} \notin \Lambda$ we have
    \begin{equation}\label{a identity 1} 
        \frac{\delta(\bm{\lambda}, \vv_{1})}{\chi(\bm{\lambda}, \vv_{1})} = 
        \frac{\delta(\bm{\lambda}, \vv_{1}) \delta(\bm{\lambda}, \vv_{2})}{\delta(\bm{\lambda}, \vv_{1}+\vv_{2})} = 
        \frac{\delta(\bm{\lambda}, \vv_{2})}{\chi(\bm{\lambda}, \vv_{2})}. 
    \end{equation}

    Next, we suppose that $\vv_{1} + \vv_{2} \in \Lambda$. Then, 
    since $|A/\Lambda| \geq 4$, we can find $\p \in A\setminus \Lambda$ such that 
    $\p \not\equiv -\vv_{1}, -\vv_{2} \pmod \Lambda$. Then, we have
    \begin{equation*}
        \vv_{1} +\vv_{2} + \p, 2\vv_{1}+\vv_{2}+\p, \vv_{1}+2\vv_{2}+\p \notin \Lambda.
    \end{equation*}
    It then follows from \eqref{a identity 1}, that
    \begin{equation*}
        {\delta(\bm{\lambda},\vv_{1}) \over \chi(\bm{\lambda},\vv_1)}= 
        {\delta(\bm{\lambda},\vv_{1}+\vv_{2}+\p) \over \chi(\bm{\lambda},\vv_1+\vv_2+\p)} = 
        {\delta(\bm{\lambda},\vv_{2}) \over \chi(\bm{\lambda},\vv_2)}.
    \end{equation*}
\end{proof}
 
Now in light of Lemma \ref{constlem}, we define
\begin{equation}
    \begin{array}{cccc} 
        \xi: & \Lambda & \longrightarrow & \kk^\times \\
        & \bm{\lambda} & \longmapsto & \frac{\delta(\bm{\lambda}, \vv)}{\chi(\bm{\lambda}, \vv)},
    \end{array}
\end{equation}
for any choice of $\vv \in A \setminus \Lambda$. Lemma \ref{constlem} shows that
$\xi$ is a well defined function. 

We are now in a position to give a generalization of Theorem \ref{Ward sym}.
\begin{reptheorem}{Ward-generalized} 
    Let $W:A \rightarrow \kk$ be an elliptic net with the property that $\Lambda=W^{-1}(0)$ is a subgroup of $A$
    and assume $|A/\Lambda| \geq 4$. Then, there exist well defined functions
    $\xi :\Lambda \rightarrow \kk^\times$ and $\chi:\Lambda \times A \rightarrow \kk^\times
    $ such that
$$W(\bm{\lambda}+\vv)=\xi(\bm{\lambda})\chi(\bm{\lambda},\vv)W(\vv)~ for ~ all ~\bm{\lambda} \in \Lambda~
            and~ all ~\vv \in A,$$
and the functions $\xi$ and $\chi$ satisfy the following properties:
    \begin{enumerate}[(i)]
        \item $\chi$ is bilinear,
        \item $\chi(\bm{\lambda}_1,\bm{\lambda}_2)=\chi(\bm{\lambda}_2,\bm{\lambda}_1)$,
        \item $\xi(\bm{\lambda}_1+\bm{\lambda}_2)=\xi(\bm{\lambda}_1)\xi(\bm{\lambda}_2)\chi(\bm{\lambda}_1,\bm{\lambda}_2)$, 
        \item $\xi(-\bm{\lambda})=\xi(\bm{\lambda})$, and
        \item $\xi(\bm{\lambda})^2 = \chi(\bm{\lambda},\bm{\lambda})$.
    \end{enumerate}
\end{reptheorem}

\begin{proof} 
    Recall that we have defined the functions
    $\delta(\bm{\lambda},\vv)={W(\vv+\bm{\lambda}) \over W(\vv)}$,
    $\chi(\bm{\lambda},\vv)=\frac{\delta(\bm{\lambda},\vv+\p)}{\delta(\bm{\lambda},\p)}$,
    $\xi(\bm{\lambda})={\delta(\bm{\lambda},\vv) \over \chi(\bm{\lambda},\vv)}$ 
    for any choice of $\vv, \p \in A$ so that the fractions make sense.
    Note that
    \[ W(\vv+\bm{\lambda})=\delta(\bm{\lambda},\vv)W(\vv) = \xi(\bm{\lambda})\chi(\bm{\lambda},\vv)W(\vv), \]
    for any $\vv \not \in \Lambda$. If $\vv \in \Lambda$ then both sides are $0$.
    Therefore, for any $\vv \in A$ and any $\bm{\lambda} \in \Lambda$ we have
    \begin{equation}
        W(\vv+\bm{\lambda})=\xi(\bm{\lambda})\chi(\bm{\lambda},\vv)W(\vv)
        \label{eqn monodromy}
    \end{equation}
    Furthermore, Lemma \ref{chi lem} shows that $\chi$ is bilinear and 
    $\chi|_{\Lambda \times \Lambda}$ is symmetric.

    Therefore, all we have to do is to show that 
    \begin{equation}
        \label{eqn asum} 
        \xi(\bm{\lambda}_1+\bm{\lambda}_2)=\xi(\bm{\lambda}_1)\xi(\bm{\lambda}_2)\chi(\bm{\lambda}_1,\bm{\lambda}_2), 
    \end{equation}
    that $\xi(-\bm{\lambda})=\xi(\bm{\lambda})$, and 
    \begin{equation}
        \label{eqn asqr}
        \xi(\bm{\lambda})^2 = \chi(\bm{\lambda},\bm{\lambda}). 
    \end{equation}
    Let $\bm{\lambda}_1,\bm{\lambda}_2 \in \Lambda$ and $\vv \notin \Lambda$. 
    Note that by \eqref{eqn monodromy} and (i) we get
    \[ W(\bm{\lambda}_1+\bm{\lambda}_2+\vv)=\xi(\bm{\lambda}_1+\bm{\lambda}_2)\chi(\bm{\lambda}_1+\bm{\lambda}_2,\vv)W(\vv) = \xi(\bm{\lambda}_1+\bm{\lambda}_2)\chi(\bm{\lambda}_1,\vv)\chi(\bm{\lambda}_2,\vv)W(\vv). \]
    On the other hand
    \begin{align*}
        W(\bm{\lambda}_1+(\bm{\lambda}_2+\vv))=& \xi(\bm{\lambda}_1)\chi(\bm{\lambda}_1,\vv+\bm{\lambda}_2)W(\vv+\bm{\lambda}_2) \\
        =& \xi(\bm{\lambda}_1)\xi(\bm{\lambda}_2)\chi(\bm{\lambda}_1,\vv+\bm{\lambda}_2)\chi(\bm{\lambda}_2,\vv)W(\vv) \\
        =& \xi(\bm{\lambda}_1)\xi(\bm{\lambda}_2)\chi(\bm{\lambda}_1,\bm{\lambda}_2)\chi(\bm{\lambda}_1,\vv)\chi(\bm{\lambda}_2,\vv)W(\vv). 
    \end{align*}
    Equating the above two equations for $W(\bm{\lambda}_1+\bm{\lambda_2}+\vv)$ yields
    \[ \xi(\bm{\lambda}_1+\bm{\lambda}_2)\chi(\bm{\lambda}_1,\vv)\chi(\bm{\lambda}_2,\vv) = \xi(\bm{\lambda}_1)\xi(\bm{\lambda}_2)\chi(\bm{\lambda}_1,\bm{\lambda}_2)\chi(\bm{\lambda}_1,\vv)\chi(\bm{\lambda}_2,\vv), \]
    which gives us \eqref{eqn asum}.

    Now note that $\xi(\mathbf{0})=1$, since
    $W(\vv+\mathbf{0})=\xi(\mathbf{0})\chi(\mathbf{0},\vv)W(\vv)=W(\vv)$.
    Similarly, 
    \begin{align*}
        W(-\vv-\bm{\lambda})&=\xi(-\bm{\lambda})\chi(-\bm{\lambda},-\vv)W(-\vv) \\
        &=\xi(-\bm{\lambda})\chi(\bm{\lambda},\vv)W(-\vv) \\
        &=-\xi(-\bm{\lambda})\chi(\bm{\lambda},\vv)W(\vv)
    \end{align*}
    while
    \begin{align*}
        W(-\vv-\bm{\lambda})&=-W(\vv+\bm{\lambda}) \\
        &=-\xi(\bm{\lambda})\chi(\bm{\lambda},\vv)W(\vv)
    \end{align*}
    which implies $\xi(-\bm{\lambda})=\xi(\bm{\lambda})$.
    Therefore
    \[ 1=\xi(\mathbf{0})=\xi(\bm{\lambda}-\bm{\lambda})=\xi(\bm{\lambda})\xi(-\bm{\lambda})\chi(\bm{\lambda},-\bm{\lambda}), \]
    which by employing part (iv) of Lemma \ref{chi lem} results in $\xi(\bm{\lambda})^2=\chi(\bm{\lambda},\bm{\lambda})$. This completes the proof of our
    theorem.
\end{proof}

As an immediate corollary of the above theorem we have
\begin{cor}
\label{nsquare}
    Let $W:A\rightarrow \kk$ be an elliptic net with $\Lambda=W^{-1}(0)$ be a subgroup
    of $A$ and $|A/\Lambda|\geq 4$. Then for all $\bm{\lambda} \in \Lambda$ and $n \in \ZZ$
    we have
    \[ \xi(n\bm{\lambda})=\xi(\bm{\lambda})^{n^2}. \]
\end{cor}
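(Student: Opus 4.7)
The plan is to proceed by induction on $n \geq 0$, and then use property (iv) of Theorem \ref{Ward-generalized} to handle negative integers. The base cases are immediate: for $n=0$, we have $\xi(\mathbf{0}) = 1 = \xi(\bm{\lambda})^0$ (as established at the end of the proof of Theorem \ref{Ward-generalized}), and for $n=1$ the claim is a tautology.

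For the inductive step, suppose $\xi(n\bm{\lambda}) = \xi(\bm{\lambda})^{n^2}$ for some $n \geq 1$. First apply property (iii) with $\bm{\lambda}_1 = n\bm{\lambda}$ and $\bm{\lambda}_2 = \bm{\lambda}$ to obtain
\[
\xi((n+1)\bm{\lambda}) = \xi(n\bm{\lambda})\xi(\bm{\lambda})\chi(n\bm{\lambda},\bm{\lambda}).
\]
Next, the bilinearity of $\chi$ (part (i) of the theorem) gives $\chi(n\bm{\lambda},\bm{\lambda}) = \chi(\bm{\lambda},\bm{\lambda})^n$, and property (v) lets us replace $\chi(\bm{\lambda},\bm{\lambda})$ by $\xi(\bm{\lambda})^2$. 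Combining these with the inductive hypothesis yields
\[
\xi((n+1)\bm{\lambda}) = \xi(\bm{\lambda})^{n^2} \cdot \xi(\bm{\lambda}) \cdot \xi(\bm{\lambda})^{2n} = \xi(\bm{\lambda})^{(n+1)^2},
\]
completing the induction.

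Finally, for negative integers, property (iv) gives $\xi(-m\bm{\lambda}) = \xi(m\bm{\lambda}) = \xi(\bm{\lambda})^{m^2} = \xi(\bm{\lambda})^{(-m)^2}$ for any $m \geq 0$, which finishes the proof. There is no real obstacle here; the result is a short formal consequence of the axioms (i), (iii), (iv), (v) of Theorem \ref{Ward-generalized}, and the only thing one must remember to invoke is the bilinearity of $\chi$ which converts $\chi(n\bm{\lambda},\bm{\lambda})$ into $\chi(\bm{\lambda},\bm{\lambda})^n$.
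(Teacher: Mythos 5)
Your proposal is correct and follows essentially the same argument as the paper: induction on $n\geq 1$ using property (iii) together with the bilinearity of $\chi$ and property (v) to get the inductive step, then property (iv) to extend to negative integers. No differences worth noting.
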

\begin{proof} 
    We already showed that $\xi(\mathbf{0})=1$, so the statement holds for $n=0$.
    It also trivially holds for $n=1$. We proceed by induction.
    Assume the statement is true for some $n \geq 1$. From part (4) of 
Theorem \ref{Ward-generalized} and Lemma \ref{chi lem}, we have 
    $$\xi((n+1)\bm{\lambda}) = 
    \xi(\bm{\lambda})\xi(n\bm{\lambda}) \chi(\bm{\lambda}, n\bm{\lambda}) = 
    \xi(\bm{\lambda})\xi(n\bm{\lambda}) \chi(\bm{\lambda}, \bm{\lambda})^{n}.$$
    From the induction hypothesis and part (v) of Theorem \ref{Ward-generalized}, it follows that
    $$\xi((n+1)\bm{\lambda}) = \xi(\bm{\lambda})^{n^2+1}\xi(\bm{\lambda})^{2n} = \xi(\bm{\lambda})^{(n+1)^2}.$$
    Therefore the statement holds for all $n \geq 0$.
    Finally note that $\xi(-n\bm{\lambda})=\xi(n\bm{\lambda})=\xi(\bm{\lambda})^{n^2}$ from
    part (5) of Theorem \ref{Ward-generalized}.
    Thus the statement holds for all $n \in \ZZ$. 
\end{proof}

Note that Theorem \ref{Ward-generalized} allows us to compute $W:A \rightarrow \kk$
by knowing the values of $W$ on a set of representatives of $A/\Lambda$ and
by computing certain  values of $\chi$ and $\xi$. In particular if $\Lambda$ is a full rank subgroup of $A$,
then we can choose $\bm{\lambda}_1,\bm{\lambda}_2,\ldots,\bm{\lambda}_r$ as a basis of $\Lambda$.
Then
\begin{align*}
    W\left(\left(\sum_{i=1}^r n_i\bm{\lambda}_i\right)+\vv\right) &= \xi\left(\sum_{i=1}^r n_i\bm{\lambda}_i\right) \chi\left(\sum_{i=1}^r n_i\bm{\lambda}_i,\vv\right) W(\vv) \\
    &=\xi\left(\sum_{i=1}^r n_i\bm{\lambda}_i\right) \prod_{i=1}^r \chi\left(\bm{\lambda}_i,\vv\right)^{n_i}W(\vv)
\end{align*}
and 
\begin{align*}
    \xi\left(\sum_{i=1}^r n_i\bm{\lambda}_i\right) &= \prod_{i=1}^r \xi(n_i\bm{\lambda}_i) \left(\prod_{j=i+1}^r \chi(\bm{\lambda}_i,\bm{\lambda}_j)^{n_in_j} \right)\\
    &=\prod_{i=1}^r \xi(\bm{\lambda}_i)^{n_i^2} \left(\prod_{j=i+1}^r \chi(\bm{\lambda}_i,\bm{\lambda}_j)^{n_in_j} \right).
\end{align*}
Combining the above two identities yields \eqref{W-formula}.
\medskip\par
\begin{proof}[Proof of Corollary \ref{cor-13}]
If $\kk=\FF_q$, a finite field with $q$ elements, and if $(q-1) | n_i$ for all $i$, then 
we get
$\xi(\sum_{i=1}^r n_i \bm{\lambda}_i) = 1$, since every term is raised to a power divisible by $n_i$
for some $i$. Similarly, $\chi(\bm{\lambda}_i,\vv)^{n_i}=1$.
\end{proof}
\begin{exam}\label{third-example}
    Here by an example we show that how one can use the identity (\ref{W-formula}) to calculate an arbitrary term of an elliptic net over a finite field. To illustrate the method we consider a rank $2$ elliptic net associated to an elliptic curve over $\mathbb{Q}$ and compute a specific term of its associated $p$-reduced nets as $p$ varies over certain primes.

    For a prime $p$ let $W:\ZZ^{2} \rightarrow \FF_{p}$ be the elliptic net associated to the rank $2$ elliptic curve $y^{2} = x^{3} - 11$ and generators $P = (3,4)$, and $Q = (15,58)$. The net $W$ has a unique rank of apparition respect to the standard basis $\{\ee_1, \ee_2\}$ and so its zero set forms a subgroup of rank $2$ of $\ZZ^2$. We choose a basis $\{\bm{\lambda}_1, \bm{\lambda}_2\}$ for this subgroup and by using definitions of functions $\xi$ and $\chi$ we compute 
    $\xi(\bm{\lambda}_{1})$, $\xi(\bm{\lambda}_{2})$,  $\chi(\bm{\lambda}_{1},\bm{\lambda}_{2})$, $\chi(\bm{\lambda}_{1}, \ee_{1})$, $\chi(\bm{\lambda}_{1}, \ee_{2})$, $\chi_(\bm{\lambda}_{2},\ee_{1})$, and $\chi(\bm{\lambda}_{2},\ee_{2})$. The following table summarizes the result of our computations for five values of $p$ (i.e. $p= 7, 11,  19, 61, 89$).
    \begin{table}[h!]
     \centering
     \begin{tabular}{c|c|c|c|c|c|c|c|c|c}
    $ p $ & $\bm{\lambda}_{1}$ & $\bm{\lambda}_{2}$ & $\xi(\bm{\lambda}_{1})$ & $\xi(\bm{\lambda}_{2})$ & $\chi(\bm{\lambda}_{1},\bm{\lambda}_{2})$ & $\chi(\bm{\lambda}_{1}, \ee_{1})$ & $\chi(\bm{\lambda}_{1}, \ee_{2})$ & $\chi_(\bm{\lambda}_{2},\ee_{1})$ & $\chi(\bm{\lambda}_{2},\ee_{2})$ \\
     \hline 
     7 & (1,5) & (0,13) & 1 & 4 & 3 & 3 & 3 & 6 & 2 \\
     11 & (1,7) & (0,11) & 4 & 9 & 9 & 4 & 9 & 9 & 6 \\
     19 & (1,6) & (0,14) & 8 & 5 & 4 & 1 & 3 & 6 & 2 \\
     61 & (2,8) & (0,38) & 39 & 60 & 19 & 34 & 6 & 43 & 41 \\
     89 & (9,3) & (0,10) & 87 & 43 & 80 & 62 & 58 & 52 & 33
     \end{tabular}
    \end{table}

    Let $D$ be a fixed set of representatives for $\ZZ^2/\Lambda$. Then any point $(r, s)$ in $\ZZ^2$ can be uniquely written as $(r, s)=n_{1}\bm{\lambda}_{1} + n_{2}\bm{\lambda}_{2} + m_{1}\ee_{1} + m_{2}\ee_{2}$ with $(m_1, m_2)\in D$.  Now by computing values for $W(m_1\ee_1+m_2\ee_2)$ (by using the defining recursion of our net), the above table,  and employing the rank $2$ version of \eqref{W-formula},
    \begin{multline*}
     W(n_{1}\bm{\lambda}_{1} + n_{2}\bm{\lambda}_{2} + m_{1}\ee_{1} + m_{2}\ee_{2})  =  \xi(\bm{\lambda}_{1})^{n_{1}^{2}}\xi(\bm{\lambda}_{2})^{n_{2}^{2}}\chi(\bm{\lambda}_{1},\bm{\lambda}_{2})^{n_{1}n_{2}} \chi(\bm{\lambda}_{1},\ee_{1})^{n_{1}m_{1}}\chi(\bm{\lambda}_{1},\ee_{2})^{n_{1}m_{2}}\\
    \times~\chi(\bm{\lambda}_{2},\ee_{1})^{n_{2}m_{1}}\chi(\bm{\lambda}_{2},\ee_{2})^{n_{2}m_{2}} W(m_{1}\ee_{1}+m_{2}\ee_{2}),
    \end{multline*}
    we can compute $W(r, s)$.

    Here by using the above formula and table we compute the term $W(101,100)$ modulo $p$.

    \begin{table}[h!]
     \centering
     \begin{tabular}{c|c|c|c|c|c|c}
     $p$  & $n_{1}$ & $n_{2}$ & $m_{1}$ & $m_{2}$ & $W(m_{1}\ee_{1}+m_{2}\ee_{2})$ & W(101, 100) \\
    \hline
     7  & 101 & -32 & 0 & 11 & 3 & 1 \\
     11  & 101 & -56 & 0 & 9 & 6 & 5 \\
     19  & 101 & -37 & 0 & 12 & 12 & 12\\
     61  & 50 & -8 & 1 & 4 & 21 & 28 \\
     89  & 11 & 6 & 2 & 7 & 44 & 52  
     \end{tabular}
    \end{table}

\end{exam}
\section{Proofs of Proposition \ref{valuation-prop}, Theorem \ref{first-theorem}, and Proposition \ref{second-proposition}} 
\label{sec5}
Recall that $K$ is a field with a discrete valuation $\nu : K^\times \rightarrow \ZZ$.
We have $\OO$, $\pp$, and $\kk$  defined as before.
An application of the fact that $\Psi_\vv^\univ \in \RU_r^\univ$
is the following proof of proposition \ref{valuation-prop}.
\begin{proof}[Proof of Proposition \ref{valuation-prop}]
    Recall that $\pi_E: S^{\univ} \rightarrow K$ is defined by $\pi_E(\alpha_i)=a_i$. Then the image of $\pi_E$ lies in 
    $\OO$, so we can think of $\pi_E$ as a function from  $S^\univ$ into $\OO$.
    In particular for any $\vv \in \ZZ^r$ we get that
     \begin{equation}
     \label{net-polynomial}
     \Psi_\vv=(\pi_E)_*(\Psi_\vv^\univ) \in \OO[x_i,y_i]_{1\leq i\leq r}[(x_i-x_j)^{-1}]_{1\leq i <j\leq r}/{\langle f(x_i,y_i)\rangle}_{1\leq i \leq r}. 
     \end{equation}
    Now assume that $P_i \not \equiv \infty \pmod \pp$ and  
    $P_i \pm P_j \not \equiv \infty \pmod \pp$
    for all $i \neq j$.
    Then, since $P_i \not \equiv \infty \pmod \pp$, we have 
    $\nu(x(P_i)) \geq 0$ and $\nu(y(P_i)) \geq 0$ and so $\nu(x(P_i)-x(P_j)) \geq 0$.
    On the other hand, since $P_i, P_j, P_i \pm P_j \not \equiv \infty \pmod \pp$ we conclude that $x(P_i) \not \equiv x(P_j) \pmod \pp$, and thus 
    $\nu(x(P_i)-x(P_j)) \leq 0$. Therefore, $\nu(x(P_i)-x(P_j)) = 0$. This together with \eqref{net-polynomial} give
    $\nu(\Psi_\vv(\mathbf{P})) \geq 0$, as desired.
\end{proof}
\begin{proof}[Proof of Theorem \ref{first-theorem}] 
    $(a) \Longrightarrow (b)$. 
    Observe that $\Psi_{ne_i}({\bf P})=\psi_n(P_i)$.  So 
    the result follows from Theorem \ref{ayad}.  
    
    $(b) \Longrightarrow (c)$ is clear.  
    
    $(c) \Longleftrightarrow (d).$ 
    From Lemma \ref{numerator formula}, we have 
    \[ \Phi_\vv(\mathbf{P})=\Psi_\vv^2(\mathbf{P})x(P_i) - \Psi_{\vv+\ee_i}(\mathbf{P})\Psi_{\vv-\ee_i}(\mathbf{P}), \] 
    which implies the (c) and (d) are equivalent.

    $(c) \Longrightarrow (e)$. 
    First note that by proposition \ref{valuation-prop}, we have
    $\nu(\Psi_\vv(\mathbf{P})) \geq 0$, hence
    $\nu(\Psi_\vv(\mathbf{P})) \in \OO$ and therefore
    the reduction mod $\pp$ is well defined. We let
    $\Psi_\vv(\mathbf{P}) \pmod \pp$ be the image of $\Psi_\vv(\mathbf{P})$
    in the corresponding residue field under this reduction map.
    By part (a) of Lemma \ref{lem prelim} we get that
    $\Psi_\vv(\mathbf{P}) \pmod \pp$ is an elliptic net.
    Under the assumptions of (c) we have
    $\Psi_{\vv}(\mathbf{P}) \pmod \pp = 0$ and
    $\Psi_{\vv+\ee_i}(\mathbf{P})\pmod \pp =0$. Now if the zero set of 
    $\Psi_{\vv}(\mathbf{P}) \pmod \pp$ forms a subgroup then we have 
    $\Psi_{\ee_i}(\mathbf{P}) \pmod \pp=\psi_1(P_i) \pmod \pp=0$ 
    which is a contradiction, since $\psi_1=1$. So the zero set of 
    $\Psi_{\vv}(\mathbf{P}) \pmod \pp$ 
    does not form a subgroup of $\ZZ^r$ and thus by Theorem
    \ref{second-theorem} we conclude that 
    $\Psi_{\vv}(\mathbf{P}) \pmod \pp$ 
    does not have a unique rank of apparition (with respect to
    $\{\ee_1, \cdots, \ee_r\}$).
    So there exists $1\leq i \leq r$ such that
    $\Psi_{n\ee_i}(\mathbf{P}) \pmod \pp$ does not have a unique
    rank of apparition.  By Theorem \ref{Ward6.2} we get
    that $\Psi_{3\ee_i} \pmod \pp =\Psi_{4\ee_i} \pmod \pp = 0,$
    which means
    $\nu(\Psi_{3\ee_i})$ and $\nu(\Psi_{4\ee_i}) > 0$.
    Therefore from Theorem \ref{ayad} we conclude that $P_i \pmod \pp$ is singular.

    $(e) \Longrightarrow (a)$ Since $P_i \pmod \pp$ is singular, then
    from Theorem \ref{ayad} we know that $\nu(\psi_2(P_i))>0$ and
    $\nu(\psi_3(P_i))>0$. Now the result follows since
    $\psi_n(P_i)=\Psi_{ne_i}({\bf P})$ for $n\in \ZZ$.  
\end{proof}

\begin{proof}[Proof of Proposition \ref{second-proposition}]
    First of all by \cite[Theorem 4.1]{AECII} if $P$ is a point such that $P \pmod p$ is non-singular then we have the following expression for the local N\'{e}ron height of $P$, 
$$\lambda_p(P)= \max\left\{ -{1\over 2} \nu_p(x(P)),0\right\}+\frac{1}{12}\nu_p(\Delta_E).$$  Observe that 
$$\nu_p(D_P)=\max\left\{-{1\over 2} \nu_p(x(P)), 0\right\}.$$ Under our assumptions since $P_i \pmod p$ is non-singular for $1\leq i \leq r$, we conclude that the quadratic form $\eps(\vv)$ in 
Lemma \ref{diff quad}, can be written as
    \[ \eps(\vv)=\nu_p(D_{\vv \cdot \mathbf{P}})-\nu_p(\Psi_\vv(\mathbf{P})) \]
for $\vv\neq \mathbf{0}$.
We also note that $\vv \mapsto \nu_p(F_\vv(\mathbf({P}))$ is a 
    quadratic form, where $F_\vv(\mathbf{P})$ is given in \eqref{FVP}.
    Define $\epshat:\ZZ^r \rightarrow \ZZ$ by
    \[ \epshat(\vv)=\eps(\vv)-\nu_p(F_\vv(\mathbf{P}))=
    \nu_p(D_{\vv \cdot \mathbf{P}})-\nu_p(\Psihat_\vv(\mathbf{P})). \]
    Since $\epshat$ is the difference of two quadratic forms, we conclude 
    that $\epshat$ is also a quadratic form.
    Furthermore, we have
    \begin{align*}
        \epshat(\ee_i)=\nu_p(D_{P_i})-\nu_p(\Psihat_{\ee_i}(\mathbf{P}))=0,
    \end{align*}
    for all $1\leq i \leq r$, 
    and
    \begin{align*}
        \epshat(\ee_i+\ee_j)=\nu_p(D_{P_i+P_j})-\nu_p(\Psihat_{\ee_i+\ee_j}(\mathbf{P}))=0,
    \end{align*}
    for all $1\leq i < j \leq r$.
    Thus by \cite[Lemma 4.5]{Stange} we have $\epshat(\vv)=0$ for
    all $\vv \in \ZZ^r$.
    This shows that, for all $\vv \in \ZZ^r$, we have
    \[ \nu_p(D_{\vv \cdot \mathbf{P}}) = \nu_p(\Psihat_\vv(\mathbf{P})), \]
    as desired.
\end{proof}

The following two examples give illustrations of Proposition 
\ref{second-proposition}.
\begin{exam}{\label{first-example}}
    We consider the elliptic curve $E: y^2=x^3-11$. Then the group of rational
    points of $E$ over $\mathbb{Q}$ is generated by two points $P=(3, 4)$ and
    $Q=(15, 58)$.  We observe that $P, Q \not \equiv \infty \pmod p$ for all
    primes $p$ and $P+Q \not \equiv \infty \pmod p$ for all primes $p$ except
    $p=2$. In Table \ref{table1} we provide some values of the elliptic denominator net
    associated to $E$ and the points $P$ and $Q$ as  a two dimensional array with lower left corner  $D_{0Q+0P}$,  lower right corner 
    $D_{4Q+0P}$,  upper left corner  $D_{0Q+9P}$, and  upper right
    corner  $D_{4Q+9P}$. Table \ref{table2}  provides the corresponding values for the
    elliptic net associated to net polynomials $\Psi_{(v_1, v_2)}(P, Q)$. As
    predicted in Proposition \ref{second-proposition} the valuations of these
    two nets at all primes $p$ (except $p=2$) coincide.  
\end{exam}
\begin{exam}\label{second-example}
    We consider the elliptic curve $E: y^2+7y=x^3+x^2+28x$  with
    $E(\mathbb{Q})$ generated by two independent points $P=(0,0)$ and $Q=(1,
    3)$. Then $P, Q, P+Q \not \equiv \infty \pmod p$ for any prime $p$. However
    $P$ reduces to a singular point modulo $7$. Thus as predicted in Proposition
    \ref{second-proposition} the valuations of the elliptic denominator net (given in
    Table \ref{table3}) and the elliptic net (given in Table \ref{table4}) are the same for all
    primes $p\neq 7$.
\end{exam}


\newpage
\newgeometry{bottom=0in}
\begin{landscape}
    
    \begin{table}
        \fontsize{6pt}{1}\selectfont
        \begin{tabular}{MMMMM}
            3^{3} \cdot 17 \cdot 861139 \cdot 638022143238323743 & 2 \cdot 31 \cdot 227 \cdot 32114101 \cdot 2233563433631 & 13 \cdot 97 \cdot 967 \cdot 2333 \cdot 899531 \cdot 20086489 & 2 \cdot 3^{2} \cdot 67 \cdot 89 \cdot 379 \cdot 1078019 \cdot 724929587 & 23 \cdot 103 \cdot 340789 \cdot 175849593114259\\
            2^{5} \cdot 37 \cdot 167 \cdot 245519 \cdot 3048674017 & 3 \cdot 7^{2} \cdot 11 \cdot 1567 \cdot 634026250609 & 2^{2} \cdot 5^{2} \cdot 43 \cdot 293 \cdot 349 \cdot 631 \cdot 1670527 & 41 \cdot 227 \cdot 4051 \cdot 32279374297 & 2^{3} \cdot 3 \cdot 17 \cdot 37 \cdot 47 \cdot 149 \cdot 263 \cdot 2003 \cdot 714947\\
            19 \cdot 433 \cdot 2689 \cdot 8819 \cdot 40487 & 2 \cdot 131 \cdot 179 \cdot 2103080101 & 3 \cdot 17 \cdot 101 \cdot 15641 \cdot 150379 & 2 \cdot 71 \cdot 83 \cdot 107 \cdot 751 \cdot 22613 & 77711 \cdot 82149276767\\
            2^{3} \cdot 3^{2} \cdot 5 \cdot 17 \cdot 23 \cdot 1737017 & 163 \cdot 1877 \cdot 42797 & 2^{2} \cdot 67 \cdot 317 \cdot 98377 & 3^{2} \cdot 5 \cdot 59 \cdot 25640299 & 2^{6} \cdot 7 \cdot 41 \cdot 157 \cdot 229 \cdot 9437\\
            449 \cdot 104759 & 2 \cdot 3 \cdot 29 \cdot 809 & 11 \cdot 19 \cdot 31 \cdot 677 & 2 \cdot 29 \cdot 569 \cdot 4987 & 3 \cdot 17 \cdot 1439 \cdot 925741\\
            2^{4} \cdot 37 \cdot 167 & 5^{2} \cdot 631 & 2^{2} \cdot 3 \cdot 17 \cdot 149 & 13 \cdot 30557 & 2^{3} \cdot 5 \cdot 37 \cdot 239 \cdot 1549\\
            3^{2} \cdot 17 & 2 \cdot 67 & 7 \cdot 157 & 2 \cdot 3^{3} \cdot 2087 & 19 \cdot 23 \cdot 503 \cdot 659\\
            2^{3} & 3 & 2^{2} \cdot 5 & 11 \cdot 1553 & 2^{4} \cdot 3 \cdot 17 \cdot 199 \cdot 577\\
            1 & 2 & 3 \cdot 17 & 2 \cdot 31 \cdot 233 & 631 \cdot 1753\\
            0 & 1 & 2^{2} \cdot 29 & 3^{2} \cdot 5 \cdot 3331 & 2^{3} \cdot 29 \cdot 37 \cdot 83 \cdot 3467\\ 
        \end{tabular}
      \bigskip\par  
        \caption{Elliptic denominator net associated to $E:y^2=x^3-11$ and the
        points $Q=(15,58)$ and $P=(3, 4)$}
        \label{table1}
    \end{table}

    \begin{table} 
        \fontsize{6pt}{1}\selectfont
        \begin{tabular}{MMMMM}
                -3^{3} \cdot 17 \cdot 861139 \cdot 638022143238323743 & -2^{-8} \cdot 31 \cdot 227 \cdot 32114101 \cdot 2233563433631 & -2^{-18} \cdot 13 \cdot 97 \cdot 967 \cdot 2333 \cdot 899531 \cdot 20086489 & -2^{-26} \cdot 3^{2} \cdot 67 \cdot 89 \cdot 379 \cdot 1078019 \cdot 724929587 & -2^{-36} \cdot 23 \cdot 103 \cdot 340789 \cdot 175849593114259 \\
                2^{5} \cdot 37 \cdot 167 \cdot 245519 \cdot 3048674017 & -2^{-8} \cdot 3 \cdot 7^{2} \cdot 11 \cdot 1567 \cdot 634026250609 & -2^{-14} \cdot 5^{2} \cdot 43 \cdot 293 \cdot 349 \cdot 631 \cdot 1670527 & -2^{-24} \cdot 41 \cdot 227 \cdot 4051 \cdot 32279374297 & -2^{-29} \cdot 3 \cdot 17 \cdot 37 \cdot 47 \cdot 149 \cdot 263 \cdot 2003 \cdot 714947\\
                19 \cdot 433 \cdot 2689 \cdot 8819 \cdot 40487 & 2^{-6} \cdot 131 \cdot 179 \cdot 2103080101 & 2^{-14} \cdot 3 \cdot 17 \cdot 101 \cdot 15641 \cdot 150379 & -2^{-20} \cdot 71 \cdot 83 \cdot 107 \cdot 751 \cdot 22613 & -2^{-28} \cdot 77711 \cdot 82149276767\\
                2^{3} \cdot 3^{2} \cdot 5 \cdot 17 \cdot 23 \cdot 1737017 & 2^{-6} \cdot 163 \cdot 1877 \cdot 42797 & 2^{-10} \cdot 67 \cdot 317 \cdot 98377 & 2^{-18} \cdot 3^{2} \cdot 5 \cdot 59 \cdot 25640299 & 2^{-18} \cdot 7 \cdot 41 \cdot 157 \cdot 229 \cdot 9437\\
                -449 \cdot 104759 & -2^{-4} \cdot 3 \cdot 29 \cdot 809 & 2^{-10} \cdot 11 \cdot 19 \cdot 31 \cdot 677 & 2^{-14} \cdot 29 \cdot 569 \cdot 4987 & 2^{-20} \cdot 3 \cdot 17 \cdot 1439 \cdot 925741\\
                -2^{4} \cdot 37 \cdot 167 & -2^{-4} \cdot 5^{2} \cdot 631 & -2^{-6} \cdot 3 \cdot 17 \cdot 149 & -2^{-12} \cdot 13 \cdot 30557 & 2^{-13} \cdot 5 \cdot 37 \cdot 239 \cdot 1549\\
                -3^{2} \cdot 17 & -2^{-2} \cdot 67 & -2^{-6} \cdot 7 \cdot 157 & -2^{-8} \cdot 3^{3} \cdot 2087 & -2^{-12} \cdot 19 \cdot 23 \cdot 503 \cdot 659\\
                2^{3} & 2^{-2} \cdot 3 & -2^{-2} \cdot 5 & -2^{-6} \cdot 11 \cdot 1553 & -2^{-4} \cdot 3 \cdot 17 \cdot 199 \cdot 577\\
                1 & 1 & 2^{-2} \cdot 3 \cdot 17 & 2^{-2} \cdot 31 \cdot 233 & -2^{-4} \cdot 631 \cdot 1753\\
                0 & 1 & 2^{2} \cdot 29 & 3^{2} \cdot 5 \cdot 3331 & 2^{3} \cdot 29 \cdot 37 \cdot 83 \cdot 3467\\ 
        \end{tabular}
        \bigskip\par
        \caption{Elliptic net associated to $E: y^2 = x^3 -11$ and the points $Q = (15,58)$ and $P=(3, 4)$.}
        \label{table2}
    \end{table}

\clearpage
    \begin{table}
        \centering
        \fontsize{6pt}{1}\selectfont
        \begin{tabular}{MMMMMMM}
              3^{2} \cdot 5 \cdot 8243 \cdot 7289363 & 59 \cdot 523 \cdot 1170779 & 2803 \cdot 2163467 & 2^{3} \cdot 23 \cdot 7758139 & 59 \cdot 149837011 & 31 \cdot 229 \cdot 32045369 & 3 \cdot 11 \cdot 733 \cdot 154099559\\
              13 \cdot 127 \cdot 3066533 & 2 \cdot 41 \cdot 53 \cdot 26627 & 7 \cdot 13 \cdot 17 \cdot 5653 & 5^{2} \cdot 29 \cdot 67 \cdot 487 & 3 \cdot 13 \cdot 19 \cdot 89 \cdot 1291 & 7 \cdot 109 \cdot 1427 \cdot 2833 & 2^{2} \cdot 13 \cdot 167 \cdot 199 \cdot 617887\\
              5948431 & 181 \cdot 8819 & 3^{2} \cdot 47 \cdot 1097 & 11 \cdot 11779 & 2 \cdot 61 \cdot 74377 & 17 \cdot 25967671 & 5 \cdot 56479 \cdot 333271\\
              3 \cdot 5 \cdot 7 \cdot 1949 & 6553 & 2^{4} \cdot 431 & 7^{2} \cdot 521 & 42181 & 47 \cdot 71 \cdot 14557 & 3 \cdot 7 \cdot 127 \cdot 349 \cdot 32537\\
              2 \cdot 11 \cdot 113 & 911 & 463 & 5 \cdot 557 & 3^{3} \cdot 37 \cdot 137 & 2^{2} \cdot 2059769 & 25084117199\\
              127 & 7 & 3 \cdot 19 & 2 \cdot 199 & 7 \cdot 2039 & 653 \cdot 15767 & 5 \cdot 11 \cdot 293 \cdot 662327\\
              3 \cdot 5 & 2^{3} & 1 & 349 & 53 \cdot 593 & 5624039 & 2 \cdot 3^{2} \cdot 41 \cdot 73 \cdot 661 \cdot 2141\\
              1 & 1 & 7 & 5 \cdot 11 & 2^{2} \cdot 3 \cdot 23 \cdot 107 & 7 \cdot 4812433 & 19 \cdot 127 \cdot 601 \cdot 4637\\
              1 & 1 & 2 \cdot 3 & 601 & 277 \cdot 313 & 1987 \cdot 119321 & 5^{2} \cdot 139843540153\\
              0 & 1 & 13 & 7 \cdot 59 & 13 \cdot 55819 & 2 \cdot 29 \cdot 26272439 & 3 \cdot 7 \cdot 13 \cdot 59 \cdot 263 \cdot 5880307\\ 
        \end{tabular}
        \bigskip\par
        \caption{Elliptic denominator net associated to $E: y^2 +7y = x^3 +x^2 +28x$ and the points $Q = (1,3)$ and $P=(0, 0)$}
        \label{table3}
    \end{table}

    \begin{table}
        \centering
        \fontsize{6pt}{1}\selectfont
        \begin{tabular}{MMMMMMM} 
            -3^{2} \cdot 5 \cdot 7^{20} \cdot 8243 \cdot 7289363 & 7^{20} \cdot 59 \cdot 523 \cdot 1170779 & 7^{20} \cdot 2803 \cdot 2163467 & 2^{3} \cdot 7^{20} \cdot 23 \cdot 7758139 & -7^{20} \cdot 59 \cdot 149837011 & -7^{20} \cdot 31 \cdot 229 \cdot 32045369 & -3 \cdot 7^{20} \cdot 11 \cdot 733 \cdot 154099559\\
            -7^{16} \cdot 13 \cdot 127 \cdot 3066533 & -2 \cdot 7^{16} \cdot 41 \cdot 53 \cdot 26627 & 7^{17} \cdot 13 \cdot 17 \cdot 5653 & 5^{2} \cdot 7^{16} \cdot 29 \cdot 67 \cdot 487 & 3 \cdot 7^{16} \cdot 13 \cdot 19 \cdot 89 \cdot 1291 & -7^{17} \cdot 109 \cdot 1427 \cdot 2833 & -2^{2} \cdot 7^{16} \cdot 13 \cdot 167 \cdot 199 \cdot 617887\\
            7^{12} \cdot 5948431 & -7^{12} \cdot 181 \cdot 8819 & -3^{2} \cdot 7^{12} \cdot 47 \cdot 1097 & 7^{12} \cdot 11 \cdot 11779 & 2 \cdot 7^{12} \cdot 61 \cdot 74377 & 7^{12} \cdot 17 \cdot 25967671 & -5 \cdot 7^{12} \cdot 56479 \cdot 333271\\
            3 \cdot 5 \cdot 7^{10} \cdot 1949 & 7^{9} \cdot 6553 & -2^{4} \cdot 7^{9} \cdot 431 & -7^{11} \cdot 521 & -7^{9} \cdot 42181 & 7^{9} \cdot 47 \cdot 71 \cdot 14557 & 3 \cdot 7^{10} \cdot 127 \cdot 349 \cdot 32537\\
            2 \cdot 7^{6} \cdot 11 \cdot 113 & 7^{6} \cdot 911 & 7^{6} \cdot 463 & -5 \cdot 7^{6} \cdot 557 & -3^{3} \cdot 7^{6} \cdot 37 \cdot 137 & -2^{2} \cdot 7^{6} \cdot 2059769 & 7^{6} \cdot 25084117199\\
            -7^{4} \cdot 127 & 7^{5} & 3 \cdot 7^{4} \cdot 19 & 2 \cdot 7^{4} \cdot 199 & -7^{5} \cdot 2039 & -7^{4} \cdot 653 \cdot 15767 & -5 \cdot 7^{4} \cdot 11 \cdot 293 \cdot 662327\\
            -3 \cdot 5 \cdot 7^{2} & -2^{3} \cdot 7^{2} & -7^{2} & 7^{2} \cdot 349 & 7^{2} \cdot 53 \cdot 593 & -7^{2} \cdot 5624039 & -2 \cdot 3^{2} \cdot 7^{2} \cdot 41 \cdot 73 \cdot 661 \cdot 2141\\
            7 & -7 & -7^{2} & -5 \cdot 7 \cdot 11 & 2^{2} \cdot 3 \cdot 7 \cdot 23 \cdot 107 & 7^{2} \cdot 4812433 & -7 \cdot 19 \cdot 127 \cdot 601 \cdot 4637\\
            1 & 1 & -2 \cdot 3 & -601 & -277 \cdot 313 & 1987 \cdot 119321 & 5^{2} \cdot 139843540153\\
            0 & 1 & 13 & -7 \cdot 59 & -13 \cdot 55819 & -2 \cdot 29 \cdot 26272439 & 3 \cdot 7 \cdot 13 \cdot 59 \cdot 263 \cdot 5880307\\ 
        \end{tabular}
        \bigskip\par
        \caption{Elliptic nets associated to $E: y^{2} +7y = x^{3} +x^{2} +28x$ and the points  $Q = (1,3)$ and $P=(0, 0)$}
        \label{table4}
    \end{table}

\end{landscape}

\newpage

\end{document}